\long\def\@savemarbox#1#2{\global\setbox#1\vtop{\hsize\marginparwidth 
  \@parboxrestore\tiny\raggedright #2}}
\newcommand\lref[1]{\ref{#1}%
\@ifundefined{r@DisplaY #1}{}{ (#1)}}
\newcommand\fakelabel[2]{\@bsphack\if@filesw {\let\thepage\relax
   \newcommand\protect{\noexpand\noexpand\noexpand}%
\xdef\@gtempa{\write\@auxout{\string
      \newlabel{#1}{{#2}{\thepage}}}}}\@gtempa
   \if@nobreak \ifvmode\nobreak\fi\fi\fi\@esphack}
\def\SL@margintext#1{{\showlabelsetlabel{\tiny\{\SL@prlabelname{#1}\}}}}
\newtheorem{sat}{Theorem}[section]		\newtheorem{lem}[sat]{Lemma}
\newtheorem{kor}[sat]{Corollary}			\newtheorem{prop}[sat]{Proposition}
\newtheorem*{defi*}{Definition}			\newtheorem*{bei*}{Example}
\newtheorem*{sat*}{Theorem}				\newtheorem*{kor*}{Corollary}
\newtheorem*{rmk*}{Remark}				\newtheorem*{lem*}{Lemma}
\let\ssection=\section
\renewcommand{\section}{\setcounter{equation}{0}\ssection}
\newtheorem*{namedtheorem}{\theoremname}
\newcommand{\theoremname}{testing}
\newenvironment{named}[1]{\renewcommand{\theoremname}{#1}\begin{namedtheorem}}{\end{namedtheorem}}
\theoremstyle{remark}
\newtheorem*{bem}{Remark}
\newcommand{\BC}{\mathbb C}			\newcommand{\BH}{\mathbb H}
\newcommand{\BR}{\mathbb R}			
\newcommand{\BS}{\mathbb S}			\newcommand{\BZ}{\mathbb Z}
		\newcommand{\CF}{\mathcal F}
		\newcommand{\CH}{\mathcal H}
		\newcommand{\CN}{\mathcal N}
\newcommand{\CQ}{\mathcal Q}
\newcommand{\D}{\partial}
\newcommand{\DD}{\nabla}
\DeclareMathOperator{\Id}{Id}		
\DeclareMathOperator{\vol}{vol}		
\DeclareMathOperator{\inj}{inj}
\DeclareMathOperator{\diam}{diam}
\newcommand{\comment}[1]{}
\DeclareMathOperator{\Mod}{Mod}
\DeclareMathOperator{\ds}{ds}
\DeclareMathOperator{\loc}{loc}
\begin{document}

\title[]{Distributional limits of Riemannian manifolds and graphs with sublinear genus growth}
\author{Hossein Namazi, Pekka Pankka and Juan Souto}
\thanks{Pekka Pankka has been partially supported by project 256228 of the Academy of Finland. Juan Souto has been partially supported by NSERC Discovery and Accelerator Supplement grants.}
\begin{abstract}
In \cite{BS} Benjamini and Schramm introduced the notion of {\em distributional limit} of a sequence of graphs with uniformly bounded valence and studied such limits in the case that the involved graphs are planar. We investigate distributional limits of sequences of Riemannian manifolds with bounded curvature which satisfy a quasi-conformal condition. We then apply our results to somewhat improve Benjamini's and Schramm's original result on the recurrence of the simple random walk on limits of planar graphs. For instance, as an application we give a proof of the fact that for graphs in an expander family, the genus of each graph is bounded from below by a linear function of the number of vertices.
\end{abstract}

\maketitle

\section{Introduction}
For $K\ge 1$ and a closed Riemannian manifold $M$, let $\CQ(M,K)$ be the set of all Riemannian manifolds $M'$ with pinched sectional curvature $\vert\kappa_{M'}\vert\le 1$ such that there is a $K$-quasi-conformal homeomorphism $$f:M'\to M.$$
Suppose that $(M_i)$ is a sequence in $\CQ(M,K)$ and let $\CH$ be the space of all isometry classes of pointed metric spaces endowed with the pointed Gromov-Hausdorf topology. For each $i$ consider the Lebesgue measure $\vol_{M_i}$ on $M_i$ induced by the Riemannian metric, and let $\lambda_{M_i}$ be the push-forward of the probability measure $\frac 1{\vol_{M_i}(M_i)}\vol_{M_i}$ by the continuous map
$$\iota_{M_i}:M_i\to\CH,\ \ \iota_{M_i}(p)=(M_i,p).$$
It follows from our curvature assumption that, up to passing to a subsequence, the sequence of measures $\lambda_{M_i}$ converges in the weak-*-topology to a probability measure on $\CH$. If the sequence $(\lambda_{M_i})$ actually converges, then the limiting measure $\lambda$ is the {\em distributional limit} of the sequence $(M_i)$.

\begin{sat}\label{sat1}
Given $K\ge 1$ and a closed Riemannian manifold $M$ of dimension $d\ge 3$, let $(M_i)\subset\CQ(M,K)$ be a sequence with distributional limit $\lambda$. If $\vol(M_i)\to\infty$, then the set of those $(X,x)\in\CH$ such that $X$ is a Riemannian manifold $K$-quasi-conformally equivalent to $\BR^d$ or $\BR^d\setminus\{0\}$ has full $\lambda$-measure. 
\end{sat}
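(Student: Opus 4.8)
The plan is to realize $\lambda$-typical rooted spaces as pointed Gromov--Hausdorff limits of $(M_i,p_i)$ with $p_i$ distributed according to the normalized volume of $M_i$, to manufacture out of the $K$-quasi-conformal maps $f_i\colon M_i\to M$ a $K$-quasi-conformal map of such a limit $X$ onto a domain $\Omega\subseteq S^d$, and finally to classify $\Omega$ using that $M$ is closed, that $\vol(M_i)\to\infty$, and that $\lambda$ is unimodular.

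For the preliminaries, the bound $\vert\kappa_{M_i}\vert\le1$ together with Cheeger--Gromov compactness gives that every subsequential pointed limit of $(M_i,p_i)$ is a complete length space which is a Riemannian $d$-manifold with $\vert\kappa_X\vert\le1$ wherever the injectivity radius stays bounded below. I would first argue that collapse is $\lambda$-negligible: a short geodesic or thin cross-section of $M_i$ carries a ``collar'' of definite conformal modulus, whose $f_i$-image is a region of $M$ of comparable modulus by quasi-conformality; since $M$ is fixed and compact it admits only boundedly many pairwise disjoint regions of a given modulus, so the thin part of $M_i$ meets only a fraction of its volume tending to $0$. Hence for $\lambda$-a.e.\ $(X,x)$ we may take $X$ to be a complete noncompact Riemannian $d$-manifold with $\vert\kappa_X\vert\le1$, realized as such a limit.

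The heart is a blow-up analysis of $f_i$ at the random base point. Since $f_i$ is a homeomorphism the area formula gives $\int_{M_i}J_{f_i}\,d\vol_{M_i}=\vol(M)$, hence $\int_{M_i}J_{f_i}\,\tfrac{d\vol_{M_i}}{\vol(M_i)}=\vol(M)/\vol(M_i)\to0$: for random $p_i$ the map $f_i$ is, with probability tending to one, infinitesimally strongly contracting at $p_i$. Using that a $K$-quasi-conformal homeomorphism between closed Riemannian manifolds of controlled geometry is uniformly quasisymmetric, the image of a metric ball has volume bounded below in terms of its diameter, so at most $\vol(M)/c(R,\delta,K,d)$ of the sets $f_i(B_{M_i}(p_i,R))$ of diameter $\ge\delta$ can be pairwise disjoint; it follows that for fixed $R$ the $\lambda_{M_i}$-measure of ``$\delta$-large'' base points tends to $0$. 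Choosing $R=R_i\to\infty$ slowly enough, $f_i$ then maps $B_{M_i}(p_i,R_i)$ into a ball $B_M(q_i,r_i)$ of $M$ with $r_i\to0$; rescaling this ball of the fixed smooth manifold $M$ by $1/r_i$ — which produces pointed spaces converging to flat $\BR^d$ — and passing to the limit by normality of uniformly quasisymmetric maps, the $f_i$ converge to a $K$-quasi-conformal homeomorphism $F\colon X\to\Omega$ onto an open connected $\Omega\subseteq\BR^d$. Thus $X$ is $K$-quasi-conformally equivalent to $\Omega$, and after a conformal Möbius change of coordinates of $S^d$ we may discuss $\Omega$ up to Möbius equivalence.

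It remains to show $\Omega$ is $S^d$ minus one or two points. Since $\lambda$ is unimodular, the mass-transport principle applies to $X$; in particular $X$ has $0$, $1$, $2$, or $\infty$ ends, and noncompactness rules out $0$. Each bounded complementary component of $\Omega$ in $S^d$ is an end of $X$ which in the pre-limit is separated from the base point by a quasi-sphere $f_i^{-1}(\partial B_M(\cdot,\rho))$ and hence encloses a region of $M$ of definite conformal size; the packing argument of the previous step then bounds the number of such components, so $\infty$ ends is impossible, and the earlier quasi-conformal results on ends of bounded-geometry manifolds identify the one- and two-ended cases with $\BR^d$ and $\BR^d\setminus\{0\}$, i.e.\ with $S^d$ minus one or two points. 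This last classification is the main obstacle: converting the \emph{global} constraint of being $K$-quasi-conformally equivalent to the fixed closed manifold $M$, together with unimodularity of $\lambda$ and the volume blow-up, into the clean one-or-two-punctures dichotomy — while along the way also justifying the convergence $f_i\to F$ on balls of radius growing with $i$ and disposing of the collapsed limits.
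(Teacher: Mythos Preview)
Your outline matches the paper through the construction of the limiting $K$-quasi-conformal embedding $F\colon X\to\Omega\subset\BR^d$: the injectivity-radius control, the contraction property at random base points, the rescaling of the target, and the normal-family passage to the limit are all essentially as in the paper (which proves the non-collapsing as a separate theorem via the Gromov--Zorich global homeomorphism theorem, and the contraction via modulus estimates rather than your Jacobian argument, but these are cosmetic differences).

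The genuine gap is in your final step. Knowing that $X$ has one or two ends does \emph{not} force $\Omega$ to be $\BR^d$ or $\BR^d\setminus\{0\}$: for $d\ge 3$ the complement of a closed ball in $\BR^d$ is a one-ended domain of bounded geometry which is not quasi-conformally equivalent to $\BR^d$ (it is $d$-hyperbolic, while $\BR^d$ is $d$-parabolic). More generally, a single complementary continuum of positive diameter gives a one-ended $\Omega$ that is not $\BR^d$ minus a point. Your appeal to unnamed ``earlier quasi-conformal results on ends'' does not supply this, and your packing argument for ruling out infinitely many ends is also suspect: after rescaling, all of the putative complementary components live in an arbitrarily small ball of the fixed manifold $M$, so there is no packing obstruction in $M$ itself.

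What the paper does instead is transport the Benjamini--Schramm ``magic lemma'' on $(\delta,s)$-supported points. One fixes maximal $1$-nets $\CN_i\subset M_i$ and shows that, for random $p_i\in\CN_i$, the image $f_i(p_i)$ is with high probability \emph{not} $(\delta,s)$-supported in $f_i(\CN_i)\subset M$. Passing to the limit, this forces the image $F(\CN)$ of the limiting net to have at most one accumulation point in $\BR^d$; a separate density lemma (quasi-conformal maps send nets to sets that accumulate on every boundary point of the image) then gives $\vert\BR^d\setminus F(X)\vert\le 1$. This is a strictly finer conclusion than any end count, and it is the missing idea in your proposal.
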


We can reformulate the content of Theorem \ref{sat1} by saying that for a sequence $(M_i)\subset\CQ(M,K)$ with $\vol(M_i)\to\infty$, and for randomly chosen base points $p_i\in M_i$, the possible Gromov-Hausdorff limits of subsequences of $(M_i,p_i)$ are quasi-conformally equivalent to $\BR^d$ and $\BR^d\setminus\{0\}$. To appreciate this statement, notice that for each $d\ge 2$ there are sequences $(M_i)\subset\CQ(\BS^d,1)$, and $(p_i)$ with $p_i\in M_i$ such that $(M_i,p_i)$ converges to $(X,x)$ where $X$ has infinite topology (see section \ref{sec:weak}). The statement of Theorem \ref{sat1} is that the choice of this sequence of base points has been extremely biassed. 

Notice also that the condition that the manifolds $(M_i)$ in the sequence are uniformly $K$-quasi-conformal to a fixed manifold is necessary for the theorem to hold. It is easy to use the Whitehead manifold to construct a sequence $(M_i)$ of manifolds diffeomorphic to $\BS^3$, with sectional curvature bounded in absolute value by one, injectivity radius bounded from below, volume growing without bounds, and with distributional limit supported by the set of manifolds with infinite topology (see section \ref{sec:weak}).
\medskip

As always, knowing something about the Gromov-Hausdorff limit of a sequence of Riemannian manifold translates into knowledge about the members of the sequence. For instance, we derive from Theorem \ref{sat1}:

\begin{sat}\label{cheeger}
Fix $K\ge 1$ and a closed Riemannian manifold $M$ of dimension $d\ge 3$. If $(M_i)\subset\CQ(M,K)$ is a sequence such that $\vol(M_i)\to\infty$, then 
$$\lim_{i\to\infty}h(M_i)= 0$$
where $h(M_i)$ is the Cheeger constant of $M_i$.
\end{sat}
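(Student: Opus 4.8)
The plan is to deduce this from Theorem~\ref{sat1} by contradiction, turning the isoperimetric hypothesis into exponential volume growth and then contradicting the ``parabolic'' nature of the limiting manifolds. Suppose $h(M_i)\not\to 0$; after passing to a subsequence we may assume that $h(M_i)\ge\varepsilon$ for some $\varepsilon>0$ and all $i$, and (after a further subsequence) that $(M_i)$ has a distributional limit $\lambda$. By Theorem~\ref{sat1}, for $\lambda$-almost every $(X,x)$ the space $X$ is a complete Riemannian $d$-manifold with $|\kappa_X|\le 1$ that is $K$-quasi-conformally equivalent to $\BR^d$ or to $\BR^d\setminus\{0\}$; the goal is to contradict this by showing that $\lambda$-almost every $X$ has exponential volume growth.

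\emph{Step 1: a Cheeger bound forces exponential growth of metric balls.} For $p\in M_i$ put $V_i(p,r)=\vol_{M_i}(B(p,r))$. For almost every $r$ the metric sphere $\partial B(p,r)$ separates $B(p,r)$ from its complement, so the coarea formula for $d(p,\cdot)$ and the Cheeger inequality give $\tfrac{d}{dr}V_i(p,r)=\vol_{d-1}(\partial B(p,r))\ge\varepsilon\,V_i(p,r)$ whenever $V_i(p,r)\le\tfrac12\vol(M_i)$; integrating, $V_i(p,r)\ge V_i(p,1)\,e^{\varepsilon(r-1)}$ for all such $r\ge 1$. By Bishop--Gromov ($\kappa_{M_i}\ge-1$) one has $V_i(p,r)\le C_d\,e^{(d-1)r}$, so the constraint $V_i(p,r)\le\tfrac12\vol(M_i)$ holds for all $r$ up to $\rho_i:=\tfrac1{d-1}\log\bigl(\tfrac{\vol(M_i)}{2C_d}\bigr)$, and $\rho_i\to\infty$ because $\vol(M_i)\to\infty$. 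Finally, if $\inj_{M_i}(p)\ge\rho_0$ then Günther's inequality ($\kappa_{M_i}\le 1$) yields $V_i(p,1)\ge v_d(\rho_0)>0$ with $v_d(\rho_0)$ depending only on $d$ and $\rho_0$.

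\emph{Step 2: passing to the limit.} Since by Theorem~\ref{sat1} almost every $X$ is a smooth $d$-manifold, and since $\inj_{M_i}(p_i)\to 0$ along a sequence with $\iota_{M_i}(p_i)\to(X,x)$ would collapse the limit to a space of lower dimension, the measure $\lambda$ is exhausted by the pieces concentrated on $\{(X,x):\inj_X(x)\ge 2\rho_0\}$, $\rho_0>0$, and it suffices to treat each such piece. On it the curvature bound and the non-collapsing at the root give $C^{1,\alpha}$-convergence of the pointed balls $B(p_i,r)$, hence $V_i(p_i,r)\to\vol_X(B_X(x,r))$ at every continuity radius $r$, while $\inj_{M_i}(p_i)\ge\rho_0$ for $i$ large. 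Combining this with Step~1 and letting $i\to\infty$ and then $r\to\infty$ we obtain $\vol_X(B_X(x,r))\ge v_d(\rho_0)\,e^{\varepsilon(r-1)}$ for all $r\ge 1$, so that $\liminf_{r\to\infty}\tfrac1r\log\vol_X(B_X(x,r))\ge\varepsilon>0$ for $\lambda$-almost every $(X,x)$.

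\emph{Step 3, and the main obstacle.} It remains to prove the complementary geometric statement: a complete Riemannian $d$-manifold $X$ with $|\kappa_X|\le 1$ that is $K$-quasi-conformally equivalent to $\BR^d$ or to $\BR^d\setminus\{0\}$ has subexponential volume growth, which contradicts Step~2. The mechanism is that the $d$-capacity of a compact set relative to infinity --- equivalently, the $d$-modulus of the family of curves joining it to infinity --- is distorted by quasi-conformal maps only by a bounded factor, so $X$ inherits the $d$-parabolicity of $\BR^d$; since the origin is a $d$-polar point, the same applies to $\BR^d\setminus\{0\}$. Under the bounded local geometry coming from $|\kappa_X|\le 1$ a $d$-parabolic manifold can have no end of exponential volume growth, since such an end would be $d$-hyperbolic, as is $\BH^d$; hence $\vol_X(B_X(x,r))=e^{o(r)}$. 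I expect the bulk of the work to lie exactly here --- making the quasi-conformal invariance of $d$-parabolicity effective in this Riemannian setting, where the quasi-conformal hypothesis must compensate for the absence of a global injectivity-radius bound on $X$, and then deducing the growth estimate from $d$-parabolicity --- and to draw on the quasi-conformal analysis developed for Theorem~\ref{sat1}. With this in hand, the exponential growth of $\lambda$-almost every $X$ contradicts Theorem~\ref{sat1}, proving $h(M_i)\to 0$.
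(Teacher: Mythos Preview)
Your overall strategy---argue by contradiction, pass to a distributional limit, and exploit the $d$-parabolicity of $\BR^d$ and $\BR^d\setminus\{0\}$---is exactly the paper's strategy. But the route you take through volume growth introduces a genuine gap, and the paper's argument is both shorter and avoids it.

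The problem is your Step~3. The implication ``$d$-parabolic with bounded local geometry $\Rightarrow$ subexponential volume growth'' is false. A fattening of the Cayley graph of the lamplighter group $\BZ_2\wr\BZ$ gives a complete Riemannian $d$-manifold with $|\kappa|\le 1$, $\inj>0$, and exponential volume growth; since the lamplighter graph is recurrent (i.e.\ $2$-parabolic), and on a bounded-degree graph $|\DD u(e)|\le 1$ forces $\mathrm{cap}_p\le\mathrm{cap}_2$ for every $p\ge 2$, this manifold is $d$-parabolic for every $d\ge 2$ by Kanai--Holopainen. So the mechanism you sketch (``an end of exponential growth would be $d$-hyperbolic, as is $\BH^d$'') does not work: $\BH^d$ is $d$-hyperbolic because $h(\BH^d)>0$, not merely because of its volume growth. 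Your Steps~1--2, which convert $h(M_i)\ge\varepsilon$ into exponential growth of the limit, therefore throw away precisely the information you need.

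The paper's fix is to keep the Cheeger constant rather than trade it for volume growth. From $d$-parabolicity plus bounded geometry one gets directly $h(X)=0$ (this is Proposition~\ref{par-che}, citing Holopainen: a positive Cheeger constant gives a linear isoperimetric inequality, hence positive $d$-capacity of any ball). Then one chooses a bounded domain $\Omega\subset X$ with $\mathrm{Area}(\partial\Omega)\le\tfrac{\varepsilon}{2}\vol(\Omega)$ and pushes it into $M_i$ via the $C^{1,\alpha}$ almost-isometries, contradicting $h(M_i)\ge\varepsilon$. Note also that your worry about ``the absence of a global injectivity-radius bound on $X$'' is misplaced: Theorem~\ref{no-collapse} already gives $\inj(M_i)\ge\epsilon$ uniformly (since $\vol(M_i)\to\infty$ forces $\diam(M_i)\ge 1$ eventually), and this passes to $X$. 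With that bound in hand, the whole proof is a short paragraph.
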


Recall that the Cheeger constant of a possibly non-compact Riemannian manifold $N$ is 
$$h(N)=\inf\frac{\mathrm{Area}(\D A)}{\min\{\vol(A),\vol(B)\}}$$
where the infimum is taken over all decompositions $M=A\cup B$ where $A$ and $B$ have disjoint interior and $\vol(A)<\infty$. It follows from Theorem \ref{cheeger} and the work of Buser \cite{Buser} that for sequences $(M_i)\subset\CQ(M,K)$ with $\vol(M_i)\to\infty$ we also have $\lambda_1(M_i)\to 0$ where $\lambda_1(\cdot)$ is the first eigenvalue of the Laplacian.
\medskip

Returning to the setting of Theorem \ref{sat1}, recall the assumption that the involved manifolds have at least dimension $3$. In fact, Theorem \ref{sat1} fails in dimension 2. For instance, there are sequences $(M_i)$ of Riemannian surfaces conformally equivalent to $\BS^2$ whose distributional limit $\lambda$ satisfies
$$\lambda\left(\{(X,x)\in\CH\vert\ X\ \hbox{has infinite topological type}\}\right)>0.$$
We discuss this example in section \ref{sec:2dim}.

The main difference between the 2-dimensional and the higher dimensional case is that in the latter local collapse cannot occur. More concretely, we use the Gromov-Zorich {\em global homeomorphism theorem} \cite{Gromov-book,Z2} to prove:

\begin{sat}\label{no-collapse}
Let $M$ be a closed Riemannian manifold of dimension $d\ge 3$. For every $K\ge 1$ there is $\epsilon$ positive such that every $M'\in\CQ(M,K)$ with diameter $\diam(M')\ge 1$ has injectivity radius $\inj(M')\ge\epsilon$.
\end{sat}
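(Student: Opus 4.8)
The plan is to argue by contradiction using a compactness/limiting argument, exactly the kind of reasoning one expects for statements of the form "diameter bounded below forces injectivity radius bounded below under a curvature bound." Suppose the conclusion fails for some fixed $K$ and $M$: then there is a sequence $M_i' \in \CQ(M,K)$ with $\diam(M_i') \ge 1$ but $\inj(M_i') \to 0$. Pick points $p_i \in M_i'$ realizing (or nearly realizing) the minimal injectivity radius, so $\inj_{M_i'}(p_i) = \epsilon_i \to 0$. Rescale the metrics by $1/\epsilon_i$; since $|\kappa_{M_i'}| \le 1$, the rescaled manifolds have curvature tending to $0$, injectivity radius exactly $1$ at $p_i$, and — because the original diameter was at least $1$ — rescaled diameter tending to $\infty$. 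By the standard Cheeger-Gromov compactness theorem, a subsequence of $(M_i', p_i)$ with the rescaled metrics converges in the pointed $C^{1,\alpha}$ (Gromov-Hausdorff) sense to a complete pointed Riemannian manifold $(X, x)$ which is flat and noncompact, with $\inj_X(x) = 1$. In particular $X$ is \emph{not} isometric to $\BR^d$.

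The next step is to upgrade the curvature control to quasiconformal control on the limit. Each $M_i'$ admits a $K$-quasiconformal homeomorphism $f_i : M_i' \to M$; after rescaling the domain metric the dilatation is unchanged, so we still have $K$-quasiconformal maps from the rescaled $M_i'$ to the fixed closed manifold $M$ (with its fixed metric). The idea is that, in the limit, these maps produce a nonconstant quasiregular map from the flat manifold $X$ to $M$. More precisely, one should argue — using that $X$ is a pointed Gromov-Hausdorff limit of the rescaled $M_i'$, that the $M_i'$ all carry $K$-quasiconformal maps onto a \emph{fixed} target $M$, and standard normal-family/compactness properties of quasiregular maps (e.g. local equicontinuity after a suitable normalization, passing to a limit on larger and larger balls) — that there is a nonconstant $K$-quasiregular map $F : X \to M$. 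Here one has to be a little careful: a priori the restrictions $f_i|_{B(p_i, R)}$ might degenerate, and one wants to normalize (for instance by composing with isometries of $M$, or by rescaling in the target, which changes nothing since $M$ is fixed and compact) so that the limit map is nonconstant; the hypothesis $\diam(M_i') \ge 1$ (hence rescaled diameter $\to \infty$) is what guarantees there is enough room for $F$ to be defined on all of $X$ rather than a small ball. This is the step I expect to be the main obstacle — making the passage to the limit of the quasiconformal maps rigorous and extracting a \emph{nonconstant} quasiregular map on the noncompact limit.

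Granting a nonconstant $K$-quasiregular map $F : X \to M$ from the complete flat noncompact $d$-manifold $X$, $d \ge 3$, I would then invoke the Gromov-Zorich global homeomorphism theorem: a quasiregular map from a complete flat (or more generally parabolic) Riemannian $n$-manifold, $n \ge 3$, that omits enough of the target is either injective, and in fact such maps are subject to strong Liouville-type rigidity. The cleanest route: since $X$ is complete, flat, noncompact and simply connected after passing to the universal cover $\tilde X$ (which is isometric to $\BR^d$), and quasiregular maps lift, we obtain a nonconstant $K$-quasiregular map $\BR^d \to M$ with the target being a closed manifold, equivalently a nonconstant $K$-quasiregular map $\BR^d \to M$; Zorich's theorem then forces it to be a covering-type/locally injective map, and combined with the fact that $M$ is compact while $\BR^d$ is not, one reaches a contradiction with the curvature-forced conclusion that $X$ (equivalently $\tilde X \cong \BR^d$) carries such a map only if $\inj_X(x)$ could be pushed to infinity — i.e. $X$ must actually be $\BR^d$ itself, contradicting $\inj_X(x) = 1$ being the minimum on a noncompact flat manifold that is not $\BR^d$. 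The contradiction proves the theorem; the only quantitative output needed is the existence of \emph{some} $\epsilon = \epsilon(M,K) > 0$, which the contradiction-plus-compactness scheme delivers automatically without any explicit estimate.
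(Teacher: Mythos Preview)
Your overall architecture---contradiction, rescaling to a noncompact flat limit $X$ with a short geodesic loop (hence $X$ not simply connected), then using the $K$-quasiconformal homeomorphisms $f_i$ together with Gromov--Zorich to force $X=\BR^d$---is exactly the paper's strategy. Two points, however, are not in order.

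First, the step you flag as ``the main obstacle'' is genuinely where the work lies, and your sketch does not resolve it. The paper extracts from the $f_i$ a $K$-quasiconformal \emph{embedding} of $X$ either into $M$ \emph{or into $\BR^d$}, and the second target is essential: if the family $h_i=f_i\circ\psi_i\colon\Omega_i\to M$ is equicontinuous but the limit is constant, or if the family is not equicontinuous at all (so $M\setminus h_i(B(p,1,X))$ shrinks to a point), one cannot get a nonconstant map into the fixed compact $M$ directly. In both degenerate cases the paper renormalizes---via the exponential chart at the limiting point, or by playing the $h_i$ against each other---and composes with conformal automorphisms of $\BR^d$ to obtain an embedding $X\hookrightarrow\BR^d$ instead. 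Your proposal only contemplates a map into $M$ and a vague normalization ``by isometries of $M$'', which cannot repair a limit that collapses to a point.

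Second, your endgame is not a valid contradiction as written. A nonconstant quasiregular map $\BR^d\to M$ with $M$ closed is \emph{not} impossible: the universal covering of any flat torus is such a map. What the paper uses (its Proposition~\ref{prop:Z}) is that the limit $F\colon X\to M$ (or $X\to\BR^d$) is an \emph{embedding}; composing with the cover $\BR^d\to X$ gives a quasiconformal local homeomorphism $\BR^d\to M$, which lifts to the simply connected $\tilde M$, where Gromov--Zorich makes it an embedding; then an image analysis (the paper's Lemma~\ref{lemma:image}) forces either $\tilde M\simeq\BS^d$ and $X=\BR^d$, or the lift to be a homeomorphism onto $\tilde M$, which again forces $X=\BR^d$ since $F$ was injective. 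The contradiction is with $\pi_1(X)\ne 1$, not with compactness of $M$ versus noncompactness of $\BR^d$.
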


Basically, the $\diam(M')\ge 1$ assumption in Theorem \ref{no-collapse} rules out that $M$ and $M'$ are homothetic flat tori with $\vol_{M'}(M')$ very small. In fact, the condition $\diam(M')\ge 1$ is superfluous if there is no flat manifold which is $K$-quasi-conformally equivalent to $M$. Notice also that Theorem \ref{no-collapse} implies that the subset of $\CQ(M,K)$ consisting of manifolds with volume at most $V$ is compact. This justifies the assumption in Theorem \ref{sat1} and Theorem \ref{cheeger} that the volume of the involved manifolds grows without bounds.
\medskip

As we mentioned above, the failure of Theorem \ref{sat1} for surfaces is related to the lack of uniform bounds for the injectivity radius. In fact, Theorem \ref{sat1} holds in the 2-dimensional situation as long as we assume such a uniform bound. Moreover, in that situation we can use the uniformization theorem to prove a version of Theorem \ref{sat1} for sequences of surfaces with slow genus growth:

\begin{sat}\label{sat12}
Suppose that $(M_i)$ is a sequence of closed Riemannian surfaces with 
$$\vert\kappa_{M_i}\vert\le 1\ \hbox{and}\ \inj(M_i)>\epsilon>0$$ 
for all $i$. Suppose also that $(M_i)$ has distributional limit $\lambda$ and that 
$$\lim_{i\to\infty}\frac{g(M_i)+1}{\vol_{M_i}(M_i)}= 0$$
where $g(M_i)$ is the genus of $M_i$. Then $\lambda$ is supported by the set of Riemannian surfaces conformally equivalent to $\BC$ or $\BC^*$.
\end{sat}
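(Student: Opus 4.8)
The plan is to show that $\lambda$-almost every $(X,x)\in\CH$ is a non-compact, genus-zero, parabolic Riemann surface with at most two ends, and then to read off the conformal type from uniformization. First I would pin down the a.e.\ structure of the limit. Since $|\kappa_{M_i}|\le 1$ and $\inj(M_i)>\epsilon$, Cheeger--Gromov compactness gives smooth convergence of the metrics on balls, so $\lambda$-a.e.\ $X$ is a complete Riemannian (hence Riemann) surface with $|\kappa_X|\le1$ and $\inj(X)\ge\epsilon$; in particular $X$ has bounded geometry, so balls of a fixed radius have volume bounded above and below by the curvature and injectivity-radius hypotheses. Because $g(M_i)+1\ge1$, the assumption $\frac{g(M_i)+1}{\vol_{M_i}(M_i)}\to0$ forces $\vol_{M_i}(M_i)\to\infty$, and the standard argument for distributional limits then shows $\lambda$-a.e.\ $X$ is non-compact, hence an open Riemann surface to which uniformization applies.

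Second, $\lambda$-a.e.\ $X$ has genus $0$. If not, there are $R<\infty$ and $\delta>0$ with $\lambda(\{(X,x):B_X(x,R)\text{ is non-planar}\})\ge\delta$, so for large $i$ the set of $p\in M_i$ with $B_{M_i}(p,R)$ non-planar has volume $\ge\tfrac\delta2\vol_{M_i}(M_i)$; a maximal $2R$-separated net in this set has $\gtrsim_R\vol_{M_i}(M_i)$ points by the two-sided volume bounds, and around each we obtain a positive-genus subsurface of $M_i$, all mutually disjoint, whence $g(M_i)\gtrsim_R\vol_{M_i}(M_i)$, contradicting sublinear genus growth. Thus $\lambda$-a.e.\ $X$ is planar, i.e.\ conformally a subdomain of $\widehat{\BC}$.

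Third --- and this is the heart of the matter --- $\lambda$-a.e.\ $X$ is parabolic and has at most two ends. Parabolicity is the two-dimensional Riemannian analogue of the Benjamini--Schramm recurrence theorem, with the sublinear genus assumption replacing planarity: one approximates each $M_i$ by a bounded-degree graph (the nerve of an $\epsilon$-net), which embeds in $M_i$ and therefore has genus $\le g(M_i)$ while its number of vertices is $\asymp\vol_{M_i}(M_i)$, so the genus is sublinear in the number of vertices; one runs the Benjamini--Schramm circle-packing argument after planarizing the sublinear-size genus-carrying part, and transfers recurrence of the limiting graph back to parabolicity of $X$ via rough-isometry invariance of recurrence (Kanai). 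Alternatively one argues directly on the genus-zero approximating regions via extremal-length/conformal-modulus estimates. Granting parabolicity, $X$ is recurrent, hence amenable, and an amenable unimodular random surface has at most two ends almost surely; since $X$ and its $\epsilon$-net graph have the same ends, $\lambda$-a.e.\ $X$ has at most two ends.

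Finally I would combine these facts: $\lambda$-a.e.\ $X$ is a non-compact, genus-zero Riemann surface with one or two ends, hence topologically $\BR^2$ or $S^1\times\BR$. By uniformization the conformal type in the first case is $\BC$ or $\BD$, and in the second $\BC^*$, $\BD^*=\{0<|z|<1\}$, or a finite-modulus annulus $\{1<|z|<\rho\}$; since $\BD$, $\BD^*$ and the finite-modulus annuli all carry a Green's function (their Brownian motion is transient), parabolicity leaves only $\BC$ and $\BC^*$, so $\lambda$ is supported on surfaces conformally equivalent to $\BC$ or $\BC^*$. The main obstacle is the third step --- establishing recurrence/parabolicity of the distributional limit --- where the curvature, injectivity-radius and sublinear-genus hypotheses are exactly what is needed to import the Benjamini--Schramm machinery into this setting, and where one must also invoke unimodularity of the limit to cut the number of ends down to two.
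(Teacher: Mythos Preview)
Your outline is genuinely different from the paper's argument, and the third step contains a real gap.

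The paper does \emph{not} establish genus zero, parabolicity, and an end bound separately and then appeal to uniformization of the limit. Instead it constructs a conformal embedding $F\colon X\to\BC$ directly. For each $i$ one takes the uniformizing map $f_i\colon M_i\to\Sigma_i$ onto a constant-curvature surface; Gauss--Bonnet gives $\vol(\Sigma_i)=4\pi(g(M_i)-1)=o(\vol(M_i))$, so Proposition~\ref{prop:contraction} (the contraction estimate) shows that for most net points the image of a large ball has tiny diameter in $\Sigma_i$. A uniform atlas on $\Sigma_i$ with bounded multiplicity (Lemma~\ref{covering-surface}) together with the atlas form of the Benjamini--Schramm lemma (Lemma~\ref{bs2}) then replaces Lemma~\ref{bs1}. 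Passing to the limit with Corollary~\ref{cor:qc_limit} produces a conformal embedding $F\colon X\to\BC$, and the Benjamini--Schramm condition forces $F(\CN)$ to have at most one accumulation point, hence $\BC\setminus F(X)$ has at most one point. Genus zero, parabolicity, and the end count all fall out at once.

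Your step~3 is where the problem lies. You propose to prove parabolicity by passing to the nerve graph of an $\epsilon$-net, noting it has sublinear genus, ``planarizing the sublinear-size genus-carrying part,'' and then invoking Benjamini--Schramm plus Kanai. But making a genus-$g$ bounded-degree graph planar by deleting $o(n)$ vertices or edges is not automatic: the $2g$ cutting curves in a genus-$g$ surface need not be short relative to the net, and no separator or crossing-number argument gives this directly. In fact the paper runs the logic in the \emph{opposite} direction: Theorem~\ref{genusgbs} (recurrence for distributional limits of graphs with sublinear genus) is deduced \emph{from} Theorem~\ref{sat12}, not used to prove it. So as written your argument is circular relative to the paper's development, and you would need an independent proof of the graph statement---which is essentially the same difficulty repackaged. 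Your step~4 (at most two ends via unimodularity) is defensible but imports heavier machinery than the paper needs; once the conformal embedding $F$ is in hand, the end bound is immediate from $|\BC\setminus F(X)|\le 1$.

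Your step~2 (the genus-zero counting argument) is correct and pleasant, but the paper never needs it: the conformal embedding into $\BC$ yields planarity for free.
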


Armed with Theorem \ref{sat12} we can study sequences of graphs of uniformly bounded valence. The connection between graphs and metrics is the observation that to each triangulation of a surface we can associate a Riemannian metric in a more or less canonical way (see Lemma \ref{trian-metric} for details); a related construction has been exploited by Gill-Rohde in \cite{Gill-Rohde} to study weak limits of certain disk triangulations. 

The {\em genus} $g(G)$ of a graph $G$ is the minimal genus of an orientable surface into which $G$ can be embedded. We say that a sequence $(G_i)$ has {\em sublinear genus growth} if 
$$\lim_{i\to\infty}\frac{g(G_i)}{\vert G_i\vert}=0$$
where $\vert G_i\vert$ is the number of vertices of $G_i$. 

\begin{sat}\label{genusgbs}
Let $(G_i)$ be a sequence of graphs with uniformly bounded valence, with $\vert G_i\vert\to\infty$, and with sublinear genus growth. If $(G_i)$ converges in distribution to $\lambda$, then $\lambda$ is supported by the set of 2-parabolic rooted graphs $(G,p)$. In particular, $\lambda$ is supported by rooted graphs with vanishing Cheeger constant $h(G)=0$ and recurrent simple random walk.
\end{sat}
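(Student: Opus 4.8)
The plan is to deduce this from Theorem~\ref{sat12} by attaching to each $G_i$ a triangulated Riemannian surface and reading off the conclusion from the limit. Let $D$ bound the valences and put $n_i=\vert G_i\vert$, so $G_i$ has at most $Dn_i/2$ edges. First I would fix a cellular embedding of $G_i$ in a closed orientable surface $\Sigma_i$ of genus $g(G_i)$ (minimal genus embeddings are cellular) and triangulate its faces. The faces may have large degree, but their degrees sum to $2\vert E(G_i)\vert\le Dn_i$; so replacing each face by a balanced triangulation of uniformly bounded valence --- adding interior vertices only, and no chords between boundary vertices --- costs only $O(Dn_i)$ extra vertices, edges and triangles. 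This gives a triangulation $T_i$ of $\Sigma_i$, of uniformly bounded valence, with $G_i$ a subgraph of its $1$-skeleton $T_i^{(1)}$, with $\vert V(T_i)\vert$, $\vert E(T_i)\vert$ and the number of triangles all comparable to $n_i$, and in particular with $\vert V(G_i)\vert/\vert V(T_i)\vert$ bounded away from $0$. Feeding $T_i$ into Lemma~\ref{trian-metric} then produces a closed Riemannian surface $M_i=(\Sigma_i,\rho_i)$ of genus $g(G_i)$ with $\vert\kappa_{M_i}\vert\le 1$, injectivity radius at least some fixed $\epsilon_0>0$, volume comparable to the number of triangles of $T_i$, and in which $T_i^{(1)}$ is a uniform net carrying roughly one vertex per unit volume.

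Next I would feed the $M_i$ into Theorem~\ref{sat12}. The curvature and injectivity bounds hold by construction, and since $n_i\to\infty$ and $g(G_i)/n_i\to 0$,
\[
\frac{g(M_i)+1}{\vol_{M_i}(M_i)}=\frac{g(G_i)+1}{\vol_{M_i}(M_i)}\asymp\frac{g(G_i)+1}{n_i}\longrightarrow 0 .
\]
After passing to a subsequence so that $(M_i)$ has a distributional limit $\lambda'$ --- the graph distributional limit remains $\lambda$ along any subsequence --- and refining once more so that the triangulations $T_i$ together with a $\vol_{M_i}$-random point $x_i$ converge as well, Theorem~\ref{sat12} shows that $\lambda'$-almost every limit surface is conformally equivalent to $\BC$ or $\BC^*$. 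Since $2$-parabolicity of a Riemannian surface is a conformal invariant and both $\BC$ and $\BC^*$ are $2$-parabolic, $\lambda'$-almost every limit surface is $2$-parabolic and comes equipped with a bounded-geometry limit triangulation whose $1$-skeleton is a uniform net in it.

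To pass from the surfaces to the graphs I would invoke the discretization, or rough-isometry invariance, of $2$-parabolicity: for a bounded-geometry Riemannian surface and a uniform net inside it, the surface is $2$-parabolic if and only if the net is a $2$-parabolic graph. Applied to the limit surfaces and their limit triangulations, this makes the $1$-skeleta of the latter $2$-parabolic. Since $T_i^{(1)}$ carries roughly one vertex per unit volume, the law of $(T_i^{(1)},q_i)$ with $q_i$ a uniformly random vertex is mutually absolutely continuous, with densities bounded above and below, with the ``nearest-vertex'' push-forward of the law of $(M_i,x_i)$; hence every subsequential limit of the former is dominated by a bounded multiple of a limit of the latter and so is supported on $2$-parabolic rooted graphs. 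Finally, $\lambda$ is the limit of the laws of $(G_i,p_i)$ with $p_i$ a uniformly random vertex of $G_i$; since $\vert V(G_i)\vert/\vert V(T_i)\vert$ is bounded below, the law of $(T_i^{(1)},p_i)$ is in turn dominated by a bounded multiple of that of $(T_i^{(1)},q_i)$, so carrying along the inclusion $G_i\subseteq T_i^{(1)}$ shows that $\lambda$-almost every $(G,p)$ is a connected subgraph of a $2$-parabolic graph. Rayleigh monotonicity --- deleting edges and vertices only increases the effective resistance to infinity --- then forces $(G,p)$ itself to be $2$-parabolic, so $\lambda$ is supported on $2$-parabolic rooted graphs. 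The ``in particular'' is then standard: a $2$-parabolic graph has recurrent simple random walk, while $h(G)>0$ gives a linear isoperimetric inequality and hence transience, so $h(G)=0$ $\lambda$-almost surely.

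The step I expect to be the main obstacle is this transfer: matching the two notions of random base point --- a uniformly random vertex versus a point sampled from the Riemannian volume --- and, above all, establishing the discretization principle that $2$-parabolicity passes between a bounded-geometry Riemannian surface and a net sitting inside it. That is where a careful comparison of continuous and discrete $2$-capacities, in the spirit of Kanai's results on rough-isometry invariance, must be carried out; already the construction of the bounded-valence triangulation $T_i$ out of the merely valence-bounded graph $G_i$ needs the balanced-subdivision trick rather than a naive cone-off over each face.
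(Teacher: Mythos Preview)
Your strategy is the same as the paper's --- embed in a minimal-genus surface, extend to a bounded-valence triangulation, apply Lemma~\ref{trian-metric} and Theorem~\ref{sat12}, then use the Kanai--Holopainen theorem to transfer $2$-parabolicity from the limit surface to the limit triangulation, and finally pass to the subgraph $G$ by monotonicity. The one substantive difference is your triangulation step. You add interior vertices to each face via a balanced subdivision, which forces $V(G_i)\subsetneq V(T_i)$ and then obliges you to compare the uniform measure on $V(G_i)$ with that on $V(T_i)$ and with the Riemannian volume. The paper instead uses Lemma~\ref{fill}: since $g(G_i)=g(M_i)$, the complementary faces are disks or annuli, and each can be triangulated \emph{without adding any vertices} and with valence multiplied by at most~$6$. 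With $V(T_i)=V(G_i)$ the random root is literally the same object in both pictures, and all your measure-comparison bookkeeping disappears. What you flag as the ``main obstacle'' --- passing $2$-parabolicity between a bounded-geometry surface and a net in it --- is exactly what the paper invokes as the Kanai--Holopainen theorem, so you have correctly located the analytic input; it is not something you need to reprove.
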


As is the case for manifolds, knowing something about the possible limits of the sequence $(G_i)$ allows us to derive information about the members of the sequence. For example, recall that an {\em expander} is a sequence $(G_i)$ of graphs with uniformly bounded valence such that 
$$\vert G_i\vert\to\infty\ \ \hbox{but}\ \ \liminf_{i\to\infty} h(G_i)>0.$$ 
It follows from \cite[Theorem 4]{sep2} that every expander has linear genus growth. From Theorem \ref{genusgbs} we obtain a new proof of this fact:

\begin{kor}\label{expanders}
For every expander $(G_i)$ there is a positive constant $c>0$ with $g(G_i)\ge c\vert G_i\vert$ for all $i$.
\end{kor}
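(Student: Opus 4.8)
The plan is to argue by contradiction, extracting from a hypothetical expander with $g(G_i)=o(|G_i|)$ a distributional limit that violates the conclusion of Theorem \ref{genusgbs}. Suppose that $(G_i)$ is an expander, so $|G_i|\to\infty$ and $\liminf_i h(G_i)=:c_0>0$, yet there is no constant $c>0$ with $g(G_i)\ge c|G_i|$ for all $i$. Then along a subsequence we have $g(G_{i_j})/|G_{i_j}|\to 0$, i.e. the subsequence has sublinear genus growth. Passing to a further subsequence if necessary (using the standard compactness of the space of rooted graphs of bounded valence in the Benjamini--Schramm topology), we may assume this subsequence converges in distribution to some measure $\lambda$. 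Relabel and call this sequence $(G_i)$ again; it still has uniformly bounded valence, $|G_i|\to\infty$, sublinear genus growth, and $h(G_i)\ge c_0/2>0$ for all large $i$.

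Now apply Theorem \ref{genusgbs}: the limiting measure $\lambda$ is supported on $2$-parabolic rooted graphs, and in particular $\lambda$ is supported on rooted graphs with vanishing Cheeger constant. The contradiction will come from showing that a uniform lower bound on the Cheeger constants $h(G_i)$ forces the Cheeger constant of $\lambda$-almost every limit to be bounded below as well, which is incompatible with $h(G)=0$ $\lambda$-almost surely. Concretely, one shows that the Cheeger constant is, in an appropriate sense, lower semi-continuous under Benjamini--Schramm convergence when combined with the uniform valence bound: if $(G,p)$ is a rooted graph in the support of $\lambda$, then a finite subset $A\subset G$ with small boundary-to-volume ratio $|\partial A|/|A|<c_0/4$ can be "transplanted" into $G_i$ for $i$ large, because the $r$-balls in $G_i$ around a positive-density set of roots converge to the $r$-ball around $p$ in $(G,p)$; this produces a finite subset of $G_i$ with boundary-to-volume ratio less than $c_0/4$, contradicting $h(G_i)\ge c_0/2$.

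I expect the main obstacle to be exactly this last point: the Cheeger constant of an infinite graph is defined as an infimum over \emph{all} finite subsets, and realizing a near-optimal finite subset inside the finite graphs $G_i$ requires care, since Benjamini--Schramm convergence only controls balls of bounded radius around a positive fraction of vertices, not the global structure. The cleanest route is probably to use the well-known equivalence (for bounded-valence graphs) between a positive Cheeger constant, non-amenability, and the failure of the Følner condition, together with the fact that Benjamini--Schramm limits of finite graphs are always \emph{unimodular}; a unimodular random graph that is almost surely non-amenable cannot arise as the distributional limit of finite graphs, since finite graphs are trivially Følner. Thus the uniform bound $h(G_i)\ge c_0/2$ makes the limit $\lambda$ supported on non-amenable (in particular non-$2$-parabolic, non-recurrent) graphs, directly contradicting Theorem \ref{genusgbs}. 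Either formulation yields the corollary; the unimodularity/Følner argument is likely the more robust and is the one I would write up in detail.
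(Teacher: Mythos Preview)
Your proposal is essentially correct, and the ``transplanting'' argument you sketch first is exactly what the paper does. Your worry about this step is overstated: from Theorem \ref{genusgbs} one obtains a \emph{specific} sequence of base points $p_i\in G_i$ so that $(G_i,p_i)$ converges in the pointed Gromov--Hausdorff topology to some $(G,p)$ with $h(G)=0$. Any finite $A\subset V(G)$ with $|\partial A|\le\frac{\epsilon}{2}|A|$ sits inside a ball $B(p,R,G)$ for some $R$, and pointed convergence gives, for all large $i$, an isomorphism between $B(p,R+1,G)$ and $B(p_i,R+1,G_i)$; the image of $A$ then witnesses $h(G_i)\le\frac{\epsilon}{2}$, contradicting the expander hypothesis. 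This is a two-line argument once stated, and is precisely the paper's proof.

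Your proposed alternative via unimodularity, however, is wrong as stated. The claim that ``a unimodular random graph that is almost surely non-amenable cannot arise as the distributional limit of finite graphs'' is false: for $d\ge 3$ the $d$-regular tree is non-amenable, is (trivially) unimodular, and is the Benjamini--Schramm limit of random $d$-regular graphs --- which are themselves expanders. Expanders are the \emph{canonical} source of non-amenable Benjamini--Schramm limits, so this route is circular. The phrase ``finite graphs are trivially F{\o}lner'' conflates two different Cheeger constants: the one for finite graphs uses $\min\{|A|,|V\setminus A|\}$ in the denominator, so a uniform lower bound on $h(G_i)$ says nothing about the existence of F{\o}lner sets in the usual sense. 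Drop this alternative and write up the transplant argument; it is both correct and short.
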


We describe briefly the strategy of the proof of our main result, Theorem \ref{sat1}. By Theorem \ref{no-collapse} we may assume without loss of generality that all the manifolds $(M_i)$ have injectivity radius at least $10$. Fix a $K$-quasi-conformal map $f_i:M_i\to M$ for all $i$ and choose a maximal 1-net $\CN_i\subset M_i$. We observe that for all $R>0$ and for most $p_i\in\CN_i$ the image $f_i(B(p_i,R,M_i))$ of the ball around $p_i\in M_i$ of radius $R$ has very small diameter. This implies that the scaled manifolds $(\frac1{\diam_M(f_i(B(p_i,1,M_i)))}M,f(p_i))$ converge in the pointed Gromov-Hausdorff topology to $(\BR^d,0)$. Compactness of quasi-conformal maps implies now that the maps
$$f_i:(M_i,p_i)\to \left(\frac1{\diam_M(f_i(B(p_i,1,M_i)))}M,f(p_i)\right)$$
converge to a quasi-conformal map 
$$F:(X,x)\to(\BR^d,0)$$
where $(X,x)$ is the Gromov-Hausdorff limit of the sequence $(M_i,p_i)$. Denoting by $\CN\subset X$ the limit of the nets $\CN_i\subset M_i$ we observe that either $\BR^d\setminus F(X)$ consists of at most a single point or $F(\CN)$ has at least 2 accumulation points. A variant of a lemma due to Benjamini and Schramm \cite{BS} implies that this last situation does not happen if we take the points $p_i\in\CN_i$ at random.
\medskip

This paper is organized as follows: In sections \ref{sec-preli} and \ref{sec:quasi} we discuss mostly well-known facts on Gromov-Hausdorff limits, distributional limits and quasi-conformal maps. In section \ref{sec:no-collapse} we prove Theorem \ref{no-collapse}. In section \ref{sec:bs} we obtain two rather simpleminded manifold versions of the key lemma in \cite{BS}. Once this is done, we prove Theorem \ref{sat1} in section \ref{sec:sat1} and obtain as a consequence Theorem \ref{cheeger} in section \ref{sec:cheeger}. Theorem \ref{sat12} is proved in section \ref{sec:2dim} and Theorem \ref{genusgbs} and Corollary \ref{expanders} in section \ref{sec:graphs}.
\medskip

\noindent{\bf Acknowledgements.} Hossein Namazi and Pekka Pankka thank the Mathematics Department of the University of British Columbia and the Pacific Institute of Mathematics for their hospitality while this paper was being written. Juan Souto thanks Omer Angel, Asaf Nachmias and Gourab Ray for teaching him about the paper \cite{BS}.

\section{Limits}\label{sec-preli}
In this section we fix some notation and remind the reader of a few well-known facts on Gromov-Hausdorff convergence and distributional limits.
\medskip

Abusing notation, we denote by $B(x,r,X)$ both the open and the closed metric balls of radius $r$ centered at $x$ in a metric space $X=(X,d_X)$. A subset $\CN$ of $X$ is {\em $r$-dense} if $X=\bigcup_{x\in\CN}B(x,r,X)$. An {\em $r$-net} in $X$ is a subset $\CN$ with $d_X(x,y)\ge r$ for all pairwise distinct $x,y\in\CN$. Such an $r$-net is {\em maximal} if it is not properly contained in any other $r$-net. Observe that a maximal $r$-net is also an $r$-dense subset of $X$. 

All Riemannian manifolds under consideration in this paper are connected and complete.

\subsection{Gromov-Hausdorff convergence}
Recall that two pointed metric spaces $(X,x)$ and $(Y,y)$ are close to each other in the pointed Gromov-Hausdorff topology if for $R>0$ large and $\epsilon>0$ small there is $L$ close to one and two discrete subsets $U\subset X$ and $V\subset Y$ with $x\in U$, $y\in V$,
$$B(x,R,X)\subset\bigcup_{u\in U}B(u,\epsilon,X),\ \ B(y,R,Y)\subset\bigcup_{v\in V}B(v,\epsilon,Y)$$ and an $L$-bi-Lipschitz map $(U,d_X)\to(V,d_Y)$ mapping $x$ to $y$. We denote by $\CH$ the space of all isometry classes of pointed proper metric spaces endowed with the pointed Hausdorff topology. 

The Bishop-Gromov theorem \cite{Gromov-book} implies that if $M$ is a Riemannian $d$-manifold with bounded sectional curvature $\vert\kappa_M\vert\le 1$ then for all $x\in M$ and all $r\le R$ we have
$$\frac{\vol_M(B(p,R,M))}{\vol_M(B(p,r,M))}\le\frac{\vol_{\BH^d}(B(p,R,\BH^d))}{\vol_{\BH^d}(B(p,r,\BH^d))}.$$
This implies that for every $\epsilon$ and $R$ there is a bound depending only on dimension on the number of points in an $\epsilon$-net which are contained in a given ball of radius $R$. We will use this fact very often later on, but at this point we just want to observe the it implies following well-known fact. Recall that $\CH$ is the space of isometry classes of pointed metric spaces with the Gromov topology.
\medskip

\noindent{\bf Fact.} {\em The subset of $\CH$ consisting of pointed Riemannian manifolds $(M,p)$ of dimension $d$ and pinched curvature $\vert\kappa_M\vert\le 1$ is precompact in $\CH$ and its closure is separable.}
\medskip

The limits in $\CH$ of sequences $(M_i,p_i)$ of pointed Riemannian $d$-dimensional manifolds with pinched curvature satisfy strong regularity properties. In this paper we will be only interested in the limits of sequences for which the injectivity radius $\inj(M_i,p_i)$ at the base point is uniformly bounded from below. In this setting we have the following amazing result due to Gromov:

\begin{named}{$C^{1,1}$-compactness theorem}[Gromov] Fix $d$ and $\epsilon$ and suppose that $(M_i,p_i)$ is a sequence of pointed Riemannian $d$-manifolds satisfying
$$\vert\kappa_{M_i}\vert\le 1\ \ \hbox{and}\ \ \inj(M_i,p_i)\ge\epsilon$$
and converging in $\CH$ to a pointed metric space $(X,x)$. Then $X$ is a smooth manifold endowed with a $C^{1,1}$-Riemannian metric and $(M_i,p_i)$ converges to $(X,x)$ in the $C^{1,\alpha}$-topology for all $\alpha<1$.
\end{named}

Recall that a sequence $(M_i,p_i)$ of pointed Riemannian manifolds converges in the $C^{1,\alpha}$-topology to a pointed Riemannian manifold $(N,p)$ if for all $R>0$ there is a domain $\Omega\subset N$ containing $B_M(p,R)$ and a sequence of maps
$$f_i:(\Omega,p)\to(M_i,p_i)$$
so that the pulled-back metrics converge in the $C^{1,\alpha}$-topology on tensors on $\Omega$ to the restriction to $\Omega$ of the metric of $N$. 

Recall also that a $C^{1,1}$-Riemannian metric on a manifold is one whose first derivatives are Lipschitz continuous. In particular, the second derivatives, and hence the curvature, exist almost everywhere. In any case, the limit $X$ of any sequence $(M_i)$ as in the $C^{1,1}$-compactness theorem has curvature pinched by $\pm 1$ in the sense of Alexandroff. Recall that $X$ is in fact a smooth Riemannian manifold of constant curvature $\kappa$ if the approximating manifolds satisfy $\Vert\kappa_{M_i}-\kappa\Vert_\infty\to 0$ \cite{Gromov-negcur,Petersen-book}. We will need this result in the following form:
\medskip

\noindent{\bf Fact.} {\em Suppose that $(M_i,p_i)$ is a sequence of pointed Riemannian manifolds with $\inj(M_i,p_i)$ uniformly bounded from below and such that $\vert\kappa_{M_i}\vert\le\epsilon_i\to 0$. If $(M_i,p_i)$ converges in $\CH$ to $(X,x)$ then $X$ is a flat Riemannian manifold.}
\medskip

Before moving on we remind the reader that by Cartan's classical theorem, every complete flat manifold is isometric to the quotient of $\BR^d$ by a discrete group of isometries.
\medskip

We refer to \cite{doCarmo} for classical results in Riemannian geometry, to \cite{Gromov-book} for facts and definitions on the Gromov-Hausdorff topology and much more, and to \cite{Petersen-book,Petersen-art} for proofs of the $C^{1,1}$-compactness theorem.

\subsection{Distributional limits}\label{sec:weak}
Suppose now that $(M_i)$ is a sequence of closed Riemannian $d$-manifolds with pinched curvature $\vert\kappa_{M_i}\vert\le 1$. For each $i$ we consider the continuous map
$$\iota_{M_i}:M_i\to\CH,\ \ \iota_{M_i}(p)=(M_i,p).$$
Being a Riemannian manifold, $M_i$ is endowed with a natural measure $\vol_{M_i}$. Pushing forward by $\iota_{M_i}$ the associated probability measure $\frac 1{\vol_{M_i}(M_i)}\vol_{M_i}$ we obtain a sequence of probability measures on $\CH$. As we mentioned earlier, all these measures are simultaneously supported by a compact separable subspace of $\CH$. Hence, the sequence of probability measures $\left(\iota_{M_i}\right)_*\left(\frac 1{\vol_{M_i}(M_i)}\vol_{M_i}\right)$ has a subsequence which converges in the weak-*-topology to some probability measure $\lambda$. If the sequence actually converges, then
$$\lambda=\lim_{i\to\infty}\left(\iota_{M_i}\right)_*\left(\frac 1{\vol_{M_i}(M_i)}\vol_{M_i}\right)$$
is the {\em distributional limit} of the sequence of manifolds $(M_i)$.

Clearly, when trying to prove something about distributional limits of sequences of manifolds one wants to argue in the manifolds themselves. The following lemma whose proof is left to the reader provides this.

\begin{lem}\label{lem-generic}
Suppose that $(M_i)$ is a sequence of Riemannian $d$-manifolds with $\vert\kappa_{M_i}\vert\le 1$ and with distributional limit $\lambda$. Given $b\in[0,1]$ suppose that $(U_i)$ is a sequence of subsets of $M_i$, with $\vol_{M_i}(U_i)\ge b\vol_{M_i}(M_i)$ for all sufficiently large $i$. The set of those $(X,x)\in\CH$ for which there is a sequence $(p_i)$ of points with $p_i\in U_i$ such that a subsequence of $(M_i,p_i)$ converges to $(X,x)$ has at least $\lambda$-measure $b$.\qed
\end{lem}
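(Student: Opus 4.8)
The plan is to prove that the set $A\subset\CH$ described in the statement --- the set of those $(X,x)$ for which some sequence $p_i\in U_i$ has a subsequence with $(M_i,p_i)\to(X,x)$ --- has $\lambda$-measure at least $b$, by combining a compactness argument with the portmanteau theorem. If $b=0$ there is nothing to do, so assume $b>0$; then $U_i\neq\emptyset$ for all large $i$, since $0<\vol_{M_i}(U_i)\le\vol_{M_i}(M_i)<\infty$. By the first \textbf{Fact} of this section the probability measures $\mu_i:=(\iota_{M_i})_*(\tfrac1{\vol_{M_i}(M_i)}\vol_{M_i})$ and their weak-$*$ limit $\lambda$ are all carried by one compact metrizable subspace $\CK\subset\CH$; I fix a metric $\rho$ inducing its topology and for $\epsilon>0$ write $A_\epsilon=\{y\in\CK:\rho(y,A)\le\epsilon\}$, a closed set.

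First I would check that $A$ is closed: this is just the statement that $A$ is the Kuratowski upper limit of the sequence of sets $\iota_{M_i}(U_i)$, which follows from a routine diagonal argument (given $(X_n,x_n)\in A$ with $(X_n,x_n)\to(X,x)$, pick indices $i_n\nearrow\infty$ and points $q_n\in U_{i_n}$ with $\rho(\iota_{M_{i_n}}(q_n),(X_n,x_n))<1/n$, complete $(q_n)$ to a sequence $p_i\in U_i$, and observe $(M_{i_n},q_n)\to(X,x)$). The heart of the argument is the claim that for every $\epsilon>0$ one has $\iota_{M_i}(U_i)\subset A_\epsilon$ for all sufficiently large $i$. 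Suppose not: then there exist $\epsilon>0$, a subsequence $(i_k)$ and points $p_{i_k}\in U_{i_k}$ with $\rho(\iota_{M_{i_k}}(p_{i_k}),A)>\epsilon$. Precompactness (the \textbf{Fact}) lets me pass to a sub-subsequence along which $(M_{i_k},p_{i_k})$ converges in $\CK$ to some $(X,x)$; completing this sub-subsequence of base points to a full sequence $p_i\in U_i$ shows $(X,x)\in A$, so $\rho(\iota_{M_{i_k}}(p_{i_k}),A)\to0$ along it, contradicting the choice of the $p_{i_k}$.

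Granting the claim, the conclusion is quick. From $\iota_{M_i}(U_i)\subset A_\epsilon$ we get $U_i\subset\iota_{M_i}^{-1}(A_\epsilon)$, hence $\mu_i(A_\epsilon)\ge\vol_{M_i}(U_i)/\vol_{M_i}(M_i)\ge b$ for all large $i$. Since $A_\epsilon$ is closed, the portmanteau theorem gives $\lambda(A_\epsilon)\ge\limsup_i\mu_i(A_\epsilon)\ge b$; and since $A$ is closed we have $\bigcap_{n\ge1}A_{1/n}=A$, so continuity of $\lambda$ from above yields $\lambda(A)=\lim_n\lambda(A_{1/n})\ge b$. The one genuinely non-formal step is the claim in the previous paragraph --- promoting ``for each choice of $p_i\in U_i$ every subsequential limit of $(M_i,p_i)$ lies in $A$'' into the uniform statement that $\iota_{M_i}(U_i)$ eventually lies in an arbitrarily small neighborhood of $A$ --- and it rests precisely on the Bishop--Gromov precompactness supplied by the curvature bound; the remainder is the standard dictionary between weak-$*$ convergence of the $\mu_i$ and Kuratowski convergence of the sets $\iota_{M_i}(U_i)$.
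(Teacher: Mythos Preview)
Your proof is correct. The paper actually omits the proof of this lemma entirely (note the \qed\ at the end of the statement and the phrase ``whose proof is left to the reader''), so there is nothing to compare against; the argument you give --- identifying $A$ as the Kuratowski upper limit of the sets $\iota_{M_i}(U_i)$, using Bishop--Gromov precompactness to force $\iota_{M_i}(U_i)\subset A_\epsilon$ eventually, and then applying portmanteau plus continuity from above --- is exactly the standard way to fill this in.
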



\noindent{\bf Example:} {\em Biassed versus unbiassed choice of base points.} We discuss now an example showing that Theorem \ref{sat1} utterly fails if we do not assume that the sequences of base points is chosen by random.

Denote by $\BS^2$ and $\BS^3$ the round spheres of radius 1 in dimension 2 and 3. There are 4 open metric balls $B_1,B_2,B_3,B_4\subset\BS^3$ with disjoint closures and a metric $\rho$ on $\BS^3\setminus\bigcup B_i$ conformal to the restriction of the standard metric of $\BS^3$ and such that each boundary component has a neighborhood isometric to $\BS^2\times[0,1]$; set 
$$Y=(\BS^3\setminus\bigcup B_i,\rho).$$
Let now $B$ be any metric ball in $\BS^3$ and notice again that there is a metric $\rho'$ on $\BS^3 \setminus B$ conformal to the restriction of the standard metric of $\BS^3$ and such that the boundary has a neighborhood isometric to $\BS^2\times[0,1]$; set
$$E=(\BS^3\setminus B,\rho').$$
Consider now the infinite 4-valent tree $T$ and fix a root $t\in T$. For $i\ge 1$ consider $T_i$ the ball in $T$ of radius $i$ centered at $t$. We associate to each $i$ a manifold $M_i$ obtained as follows: For each interior vertex of $T_i$ take a copy of $Y$ and for every terminal vertex a copy of $E$. Then glue all this pieces, via local isometries, according to the combinatorics determined by $T_i$. The manifold $M_i$ is conformally equivalent to $\BS^3$ for all $i$. This can be either seen by constructing directly a conformal diffeomorphism $M_i\to\BS^3$ or can be deduced from the facts that (1) $M_i$ is homeomorphic to $\BS^3$ and hence simply connected and (2) $M_i$ is conformally flat \cite{Kuiper}.

If we choose now base points $p_i\in M_i$ in the $Y$-piece corresponding to the root $t\in T_i$, then any geometric limit $(X,x)$ of the sequence $(M_i,p_i)$ is going to be isometric to a manifold constructed from infinitely many $Y$-pieces glued accordingly to the combinatorics of $T$. The manifold $X$ is conformally equivalent to the complement of a (tame) Cantor set in $\BS^3$.

\begin{figure}[h]
        \centering
         \includegraphics[width=5cm]{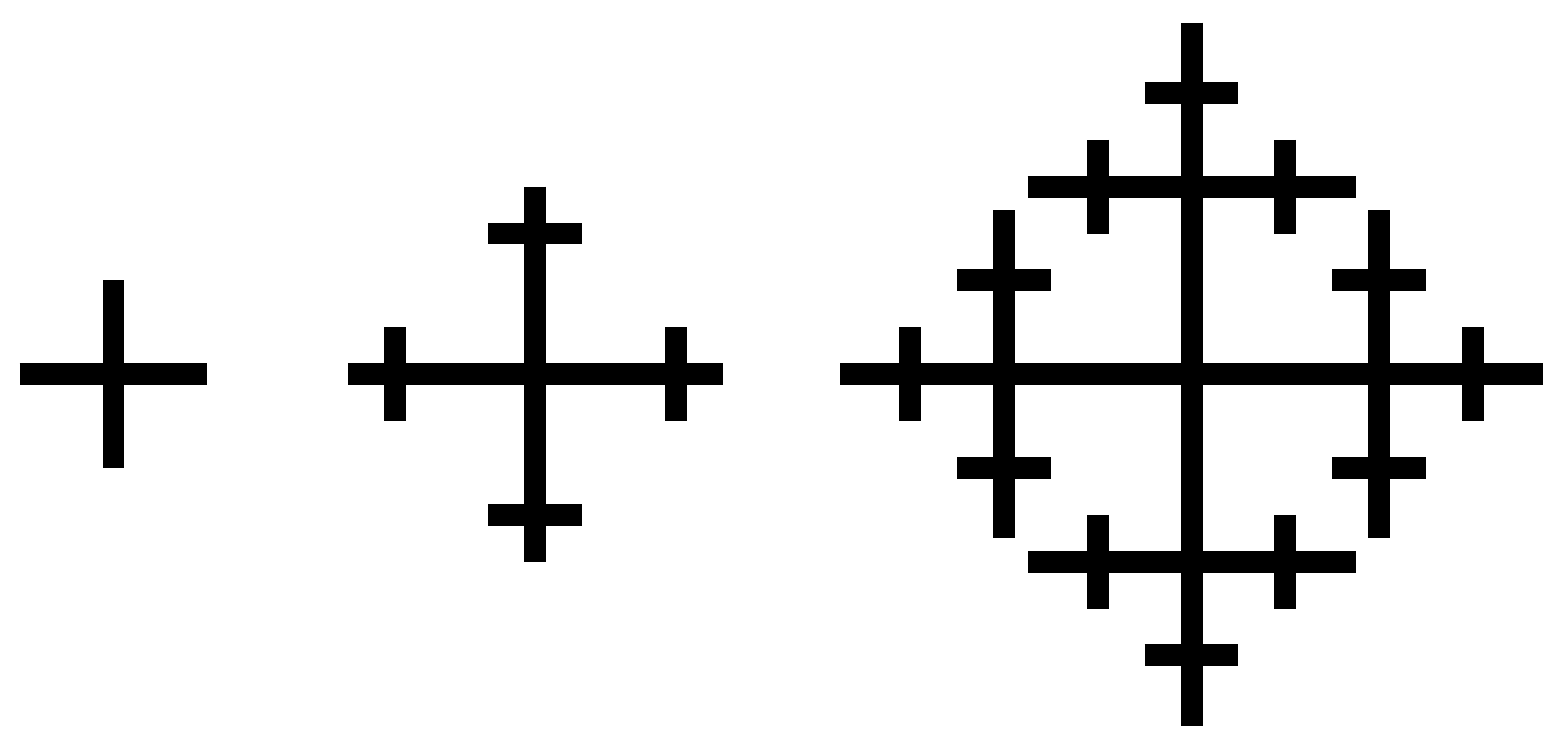}
         \caption{}
\end{figure}

The example we just discussed shows that all the results of this paper just apply for unbiassed choices of base points and not for arbitrary base points. In this example, the reason why the base points $p_i$ are very particular is that in the finite trees $T_i$, the probability of choosing a point within 100 miles of some terminal vertex is overwhelming.
\medskip

\noindent{\bf Example:} {\em Sequences whose distributional limits have infinite topology.}
Let $T'\subset T\subset \BR^3\subset \BS^3$ be a smooth Whitehead pair of solid tori in $\BS^3$, that is, $T'$ and $\BS^3\setminus T$ are tubular neighborhoods of smooth circles $S_1$ and $S_2$ forming a Whitehead link in $\BS^3$, respectively. Let $\psi \colon T \to T'$ be a diffeomorphism which extends smoothly to a neighborhood of $T$, and denote $T_k = \psi^k(T)$ for $k\ge 0$. It is now easy to find an embedding $\phi \colon T \to \BR^4$ which is the identity near $\partial T$ and satisfies $\phi\circ \psi(x) = x+e_4$ on $T$. Similarly, for every $m\ge 0$, there exists an embedding $\phi_m \colon T  \to \BR^4$ which is identity near $\partial T$ and satisfies $\phi_m\circ \psi^k(x) = \phi_m(x)+k e_4$ for all $x\in T_0\setminus T_1$ and all $k\in \{0,\ldots, m-1\}$. 

We fix now Riemannian metrics $g_m$ on $\BS^3$ induced by these partial embeddings, that is, $(T,g_m)$ is isometric to $\phi_m(T)$ and $(\BS^3\setminus T,g_m)$ to $\BS^3\setminus T$. The manifolds $S_m = (\BS^3,g_m)$ have injectivity radius uniformly bounded from below and uniformly pinched sectional curvature.

Since $\vol(\phi_m(T_0\setminus T_m)) = m \vol(\phi(T_0\setminus T_1)) \to \infty$ as $m\to \infty$, we easily conclude that, for every distributional limit $\lambda$ of the sequence $(S_m)$, the limit space $(X,p)$ is $\lambda$-almost surely the infinite Whitehead tower
\[
W = \bigcup_{k\in \BZ} \left( \phi(T\setminus T')+ke_4\right)
\]
having infinite topology.

\begin{bem}
It seems that the notion of distributional limit of a sequence of metric spaces was introduced by Benjamini and Schramm in the context of graphs \cite{BS}. After their seminal result this notion has played an important role in probability theory. However, in the context of Riemannian manifolds it remains, to our knowledge, almost unexplored (see however \cite{samurais}).
\end{bem}

\section{Quasi-conformal maps}\label{sec:quasi}
We recall now some basic facts on quasi-conformal mappings and their compactness properties. All the results in this section are classical for quasi-conformal maps $\BR^d\to\BR^d$ and also exist in some form in the literature for quasi-conformal maps between rather general metric spaces. It is however hard to give concrete references for the statements in the Riemannian setting and this is why we give rather complete proofs. The discussion here parallels Euclidean results in \cite{Vaisala} and \cite{Rickman-book} although we base some of the arguments on the notion of a \emph{Loewner space} introduced by Heinonen and Koskela in \cite{HK}; see also \cite{Heinonen-Book}.
\medskip

Let $M$ and $N$ be Riemannian manifolds. A homeomorphism $f\colon M \to N$ is \emph{quasi-conformal} if there exists $H\ge 1$ so that for all $x\in M$
\begin{equation}
\label{eq:QC_H}
\limsup_{r \to 0} \frac{\max_{d(x,y)=r} d(f(x),f(y))}{\min_{d(x,y)=r} d(f(x),f(y))} \le H <\infty.
\end{equation}
Equivalently, $f$ is in the local Sobolev space $W^{1,n}_{\loc}(M,N)$ of mappings $M\to N$ and there exists $K\ge 1$ so that
\begin{equation}
\label{eq:QC_K}
\Vert df_x\Vert^n \le K \det(df_x)\quad \hbox{for almost every}\ x\in M.
\end{equation}
In particular, we say that $f$ is \emph{$K$-quasi-conformal} if it satisfies \eqref{eq:QC_K} with constant $K\ge 1$. Both of the given definitions extend naturally to local homeomorphisms from $M$ to $N$. 

Before moving on we recall the following consequence of Rauch's comparison theorem \cite{doCarmo}:
\medskip

\noindent{\bf Fact.} For all $\kappa\ge 0$ and $L>1$ there is $R>0$ such that for every Riemannian $d$-manifold $M$ with $\vert\kappa_M\vert\le\kappa$ and every $x\in M$ the restriction 
$$\exp_x:B(0,R,T_xM)\to B(x,R,M)$$
of the exponential map at $x$ to the ball of radius $R$ is locally $L$-bilipschitz and hence an $L^{2d}$-quasi-conformal local homeomorphism.

\subsection{Modulus}
The \emph{conformal modulus of a path family} of a family of paths $\Gamma$ in a $d$-dimensional Riemannian manifold $M$ is
\[
\Mod_d(\Gamma) = \inf_{\rho} \int_M \rho^d \vol_M
\]
where the infimum is taken over all non-negative Borel functions $\rho$ on $M$ satisfying
\[
\int_\gamma \rho \ds \ge 1
\]
for all locally rectifiable paths $\gamma\in \Gamma$; see \cite[Chapter 1]{Vaisala} or \cite{Heinonen-Book}. Based essentially on change of variables, it is now easy to see that a $K$-quasi-conformal homeomorphism $f\colon M \to N$ satisfies for every path family $\Gamma$ in $M$ the inequality
\begin{equation}
\label{eq:QC_Gamma}
\frac{1}{K^{d-1}} \Mod_d(\Gamma) \le \Mod_d(f(\Gamma)) \le K^{d-1} \Mod_d(\Gamma),
\end{equation}
where $f(\Gamma) = \{ f\circ \gamma \colon \gamma \in \Gamma\}$. 

\begin{bem}
It is known that if $f:M\to N$ is a homeomorphism satisfying \eqref{eq:QC_Gamma} then $f$ is actually quasi-conformal, see \cite[Chapter 4]{Vaisala}.
\end{bem}

If $M$ is a Riemannian manifold, $\Omega\subset M$ is open and $E,F\subset\Omega$ are continua, we denote by $\Gamma(E,F;\Omega)$ the family of all paths $\gamma$ connecting $E$ and $F$ in $\Omega$, that is, paths $\gamma:[a,b]\to M$ so that $\gamma(a)\in E$, $\gamma(b)\in F$ and $\gamma(a,b) \subset \Omega$; note that $\Gamma(E,F,\Omega)$ may be empty. When $\Omega=M$ and there is no need to specify the ambient manifold we write $\Gamma(E,F)=\Gamma(E,F,M)$.

Modulus estimates are the basic tool to obtain qualitative information on quasi-conformal embeddings. The two basic estimates in $\BR^d$ are the capacity of an annulus \cite[7.5]{Vaisala}
\begin{equation}
\label{eq:cap}
\Mod_d(\Gamma(\partial B(0,s,\BR^d), \partial B(0,t,\BR^d)))= \vol(\BS^{d-1})\left( \log\frac{t}{s}\right)^{1-d} 
\end{equation}
and the L\"owner type estimate \cite[10.12]{Vaisala}
\begin{equation}
\label{eq:ELoewner}
\Mod_d(\Gamma(E,F;B(0,t,\BR^d)\setminus B(0,s,\BR^d)))\ge C(d) \log \frac{t}{s} 
\end{equation}
for $0<s<t<\infty$, where in \eqref{eq:ELoewner} the sets $E$ and $F$ are continua in $\BR^d$ connecting boundary components of the annulus $B(0,t,\BR^d)\setminus B(0,s,\BR^d)$.

The spaces satisfying an inequality of the type \eqref{eq:ELoewner} are called Loewner spaces. More precisely, we say that $M$ is a $d$-\emph{Loewner space} if there exists a function 
$$\phi_M \colon (0,\infty) \to (0,\infty)\ \ \hbox{with}\ \ \lim_{t\to 0}\phi_M(t)=\infty$$ 
so that 
\begin{equation}
\label{eq:Loewner}
\Mod_d(\Gamma(E,F;M)) \ge \phi_M\left( \Delta(E,F)\right)
\end{equation}
for all continua $E$ and $F$ in $M$; where 
$$\Delta(E,F) = \frac{\min_{x\in E,y\in F}d(x,y)}{\min\{ \diam_M E, \diam_M F\}}$$
is the \emph{relative distance of $E$ and $F$}. A fundamental observation is that $\BR^d$ and all closed Riemannian $d$-manifolds are Loewner spaces that are also {\em Ahlfors $d$-regular}, meaning that there exists a constant $C\ge 1$ so that \[ C^{-1} r^d \le \vol(B(x,r,M)) \le C r^d\] 
for all balls $B(x,r,M)$ with $r$ positive and smaller then the diameter of $M$. Due to the Ahlfors regularity, {\em we may take the function $\phi_M$ to be a decreasing homeomorphism}. 

We refer to the seminal paper of Heinonen and Koskela \cite{HK} for fundamental results on Loewner spaces.

\subsection{Nets and quasi-conformal maps}
Many of the arguments in this paper will rely on discretizing manifolds and using the fact that  quasi-conformal maps behave well with respect to discretizations. The following observation will not surprise any expert:

\begin{lem}\label{lemma:density}
Suppose $X$ is a complete Riemannian $d$-manifold with $|\kappa_X|\le 1$ and $\inj(X)\ge\pi$. Let $\CN\subset X$ be a maximal $1$-net on $X$ and $f\colon X\to \BR^d$ a $K$-quasi-conformal embedding so that $f(X)\ne \BR^n$. Then every point in $\overline{f(X)}\setminus f(X)$ is also an accumulation point of $f(\CN)$.
\end{lem}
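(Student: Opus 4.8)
The plan is to argue by contradiction: suppose $y_0 \in \overline{f(X)}\setminus f(X)$ is \emph{not} an accumulation point of $f(\CN)$. Then there is a radius $\delta>0$ such that the ball $B(y_0,\delta,\BR^d)$ contains at most one point of $f(\CN)$; shrinking $\delta$ we may assume it contains none. The key geometric input is that, because $\CN$ is $1$-dense in $X$ and $f$ is a homeomorphism onto $f(X)$, the set $f(\CN)$ is ``coarsely dense'' in $f(X)$ in the following sense: the complement $f(X)\setminus f(\CN)$ is covered by the images $f(B(p,1,X))$, $p\in\CN$. So if $B(y_0,\delta,\BR^d)\cap f(X)$ contains no point of $f(\CN)$, every point of it lies in $f(B(p,1,X))$ for some $p\in\CN$ with $f(p)\notin B(y_0,\delta,\BR^d)$. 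The idea is to show this forces some ball $f(B(p,1,X))$ to be very large --- it must reach from outside $B(y_0,\delta/2,\BR^d)$ in close to $y_0$ --- while on the other hand a modulus estimate bounds its size, and by taking a sequence $y_j\to y_0$ with $y_j\in f(X)$ we drive this to a contradiction.

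First I would set up the modulus estimate controlling the size of $f(B(p,1,X))$. By the Bishop--Gromov/Rauch comparison facts recalled in the text, under $|\kappa_X|\le 1$ and $\inj(X)\ge \pi$ the ball $B(p,1,X)$ is bilipschitz to a Euclidean ball with uniform constants, and in particular there is a uniform lower bound $m_0>0$, depending only on $d$, for $\Mod_d$ of the family of paths joining $\partial B(p,1/2,X)$ to $X\setminus B(p,1,X)$ inside the annulus $B(p,1,X)\setminus B(p,1/2,X)$ (this is the Loewner-type estimate \eqref{eq:ELoewner} transported to $X$). Applying the quasi-conformal distortion inequality \eqref{eq:QC_Gamma} to $f$, the image family in $\BR^d$ has modulus at least $m_0/K^{d-1}$. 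But the image annulus lies in $f(X)\subset\BR^d$, and it separates the continuum $f(\partial B(p,1/2,X))$ from ``$\infty$ within $f(X)$''. Using the capacity-of-annulus formula \eqref{eq:cap} together with the fact that $f(B(p,1,X))$ avoids $B(y_0,\delta,\BR^d)$ while the relevant boundary continuum must pass near $y_0$ (because $y_j\to y_0$ forces preimages to escape every compact subset of $X$), one gets that the Euclidean diameter of $f(B(p,1,X))$ cannot be too small compared to the ``conformal distance'' it has to span; but simultaneously it must be contained in an annular region whose outer-to-inner radius ratio is controlled, pinning its diameter from above. Reconciling these bounds as $y_j\to y_0$ yields the contradiction.

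The cleanest way to organize the last step is as follows. Pick $y_j\in f(X)$ with $y_j\to y_0$, write $y_j=f(q_j)$, and choose $p_j\in\CN$ with $q_j\in B(p_j,1,X)$. Since $y_0\notin f(X)$, the points $q_j$ leave every compact set, so (using properness, guaranteed by the curvature bound) $p_j$ also leaves every compact set; in particular $f(p_j)\to y_0$ as well, because $|f(p_j)-y_j|\le \diam f(B(p_j,1,X))$ and this diameter tends to $0$ --- this last claim is itself what must be proved, and it follows from the modulus estimate: a path family of uniformly positive modulus $m_0/K^{d-1}$ joining $f(\partial B(p_j,1/2,X))$ to $\BR^d\setminus f(B(p_j,1,X))$ inside $f(B(p_j,1,X))$ would, if $\diam f(B(p_j,1,X))$ stayed bounded below, eventually be trapped between two concentric spheres around $y_0$ of comparable radii (since both $f(p_j)$ and the ``far'' boundary accumulate at the single point $y_0$ from inside $f(X)\subset \BR^d\setminus B(y_0,\delta,\BR^d)$ once $j$ is large), forcing the modulus to $0$ by \eqref{eq:cap} --- contradiction. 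Hence $\diam f(B(p_j,1,X))\to 0$, so $f(p_j)\to y_0$, and since the $p_j$ are eventually distinct ($q_j\to y_0\notin f(X)$, so no single ball works for all $j$), $y_0$ is an accumulation point of $f(\CN)$, against our assumption.

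The main obstacle I expect is making precise the passage ``$f(B(p,1,X))$ has bounded diameter $\Rightarrow$ it lies between comparable concentric spheres about $y_0$ $\Rightarrow$ small modulus''. The subtlety is that $f(B(p,1,X))$ need not be an annulus around $y_0$ — it is just some open set avoiding $B(y_0,\delta,\BR^d)$ whose closure meets $y_0$ — so the capacity bound \eqref{eq:cap} has to be invoked via monotonicity of modulus: the path family in question is contained in the family of paths in the spherical ring $B(y_0,\diam,\BR^d)\setminus B(y_0,\delta/2,\BR^d)$ joining its two boundary spheres (once $j$ is large so that $f(p_j)$ and $y_j$ are both within $\delta/2$ of $y_0$ while the set avoids $B(y_0,\delta,\BR^d)$ --- here I may need $\diam f(B(p_j,1,X))$ already somewhat small to even make this ring nonempty, so the argument should be phrased as: \emph{if} $\liminf \diam f(B(p_j,1,X))>0$ we derive a contradiction, which suffices). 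Getting the ordering of the quantifiers ($\delta$, then large $j$, then the ring radii) right is the delicate bookkeeping; everything else is a direct application of \eqref{eq:QC_Gamma}, \eqref{eq:cap}, \eqref{eq:ELoewner} and the comparison-geometry facts already recorded.
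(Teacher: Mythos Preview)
Your overall strategy---pick $y_j\to y_0$ in $f(X)$, set $q_j=f^{-1}(y_j)$, choose $p_j\in\CN$ with $d_X(p_j,q_j)\le 1$, and show $f(p_j)\to y_0$---is exactly the paper's. The gap is in how you argue the key step $|f(p_j)-f(q_j)|\to 0$.

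Your modulus argument does not work as written. Two concrete problems. First, you center the balls at $p_j$ and use the annular family in $B(p_j,1,X)\setminus B(p_j,1/2,X)$, but $q_j$ need not lie in $B(p_j,1/2,X)$, so knowing that $f(q_j)$ is near $y_0$ tells you nothing about where the image of that annulus sits. Second, your upper modulus bound is incoherent: under your contradiction hypothesis $f(p_j)\notin B(y_0,\delta,\BR^d)$, hence $\diam f(B(p_j,1,X))\ge |f(p_j)-y_j|\ge \delta/2$ for large $j$; the image is certainly \emph{not} trapped in a thin spherical ring about $y_0$. The parenthetical ``$f(X)\subset\BR^d\setminus B(y_0,\delta,\BR^d)$'' is simply false---only $f(\CN)$ avoids that ball, by assumption. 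This is not a matter of quantifier bookkeeping; the geometric picture you are invoking does not hold.

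The fix the paper uses is to center at $q_j$ (its $x_i$), not at $p_j$. Compose $f$ with $\exp_{q_j}$ to get a uniformly quasi-conformal $h_j\colon B(0,3,T_{q_j}X)\to\BR^d$. Since $y_0\notin f(X)\supset h_j(B(0,3))$, one has
\[
d\bigl(h_j(0),\,\partial h_j(B(0,3))\bigr)\le |y_j-y_0|.
\]
Now $p_j$ lifts to some $p_j'$ with $|p_j'|\le 2$, well inside $B(0,3)$, so local quasi-symmetry (V\"ais\"al\"a~18.1---itself a modulus argument, but for the family $\Gamma([0,p_j'],\partial B(0,3))$, not an annular family) gives
\[
|f(p_j)-f(q_j)|=|h_j(p_j')-h_j(0)|\le \theta(2/3)\,d\bigl(h_j(0),\,\partial h_j(B(0,3))\bigr)\le \theta(2/3)\,|y_j-y_0|\to 0.
\]
This yields $f(p_j)\to y_0$ directly; no contradiction setup is needed. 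Your remark that infinitely many of the $p_j$ must be distinct is correct and completes the argument.
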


Notice that under the assumptions of Lemma \ref{lemma:density} Rauch's comparison theorem implies that there is $K'$ such that for all $x\in X$ the map
$$\exp_x:B(0,3,T_xX)\to B(x,3,X)$$
is $K'$-quasi-conformal and maps $B(0,1,T_xX)$ is $B(x,1,X)$.

\begin{proof}
Given $y\in \overline{f(X)}\setminus f(X)$ let $(x_i)$ be a sequence in $X$ such that $\lim_{i\to\infty}f(x_i)=y$ and choose for each $i$ a point $p_i\in\CN$ so that $d_X(x_i,p_i)\in[0,1]$ and let $p_i'\in B(0,2,T_{x_i}X)$ be such that $\exp_{x_i}(p_i')=p_i$. Consider the $KK'$-quasi-conformal map
$$h_i=f\circ\exp_{x_i}: B(0,3,T_{x_i}X)\to\BR^d$$
A modulus argument (see \cite[18.1]{Vaisala}) shows that there is an increasing function $\theta \colon (0,1) \to (0,\infty)$ depending only on $d$ and $KK'$ so that
$$\frac{|h_i(0)-h_i(p_i')|}{d(h_i(0), \partial h_i(B(0,3,T_{x_i}X)))} \le \theta\left( \frac{|p_i'|}{d(0, \partial B(0,3,T_{x_i}X)))}\right)\le\theta\left( \frac23\right).$$
Because $y\notin f(X)$, we must have
 \[ d(f(x_i), y) \ge d(f(x_i), \D h_i(B(0,3,T_{x_i}X))); \]
 hence
$$| f(x_i)-f(p_i)| = |h_i(0)-h_i(p_i')| \le C(n,K)d(f(x_i),y) \to 0$$
as $i \to 0$, and the points $f(p_i)$ in the image of the net accumulate to $y$ as we needed to prove.
\end{proof}

\subsection{Contraction property}
Throughout this paper we will be considering quasi-conformal maps which on the large scale decrease volume by a large factor. The following proposition asserts that in such a situation the image of most balls of given radius has very small diameter:

\begin{prop}\label{prop:contraction}
For all $d$, $K$ and $\kappa$ there are $C(d,K,\kappa)$ and $\epsilon_0(d,K,\kappa)$ such that the following holds: If $f \colon M' \to M$ is a $K$-quasi-conformal homeomorphism between two Riemannian $d$-manifolds satisfying 
$$|\kappa_{M'}|\le 1,\ \ \inj(M')\ge 10\ \ \hbox{and}\ \ |\kappa_M|\le\kappa,$$ 
then for every $1$-net $\CN\subset M'$,
$$\vert\{ p \in\CN \vert \diam_M(f(B(p,R,M'))) > \varepsilon\}\vert \le C(d,K,\kappa) \vol_M(M) R^d\varepsilon^{-d}e^{(d-1)R} $$
for every $R\ge 1$ and every $\varepsilon\in(0,\epsilon_0(d,K,\kappa))$.
\end{prop}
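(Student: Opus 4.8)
The plan is to count the "bad" net points — those $p\in\CN$ with $\diam_M(f(B(p,R,M')))>\varepsilon$ — by a volume argument. Fix such a $p$. Since $f$ is a homeomorphism, $f(B(p,R,M'))$ is an open connected subset of $M$ containing two points at distance $>\varepsilon$ apart; in particular it contains a continuum of diameter at least $\varepsilon/2$, hence (using Ahlfors $d$-regularity of the closed manifold $M$, which the excerpt records) its volume is bounded below: $\vol_M(f(B(p,R,M')))\ge c(d,\kappa)\varepsilon^d$, provided $\varepsilon<\epsilon_0$ is small enough that $\varepsilon/2$ is below the diameter scale of $M$ where the lower regularity bound applies. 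The first key step is to make this lower bound precise: a connected set of diameter $\ge\varepsilon/2$ contains a ball of radius comparable to $\varepsilon$ (or at least meets enough disjoint balls), so Ahlfors regularity gives the claimed $\gtrsim\varepsilon^d$ volume.

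The second key step is to control the overlap of the sets $\{f(B(p,R,M'))\}_{p\in\CN_{\mathrm{bad}}}$. Here I would \emph{not} work with $f$ directly but pull back via the exponential map as in Lemma \ref{lemma:density}: by Rauch's comparison theorem there is $K'=K'(d)$ so that $\exp_p\colon B(0,3,T_pM')\to B(p,3,M')$ is $K'$-quasi-conformal and sends the unit ball onto $B(p,1,M')$, so $h_p=f\circ\exp_p$ is $KK'$-quasi-conformal on $B(0,3,T_pM')$. Because the net points are $1$-separated in $M'$, a given point $q\in M'$ lies in $B(p,R,M')$ for at most $N(d,R)$ net points $p$, where $N(d,R)\lesssim e^{(d-1)R}\cdot(\text{const})^d$ by the Bishop--Gromov bound on the number of $1$-separated points in an $R$-ball (this is exactly the "number of points in a net inside a ball of radius $R$" estimate quoted after the Bishop--Gromov theorem in Section \ref{sec-preli}; the explicit $R^d e^{(d-1)R}$ shape comes from the hyperbolic comparison volume). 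Pushing this forward by the homeomorphism $f$, each point of $M$ is covered by at most $N(d,R)$ of the sets $f(B(p,R,M'))$.

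Combining the two steps: summing the lower volume bound over bad net points and using the bounded-overlap estimate,
\[
c(d,\kappa)\,\varepsilon^d\,\big|\CN_{\mathrm{bad}}\big|
\;\le\;\sum_{p\in\CN_{\mathrm{bad}}}\vol_M\!\big(f(B(p,R,M'))\big)
\;\le\; N(d,R)\,\vol_M(M)
\;\le\; C(d,\kappa)\,R^d e^{(d-1)R}\,\vol_M(M),
\]
which rearranges to the asserted bound with $C(d,K,\kappa)$ absorbing the constants. The main obstacle I anticipate is the first step done uniformly: the lower bound $\vol_M(f(B(p,R,M')))\ge c\varepsilon^d$ must hold for \emph{all} admissible $M$ with $|\kappa_M|\le\kappa$ with a constant depending only on $(d,\kappa)$ and not on the injectivity radius of $M$ — this is why one needs the cutoff $\varepsilon<\epsilon_0(d,K,\kappa)$, forcing the relevant scale $\varepsilon/2$ into the regime where one can use a clean lower Ahlfors bound (e.g. the Euclidean-type volume lower bound from Rauch's theorem applied to $\exp$ in $M$ at scale $\varepsilon$), and it is the place where the role of $K$ actually enters, since one wants a definite-diameter continuum \emph{in $M$} and must track how the quasi-conformal distortion interacts with the $\varepsilon$-separated pair. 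Everything else is bookkeeping with the counting estimate for $1$-nets inside $R$-balls under a lower curvature bound.
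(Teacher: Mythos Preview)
Your overlap count (step 2) is correct and is exactly the Bishop--Gromov packing bound the paper uses. The gap is entirely in step 1.

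The inference ``connected open set of diameter $\ge\varepsilon/2$ $\Rightarrow$ volume $\gtrsim\varepsilon^d$'' is false, and Ahlfors regularity does not rescue it: Ahlfors regularity bounds the volume of \emph{balls}, not of arbitrary continua or open sets. A thin open tube in $M$ of length $\varepsilon$ and cross-section $\delta^{d-1}$ has diameter $\varepsilon$ and volume $\varepsilon\delta^{d-1}$, which can be made as small as you like. So the question becomes: does the $K$-quasi-conformality of $f$ forbid $f(B(p,R,M'))$ from being such a tube? At small scales yes, by the egg-yolk/roundness estimate \eqref{eq:round}, but that estimate is proved for \emph{Euclidean} balls and transfers to $M'$ only on balls of radius $\le 3$ or so, where $\exp_p$ is uniformly bilipschitz. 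For the large ball $B(p,R,M')$ with $R\gg 1$ there is no direct roundness estimate available, and your sketch does not supply one. Your own remark that ``this is the place where the role of $K$ actually enters'' is exactly right, but you have not said how.

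The paper's fix is a two-scale reduction. First (Lemma \ref{prop:diam}) one proves the proposition at the fixed small scale $2\delta$ with $\delta<3$: here the roundness estimate \eqref{eq:round} (via Lemma \ref{lemma:contraction}) does give $\vol_M(f(B(x,\delta,M')))\gtrsim \varepsilon^d$ whenever $\diam_M f(B(x,2\delta,M'))>\varepsilon$, and then your overlap argument finishes that case. Second, for general $R\ge 1$, if $\diam_M f(B(p,R,M'))>\varepsilon$ one joins $p$ to a far image point by a geodesic of length $\le R$, covers it by $\le R$ balls $B(p_i,2,M')$ with $p_i\in\CN$, and concludes that some $p_i$ has $\diam_M f(B(p_i,2,M'))>\varepsilon/(2R)$. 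Thus every bad point at scale $(R,\varepsilon)$ lies within $R$ of a bad net point at scale $(2,\varepsilon/2R)$; the small-scale lemma bounds the latter by $C\,\vol_M(M)(2R/\varepsilon)^d$, and Bishop--Gromov turns ``within $R$'' into the factor $e^{(d-1)R}$. This is where the $R^d$ in the final bound comes from; note that your argument, if it worked, would give no $R^d$, which is a hint that the uniform-in-$R$ volume lower bound you assert is too strong.
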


For the remaining of this section we fix $d$, $K$ and $\kappa$.
\medskip

The proof of Proposition \ref{prop:contraction} has its roots in the classical local quasisymmetry property of quasi-conformal maps. Let $f\colon B(0,r,\BR^d) \to B(0,R,\BR^d)$ be a $K$-quasi-conformal embedding. Then, suppressing from our notation the reference to $\BR^d$, there exists a constant $C_1=C_1(d,K)\ge 1$ so that
\begin{equation}
\label{eq:round}
B\left(f(0),\frac{\diam_{\BR^d}(f(B(0,r)))}{C_1}\right)\subset f\left(B\left(0,\frac r2\right)\right).
\end{equation}
Indeed, with $\delta_1=\diam_{\BR^d}(f(B(0,r)))$ and $\delta>0$ maximal with $B^d(f(0),\delta)\subset f(B(0,\frac r2))$ we have
\begin{eqnarray*}
\Mod_d(\Gamma(\partial B(f(0),\delta), \partial B(f(0),\delta_1))) &\le \Mod_d(\Gamma(\partial f(B(0,\frac r2)), \partial f(B^d(0,r)))) \\
&\le K^{d-1} \Mod_d(\Gamma(\D B(0,r/2),\partial B(0,r))). 
\end{eqnarray*}
The estimate \eqref{eq:round} follows now from \eqref{eq:cap}.

\begin{lem}\label{lemma:contraction}
There exist constants $\varepsilon_0=\varepsilon_0(d,K,\kappa)>0$ and $C_2=C_2(d,K)$ such that if $M$ is a Riemannian $d$-manifold with $|\kappa_M|<\kappa$, if $f\colon B(0,r,\BR^d) \to M$ is a $K$-quasi-conformal embedding and if  $\vol_M(f(B(0,r,\BR^d)))<\varepsilon_0$ then
$$\diam_M(f(B(0,r,\BR^d))) \le C (\vol f(B(0,r/2,\BR^d)))^{1/d}.$$
\end{lem}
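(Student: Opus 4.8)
The plan is to run a chaining/telescoping argument at the target, comparing the diameter of $f(B(0,r,\BR^d))$ with that of $f(B(0,r/2,\BR^d))$ via the local quasisymmetry estimate \eqref{eq:round}, and then to control the latter by volume using Ahlfors regularity of $M$ together with the fact that the image is nearly round. Concretely, write $\delta_1 = \diam_M(f(B(0,r,\BR^d)))$ and $\delta_{1/2}=\diam_M(f(B(0,r/2,\BR^d)))$. The Euclidean estimate \eqref{eq:round} — which by the \emph{Fact} following \eqref{eq:QC_K} applies verbatim with the target replaced by a ball in $M$, since $M$ is Ahlfors $d$-regular and Loewner, so the capacity estimates \eqref{eq:cap} and \eqref{eq:ELoewner} hold with dimensional constants provided $\delta_1$ is small compared to $\diam M$ — gives a metric ball $B(f(0),\delta_1/C_1,M)\subset f(B(0,r/2,\BR^d))$. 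Hence $\delta_{1/2}\ge \delta_1/C_1$, so in fact $\delta_1$ and $\delta_{1/2}$ are comparable up to the dimensional constant $C_1=C_1(d,K)$.

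Next I would convert this into the asserted volume bound. The ball $B(f(0),\delta_1/C_1,M)$ is contained in $f(B(0,r/2,\BR^d))$, so by Ahlfors $d$-regularity of $M$ (valid as long as the radius $\delta_1/C_1$ is below the diameter of $M$, which the smallness hypothesis $\vol_M(f(B(0,r,\BR^d)))<\varepsilon_0$ will guarantee after choosing $\varepsilon_0$ small depending on $d,K,\kappa$ and the geometry controlled by $\kappa$),
\[
\vol_M\bigl(f(B(0,r/2,\BR^d))\bigr)\ \ge\ \vol_M\bigl(B(f(0),\delta_1/C_1,M)\bigr)\ \ge\ C^{-1}(\delta_1/C_1)^d.
\]
Rearranging yields $\delta_1 \le C_1 C^{1/d}\,\bigl(\vol_M(f(B(0,r/2,\BR^d)))\bigr)^{1/d}$, which is exactly the claimed inequality with $C=C_2(d,K)$ absorbing the Ahlfors constant.

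The main obstacle is bookkeeping the hypotheses so that the Euclidean-style estimates \eqref{eq:round}, \eqref{eq:cap} really transfer to balls in $M$. Two points need care: first, the capacity of an annulus in $M$ agrees with the Euclidean one only up to constants, and the Loewner inequality \eqref{eq:Loewner} for $M$ involves a homeomorphism $\phi_M$ whose behaviour near $0$ must be controlled uniformly — this is where Ahlfors regularity is used to normalize $\phi_M$ to depend only on $d$ (and $\kappa$, via the comparison of $M$'s metric with a Euclidean model on small scales); second, all of this is only legitimate at scales where $M$ looks nearly Euclidean, i.e.\ where the relevant balls have radius small relative to the injectivity radius and curvature scale of $M$. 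This is precisely the role of the smallness threshold $\varepsilon_0(d,K,\kappa)$: choosing $\varepsilon_0$ small forces $\delta_1$ small (one can bootstrap: a crude modulus estimate already shows $\vol_M(f(B(0,r,\BR^d)))<\varepsilon_0$ implies $\delta_1$ is below any prescribed scale), after which $B(f(0),\delta_1,M)$ sits inside a bilipschitz-Euclidean chart of $M$ with controlled constants by Rauch comparison, and the whole argument reduces to the genuinely Euclidean statement \eqref{eq:round}. I would isolate this bootstrap as the first step and present the chaining and Ahlfors-regularity estimates afterwards as routine.
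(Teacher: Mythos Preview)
Your overall architecture---reduce to a Euclidean chart, invoke \eqref{eq:round}, then compare diameter and volume---matches the paper's. The gap is in the step you label ``bootstrap'': you assert that ``a crude modulus estimate already shows $\vol_M(f(B(0,r,\BR^d)))<\varepsilon_0$ implies $\delta_1$ is below any prescribed scale,'' but this is precisely the content of the lemma, and you do not say what that crude estimate is. A priori small volume does not force small diameter, and the proof of \eqref{eq:round} via \eqref{eq:cap} is only available once the image already sits inside a small Euclidean chart of $M$; invoking it to obtain smallness of $\delta_1$ is circular. Relatedly, the lemma assumes only $|\kappa_M|<\kappa$---no completeness or injectivity radius---so the global Ahlfors regularity and Loewner property of $M$ you appeal to are not given; they hold only at scales below $R(\kappa)$, which again presupposes $\delta_1$ small.

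The paper breaks the circularity by a continuity argument. Fix $R=R(\kappa)$ so that $\exp_p\colon B(0,R,T_pM)\to M$ is locally $2$-bilipschitz (Rauch), and let $\delta\in(0,r]$ be \emph{maximal} with $\diam_M f(B(0,\delta,\BR^d))\le R$. On $B(0,\delta,\BR^d)$ one lifts $f$ through $\exp_p$ (the domain is simply connected) to a $4^dK$-quasi-conformal map $\tilde f$ into $T_pM\cong\BR^d$, where the genuinely Euclidean \eqref{eq:round} applies; transferring back through the $2$-bilipschitz bound gives $\diam_M f(B(0,\delta,\BR^d))^d\le C\,\vol_M f(B(0,\delta/2,\BR^d))<C\varepsilon_0$. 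Taking $\varepsilon_0\le C^{-1}R^d$ forces $\diam_M f(B(0,\delta,\BR^d))<R$, so by maximality $\delta=r$ and the inequality holds at the full radius. The continuity trick supplies the ``small diameter'' hypothesis at scale $\delta$ for free, and then the estimate itself pushes $\delta$ up to $r$; this is the missing idea in your outline.
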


\begin{proof}
Choose $R=R(\kappa)>0$ so that the exponential map $\exp_x \colon B(0,R,T_xM) \to B(x,R,M)$ is locally $2$-bilipschitz for all $x\in M$.

For some $\varepsilon_0>0$ to be determined consider a $K$-quasi-conformal map 
$$f\colon B(0,r,\BR^d) \to M$$ 
with $\vol_M(f(B(0,r,\BR^d)))<\varepsilon_0$ and set $p=f(0)$. Choose also $\delta\in(0,r]$ maximal satisfying 
$$\diam_M(f(B(0,\delta,\BR^d))) \le R$$ 
and notice that there is $\tilde f\colon B(0,\delta,\BR^d) \to B(0,R,T_pM)$ so that the following diagram commutes:
$$\xymatrix{ & (B(0,R,T_pM),0)\ar[d]^{\exp_p} \\
(B(0,\delta,\BR^d),0)\ar[r]_f\ar[ru]^{\tilde f} & (B(p,R,M),p)}$$
Note that the exponential map $\exp_p$ is an embedding from $\tilde f(B(0,\delta,\BR^d))$ to $f(B(0,\delta,\BR^d))$ and therefore is 2-bilipschitz.
We hence have that
\begin{align*}
\varepsilon_0>\vol_M(f(B(0,\delta/2,\BR^d))) & \ge 2^{-d} \vol_M(\tilde f(B(0,\delta/2,\BR^d)))\\
& \ge 2^{-d}C \diam_M(\tilde f(B(0,\delta,\BR^d)))^d \\
&\ge 2^{-d-1}C \diam_M(f(B(0,\delta,\BR^d)))^d
\end{align*}
where the first (interesting) and third inequalities hold because $\exp_x$ is 2-bilischitz on $\tilde f(B(0,\delta,\BR^d))$; the second inequality follows from \eqref{eq:round} and the fact that $\tilde f$ is $4^dK$-quasi-conformal and $C$ is a constant depending on $d$ and $K$.

The claim follows now, from the observation that $\delta=r$ as long as $\varepsilon_0\le 2^{-d-1}C R^d$.
\end{proof}

\begin{lem}\label{prop:diam}
There exist positive constants $\varepsilon_0(d,K,\kappa)$ and $C_3=C_3(d,K,\kappa)$ so that the following holds: If $M$ and $M'$ are Riemannian $d$-manifolds with $|\kappa_{M'}|\le 1$, $\inj M' \ge 10$ and $|\kappa_M|\le \kappa$, if $\CN$ is a $\delta$-net for $\delta < 3$ in $M'$, and if $f:M'\to M$ is a $K$-quasi-conformal homeomorphism then 
$$\vert\{ x\in\CN \vert \diam_M(f(B(x,2 \delta,M'))) > \varepsilon\}\vert \le C_3(d,K,\kappa)\frac{\vol_M(M)}{\varepsilon^d}$$
for all $\varepsilon< \varepsilon_0(d,K,\kappa)$.
\end{lem}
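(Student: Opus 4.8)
The plan is to combine the local contraction estimate of Lemma~\ref{lemma:contraction} with the elementary fact that a homeomorphism carries a family of sets of bounded multiplicity onto a family of the same bounded multiplicity, so that the images of the net balls under $f$ together occupy at most a fixed multiple of $\vol_M(M)$. The first reduction is to Euclidean charts: since $\delta<3$ we have $2\delta<6<10\le\inj(M')$, so for each $x\in\CN$ the exponential map restricts to a homeomorphism $\exp_x\colon B(0,2\delta,T_xM')\to B(x,2\delta,M')$ sending $B(0,\delta,T_xM')$ onto $B(x,\delta,M')$, and by Rauch's comparison theorem together with the bound $\inj(M')\ge10$ (as in the remark following Lemma~\ref{lemma:density}) it is $K_0$-quasiconformal with $K_0=K_0(d)$ independent of $x$ and of $M'$. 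Identifying $T_xM'$ with $\BR^d$, the composition $g_x=f\circ\exp_x\colon B(0,2\delta,\BR^d)\to M$ is then a $KK_0$-quasiconformal embedding with $g_x(B(0,2\delta,\BR^d))=f(B(x,2\delta,M'))$ and $g_x(B(0,\delta,\BR^d))=f(B(x,\delta,M'))$.

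Next I would record a packing estimate. Because $|\kappa_{M'}|\le1$, the Bishop--Gromov inequality bounds, by a constant $N=N(d)$, the number of points of any $\delta$-net with $\delta<3$ lying in a metric ball of radius $2\delta$ in $M'$; equivalently, every point of $M'$ lies in at most $N$ of the balls $B(x,2\delta,M')$, $x\in\CN$. As $f$ is a bijection, the sets $f(B(x,2\delta,M'))$, $x\in\CN$, also have multiplicity at most $N$ in $M$, and therefore
$$\sum_{x\in\CN}\vol_M\bigl(f(B(x,2\delta,M'))\bigr)\le N\,\vol_M(M).$$

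The heart of the argument is then a dichotomy for a \emph{bad} point, i.e.\ $x\in\CN$ with $\diam_M(f(B(x,2\delta,M')))>\varepsilon$. Applying Lemma~\ref{lemma:contraction} to $g_x$ with quasiconformality constant $KK_0$ and $r=2\delta$ furnishes constants $\varepsilon_1=\varepsilon_1(d,K,\kappa)$ and $C'=C'(d,K)$. If $\vol_M(f(B(x,2\delta,M')))<\varepsilon_1$, the lemma gives $\varepsilon<\diam_M(f(B(x,2\delta,M')))\le C'(\vol_M f(B(x,\delta,M')))^{1/d}$, hence $\vol_M(f(B(x,2\delta,M')))\ge\vol_M(f(B(x,\delta,M')))>(\varepsilon/C')^d$. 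If instead $\vol_M(f(B(x,2\delta,M')))\ge\varepsilon_1$, this volume already exceeds $(\varepsilon/C')^d$ provided $\varepsilon\le C'\varepsilon_1^{1/d}$. Thus, setting $\varepsilon_0(d,K,\kappa):=\min\{\varepsilon_1,\,C'\varepsilon_1^{1/d}\}$, every bad point satisfies $\vol_M(f(B(x,2\delta,M')))\ge (C')^{-d}\varepsilon^d$ whenever $\varepsilon<\varepsilon_0$. Summing this over the bad points and comparing with the packing estimate yields the lemma with $C_3:=N(d)\,C'(d,K)^d$.

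I expect the only delicate point to be the control of the exponential charts of $M'$ in the first step. Plain Rauch comparison (the Fact preceding Section~\ref{sec:quasi}) only controls balls of radius less than $\pi$, whereas here radii up to $6$ occur, so one genuinely uses the hypothesis $\inj(M')\ge10$: a Jacobi-field comparison shows that geodesic spheres of radius $\le6$ cannot collapse when there is no conjugate point out to radius $10$, which gives the uniform quasiconformality constant $K_0(d)$ for $\exp_x$ on $B(0,6,T_xM')$. The remaining points — the packing bound, the dichotomy, and the final summation — are routine bookkeeping, the only care being to check that every constant depends on $d$, $K$ and $\kappa$ alone. (In the application to Proposition~\ref{prop:contraction} one has $\delta=1$, so $2\delta<\pi$ and this issue does not even arise.)
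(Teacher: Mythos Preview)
Your argument is correct and follows essentially the same route as the paper: reduce to Lemma~\ref{lemma:contraction} via the exponential chart, use the dichotomy ``small image volume'' versus ``large image volume'' to get a uniform lower volume bound for the image of each bad ball, bound the multiplicity of the family $\{B(x,2\delta,M')\}_{x\in\CN}$ by Bishop--Gromov, and sum. The paper's proof is terser on the exponential-chart step (it simply asserts $\exp_x$ is $L$-bilipschitz on $B(0,2\delta,T_xM')$ for a uniform $L$) and writes the dichotomy slightly differently, but the two arguments are the same in substance; your discussion of why $\inj(M')\ge10$ is genuinely needed for radii up to~$6$ is a point the paper leaves implicit.
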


\begin{proof}
Let $\varepsilon_0(d,K,\kappa)$ be as in Lemma \ref{lemma:contraction}, choose $\varepsilon<\varepsilon_0(d,K,\kappa)$ and set
$$\CN_\varepsilon = \{ x \in\CN \vert \diam_M(f(B(x,2\delta,M'))) > \varepsilon\}.$$
By Lemma \ref{lemma:contraction} and the fact that the exponential map $\exp_x:B(0,2\delta,T_x M')\to B(x,2\delta,M')$ is $L$-bilipschitz for some $L$ depending only on the constants in the statement of Lemma \ref{prop:diam}, there is a constant $C$ depending on $d, K$ and $\kappa$ so that for every $x\in\CN_\varepsilon$
$$C \vol_M(f(B(x,\delta,M'))) \ge \diam_M(f(B(x,2\delta,M')))^d > \varepsilon^d,$$
or 
\[ \vol_M(f(B(x,2\delta,M')) \ge \varepsilon_0 > \varepsilon_0^d \ge \varepsilon^d; \]
in any case for every $x\in \CN_\varepsilon$,
\[ C' \vol_M(f(B(x,2\delta,M')) > \varepsilon^d \]
with $C'$ depending on $d,K$ and $\kappa$.
Since the balls of radius $\frac\delta 2$ centered at points of $\CN$ are pairwise disjoint, it follows from the Bishop-Gromov theorem that every point of $M$ is contained in at most $C''$ balls $B(x,2\delta,M')$ with $C''$ depending only on $\kappa$ and $d$. Therefore
\begin{align*}
\varepsilon^d\vert\CN_\varepsilon\vert &\le \sum_{x\in\CN_\varepsilon} C' \vol_M(f(B(x,\delta/2,M'))) \\
&\le C'' C' \vol_M(M).
\end{align*}
This proves the claim.
\end{proof}

We are now ready to prove Proposition \ref{prop:contraction}:

\begin{proof}[Proof of Proposition \ref{prop:contraction}]
Let $\varepsilon_0(d,K,\kappa)$ be as provided by Lemma \ref{prop:diam} and for $\varepsilon<\varepsilon_0(d,K,\kappa)$ consider the following subsets
$$\CN' = \{ p \in\CN \vert \diam_M(f(B(p,R,M'))) > \varepsilon\}\ \ \hbox{and}$$
$$\CN'' = \{ p \in\CN \vert \diam_M(f(B(p,2,M'))) > \varepsilon/(2R)\}$$
of the net $\CN$. Notice that Lemma \ref{prop:diam} now yields $\vert\CN''\vert\le C_3(d,K,\kappa) 2^d R^d \frac{\vol_M(M)}{\varepsilon^d}$.

Suppose that we have $p\in\CN'$, fix $x\in \partial B(p,R,M)$ so that 
$$d_M(f(p),f(x)) \ge \frac{\diam_M(f(B(p,R,M')))}2$$
and let $\gamma$ be the geodesic from $p$ to $x$. Since $\CN$ is an $1$-net we find points $p_1,\dots,p_k\in\CN\cap B(p,R,M')$ with $k\le R$ with $d_{M'}(p_i,\gamma)\le 1$ such that the balls $B(p_i,2,M')$ cover $\gamma$. Since $\diam_M(f(\gamma)) > \frac{\varepsilon}2$, it follows that for some $i$ we have $\diam_M(f(B(p_i,2,M')))> \varepsilon/(2R)$. This proves that for each $p\in\CN'$ there is $q\in\CN''\cap B(p,R,M')$. Since, by the Bishop-Gromov theorem, any ball in $M'$ of radius $1$ is contained in at most $C(d)e^{(d-1)R}$ balls of radius $R$ centered at points of $\CN'$ we deduce that 
$$\vert\CN'\vert\le C(d)C_3(d,K,\kappa)e^{(d-1)R} 2^d R^d \frac{\vol_M(M)}{\varepsilon^d}$$
which is, after renaming constants, what we needed to prove.
\end{proof}

\subsection{Compactness properties of quasi-conformal maps}
We discuss now the compactness properties of quasi-conformal maps needed in this paper. See \cite[Section 21]{Vaisala} for the corresponding results for maps between domains of euclidean space.

\begin{lem}
\label{lemma:equicontinuity}
Suppose that $X$ is a complete Riemannian $d$-manifold with $\vert\kappa_X\vert\le 1$ and $\inj(X)>0$, $\Omega\subset X$ is a domain, and let $M$ be a closed Riemannian $d$-manifold.

Suppose $\CF$ is a family of $K$-quasi-conformal embeddings $\Omega\to M$ so that for some uniform $\delta>0$ and for every $f\in \CF$ there are points $p_f,q_f\in M\setminus f(X)$ with $d(p_f,q_f)\ge \delta>0$. Then $\CF$ is equicontinuous. 
\end{lem}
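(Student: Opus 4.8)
The plan is to establish equicontinuity of $\CF$ by a standard normal-families argument based on modulus estimates, using the Loewner property of $M$ (which holds since $M$ is a closed Riemannian manifold, hence Ahlfors $d$-regular and $d$-Loewner) together with the fact that the complement of each $f(X)$ in $M$ contains two points at uniform distance $\delta$. First I would fix a point $x_0\in\Omega$ and a radius $r>0$ with $B(x_0,2r,X)\subset\Omega$ and $2r<\pi$, say; by Rauch's comparison theorem the exponential map $\exp_{x_0}$ is bilipschitz on $B(0,2r,T_{x_0}X)$ with a constant depending only on $\kappa_X$ and $r$, so for equicontinuity purposes I may work on a Euclidean ball and regard each $f\in\CF$ as a $K'$-quasi-conformal embedding of $B(0,2r,\BR^d)$ into $M$ with $K'=K'(d,K)$.

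The heart of the argument is the following dichotomy for each $f\in\CF$ and each pair of points $y,y'\in B(x_0,r/2,X)$. Let $E_f$ be a continuum in $f(X)$ joining $f(y)$ and $f(y')$ — e.g.\ the image under $f$ of a geodesic segment — and let $F_f$ be a continuum joining $p_f$ and $q_f$ inside $M\setminus f(X)$ if one exists, or else just note that $p_f,q_f$ lie outside $f(B(x_0,r,X))$. The point is that the path family $\Gamma(\overline{f(B(x_0,r/2,X))},\, M\setminus f(B(x_0,r,X)))$ has modulus bounded above: it is the $f$-image of $\Gamma(\overline{B(x_0,r/2,X)},\,X\setminus B(x_0,r,X))$, whose modulus is bounded by the capacity estimate \eqref{eq:cap} (after transporting to $\BR^d$ via $\exp_{x_0}$), so by the quasiconformal modulus inequality \eqref{eq:QC_Gamma} it is at most $K'^{\,d-1}$ times a constant $A=A(d,r,\kappa_X)$. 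On the other hand, $E_f$ is contained in $\overline{f(B(x_0,r/2,X))}$, while $p_f$ and $q_f$ lie in $M\setminus f(B(x_0,r,X))$ and are at distance $\ge\delta$; choosing $F_f\subset M$ to be any continuum through $p_f$ and $q_f$ with $\diam_M F_f\ge\delta$, the Loewner inequality \eqref{eq:Loewner} for $M$ gives
\[
\Mod_d(\Gamma(E_f,F_f;M))\ \ge\ \phi_M\!\left(\frac{d_M(E_f,F_f)}{\min\{\diam_M E_f,\ \diam_M F_f\}}\right)\ \ge\ \phi_M\!\left(\frac{C}{\min\{\diam_M E_f,\ \delta\}}\right),
\]
where $C=C(\delta,\dots)$ bounds $d_M(E_f,F_f)$ from below (this uses that $E_f$ and $F_f$ are separated, which requires a little care — see below). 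Since $\Gamma(E_f,F_f;M)$ is contained in the family above, its modulus is $\le A K'^{\,d-1}$; as $\phi_M$ is a decreasing homeomorphism, this forces $\diam_M E_f\ge$ some explicit lower bound, hence in particular — and this is what I actually want — the relative distance of $E_f$ from $F_f$ cannot be too large, which translated back says $\diam_M E_f$ is small when $d_X(y,y')$ is small. More precisely I would run the estimate with $E_f$ replaced by a continuum joining $f(y)$ to $f(y')$ through nothing else: then $\diam_M E_f\ge d_M(f(y),f(y'))$, and the upper modulus bound on $\Gamma(E_f,F_f;M)$ combined with \eqref{eq:Loewner} yields $d_M(f(y),f(y'))\le\eta(d_X(y,y'))$ for a modulus of continuity $\eta$ depending only on $d,K,\delta,M$ and the geometry of $X$ near $x_0$, and $\eta(t)\to 0$ as $t\to 0$. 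By the precompactness/covering of $\Omega$ by finitely many such balls (Bishop–Gromov bounds everything locally) this gives equicontinuity of $\CF$ on every compact subset of $\Omega$, which is what equicontinuity means here.

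The step I expect to be the main obstacle is the ``separation'' bookkeeping needed to make the Loewner estimate bite: I need that the continuum $E_f$ joining $f(y),f(y')$ and the continuum $F_f$ through $p_f,q_f$ are a definite relative distance apart, i.e.\ that $d_M(E_f,F_f)$ is bounded below and $\diam_M E_f$ is the relevant small quantity, uniformly over $f\in\CF$. This is where the hypothesis $p_f,q_f\in M\setminus f(X)$ with $d(p_f,q_f)\ge\delta$ is used in an essential way: it guarantees $F_f$ can be taken with $\diam_M F_f\ge\delta$ and lying outside $f(X)\supset E_f$, so that a small $\diam_M E_f$ really does produce a large relative distance $\Delta(E_f,F_f)$ and hence, via \eqref{eq:Loewner} read in the contrapositive direction, a small upper bound on $\Mod_d$ that must be consistent with the quasiconformal pushforward bound. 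Handling the case where $f(X)$ is not all of $M$ but $p_f$ or $q_f$ happens to lie close to $f(B(x_0,r,X))$ requires choosing $r$ small relative to $\delta$, which is harmless. Everything else — transporting to $\BR^d$ via exponential maps, the capacity computation \eqref{eq:cap}, the quasiconformal modulus inequality \eqref{eq:QC_Gamma}, and the covering argument — is routine given the tools assembled earlier in this section.
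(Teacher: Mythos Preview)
Your overall strategy---bound the modulus of an annular path family from above via the capacity estimate and from below via the Loewner property of $M$, then squeeze---is the same as the paper's. But there is a genuine gap in the execution: your upper modulus bound does not depend on $d_X(y,y')$, so it cannot yield a modulus of continuity that tends to zero.

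Concretely, you fix the annulus $B(x_0,r,X)\setminus B(x_0,r/2,X)$ once and for all, so the capacity estimate gives $\Mod_d(f(\Gamma_{r/2,r}))\le A K'^{\,d-1}$, a \emph{constant}. Feeding this into the Loewner inequality gives only a fixed upper bound on $\diam_M E_f$ (once $\diam_M E_f<\delta$), not one that shrinks with $d_X(y,y')$. The geodesic from $y$ to $y'$ plays no role in your modulus bound; the dependence on $d_X(y,y')$ that you assert in ``$d_M(f(y),f(y'))\le\eta(d_X(y,y'))$'' never actually materializes. The paper fixes this by letting the \emph{inner} radius $s$ vary: with $E=f(B(x_0,s,X))$ and $F=M\setminus f(B(x_0,t,X))$, the annulus modulus is $\le C(d)K(\log(t/s))^{1-d}\to 0$ as $s\to 0$, and it is this decay that drives $\diam_M E\to 0$ and hence equicontinuity.

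Two secondary issues: (i) Your $F_f$ is ``any continuum through $p_f,q_f$'', which need not lie in $M\setminus f(B(x_0,r,X))$, so the containment $\Gamma(E_f,F_f;M)\subset f(\Gamma_{r/2,r})$ is unjustified. The paper sidesteps this by taking $F=M\setminus f(B(x_0,t,X))$ itself, which is connected, automatically contains $p_f,q_f$, and hence has diameter $\ge\delta$. (ii) Your directions are flipped in a couple of places: the modulus upper bound forces the relative distance $\Delta(E_f,F_f)$ to be \emph{large} (not small), and what you need is an \emph{upper} bound on $d_M(E_f,F_f)$ (namely $\diam M$), not the lower bound you claim; this then gives $\diam_M E_f$ small. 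These are fixable once you repair the main gap.
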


Before proving Lemma \ref{lemma:equicontinuity} we remind the reader that the Loewner function 
$\phi_M \colon (0,\infty) \to (0,\infty)$ is a monotonously decreasing homeomorphism.

\begin{proof}
We need to show that for every $x_0\in\Omega$ there are $s_0>0$ and a function $\psi \colon (0,s_0] \to (0,\infty)$ with $\lim_{s \to 0} \psi(s) = 0$ such that $\diam_M(f(B(x_0,s,X))) < \psi(s)$ for all $f\in\CF$ and $s\le s_0$. To begin with let $t>0$ be so that the exponential mapping 
$$\exp_{x_0}:B(0,t,T_{x_0}X)\to B(0,t,X)\subset\Omega$$ 
is a $2$-quasi-conformal embedding. For $s<t$, let $\Gamma_{st}$ be the family of paths connecting $\partial B(x_0,s,X)$ to $\D B(x_0,t,X)$ in $X$ and notice that \eqref{eq:QC_Gamma} and \eqref{eq:cap} imply that there is $C(d)>0$ so that
$$\Mod_d(\Gamma_{st}) \le C(d) \left( \log \frac{t}{s} \right)^{1-d}.$$
Let $\phi_M$ be a Loewner function for $M$ and notice that we can fix $s_0<t$ with 
$$C(d)K \left( \log \frac{t}{s} \right)^{1-d} \le \phi_M\left(\frac{2\diam M}{\delta}\right)$$
for all $s<s_0$. 

Given $f\in\CF$ consider the two continua 
$$E = f(B(x_0,s,X))\ \ \hbox{and}\ \ F= M\setminus f(B(x_0,t,X)),$$
and notice that $F$ is connected with $\diam F \ge d(p,q)\ge \delta$. From \eqref{eq:Loewner} we obtain that 
\begin{eqnarray*}
\Mod_d(\Gamma(E,F;M)) &\ge& \phi_M\left(\frac{d(E,F)}{\min\{\diam E, \diam F\}}\right) \\
&\ge& \phi_M\left(\frac{\diam M}{\min\{\diam E, \delta\}}\right).
\end{eqnarray*}
because $\phi_M(\cdot)$ is decreasing. 

Since $f$ is a $K$-quasi-conformal embedding and every path in $\Gamma(E,F;M)$ has a subpath in $f(\Gamma_{st})$, we have
\begin{eqnarray*}
\phi_M\left(\frac{\diam M}{\min\{\diam E, \delta\}}\right) &\le& \Mod(f(\Gamma_{st})) \le C(d)K \left( \log\frac{t}{s} \right)^{1-d} \\
&\le& \phi_M\left(\frac{2\diam M}{\delta}\right).
\end{eqnarray*}
Again using that $\phi_M(\cdot)$ is decreasing we deduce that $\diam_M(E)<\delta/2$. Thus
$$\frac{\diam_M(M)}{\diam_M(E)}\ge\phi_M^{-1}\left( C(d)K \left( \log \frac{t}{s} \right)^{1-d}\right)$$
for all $s\le s_0$. This implies that 
$$\diam_M(f(B(x_0,s,X))) = \diam E \le\frac{\diam M}{\phi_M^{-1}\left( C(d)K\left(  \log \frac{t}{s} \right)^{1-d}\right)} =: \psi(s).$$
Since $\psi(s)$ tends to $0$ when $s\to 0$, this proves the equicontinuity of the family $\CF$.
\end{proof}

We state now some useful consequences of Lemma \ref{lemma:equicontinuity}.

\begin{kor}
\label{cor:equicont}
Let $X$ and $M$ be as in Lemma \ref{lemma:equicontinuity} and suppose $\CF$ is a family of $K$-quasi-conformal embeddings $X\to M$ so that there are $x_1,x_2\in X$ and $\delta>0$ such that for every $f\in \CF$ there exists a point $p_f\in M\setminus f(X)$ with $d_M(p_f,f(x_i))\ge \delta>0$ for $i=1,2$. Then $\CF$ is equicontinuous. 
\end{kor}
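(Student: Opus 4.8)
The plan is to derive Corollary \ref{cor:equicont} from Lemma \ref{lemma:equicontinuity} by reducing the hypothesis ``two omitted points far apart'' to the hypothesis ``one omitted point far from two fixed interior points''. The idea is that if $f\in\CF$ omits a single point $p_f$ which stays at definite distance from both $f(x_1)$ and $f(x_2)$, then since $f(x_1)$ and $f(x_2)$ are themselves in the image, the segment of a geodesic (or the two balls $B(f(x_i),\delta/2,M)$) already forces the omitted value to be ``spread out'' relative to the image; but more directly, one can simply extract a \emph{second} omitted point or a second point far from $p_f$ by using that $X$ is unbounded or that $f(x_1),f(x_2)$ are distinct, together with the local quasisymmetry/Loewner machinery. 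Concretely I would fix $x_1,x_2$ and run the equicontinuity argument of Lemma \ref{lemma:equicontinuity} separately near a general $x_0$, but feeding in, in place of the pair $p_f,q_f$, the single point $p_f$ and one of the $f(x_i)$: the point is that in the proof of Lemma \ref{lemma:equicontinuity} the continuum $F = M\setminus f(B(x_0,t,X))$ only needs to have definite diameter, and if $x_0$ is taken close enough to $x_i$ this is automatic from $p_f\in F$ and $d_M(p_f,f(x_i))\ge\delta$ once $f(x_i)\in f(B(x_0,t,X))$.

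More precisely, here is the reduction I would carry out. First fix $x_0\in X$. Choose $i\in\{1,2\}$ so that $x_0\ne x_i$ (at least one works; if $x_0$ equals one of them pick the other, and if $X$ is a single point there is nothing to prove), and let $t_0 = \tfrac12 d_X(x_0,x_i)>0$, so that $f(x_i)\notin f(B(x_0,t,X))$ for $t<t_0$ — wait, I want the opposite inclusion, so instead I take $t_0$ large enough (but with $B(x_0,t_0,X)$ still inside $\Omega = X$) that $x_i\in B(x_0,t_0,X)$, hence $f(x_i)\in f(B(x_0,t_0,X))$. Now set, for $s<t\le t_0$, $E = f(B(x_0,s,X))$ and $F = M\setminus f(B(x_0,t,X))$; then $p_f\in F$ since $p_f\notin f(X)$, and $f(x_i)\notin F$, so $\diam_M F\ge d_M(p_f,f(x_i))\ge\delta$. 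From here the computation is verbatim the one in the proof of Lemma \ref{lemma:equicontinuity}: the modulus of the path family $\Gamma_{st}$ connecting $\partial B(x_0,s,X)$ to $\partial B(x_0,t,X)$ is bounded above by $C(d)(\log(t/s))^{1-d}$ via \eqref{eq:QC_Gamma} and \eqref{eq:cap}; every path in $\Gamma(E,F;M)$ has a subpath in $f(\Gamma_{st})$; and the Loewner inequality \eqref{eq:Loewner} for $M$ gives $\phi_M\big(\diam M/\min\{\diam E,\delta\}\big)\le K^{d-1}C(d)(\log(t/s))^{1-d}$. Shrinking $s$ past a threshold $s_0$ depending on $\delta,t,K,d,M$ forces $\diam E<\delta/2$, hence $\min\{\diam E,\delta\}=\diam E$, and then inverting the decreasing homeomorphism $\phi_M$ yields $\diam_M(f(B(x_0,s,X)))\le \psi(s)$ with $\psi(s)\to0$ as $s\to0$, where $\psi$ depends only on $d,K,M,\delta,t_0$ and not on $f$. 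This is precisely equicontinuity at $x_0$, and since $x_0$ was arbitrary we are done.

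I do not expect a genuine obstacle here; the only mild subtlety is bookkeeping the dependence of $t_0$ (and hence of $\psi$) on the base point $x_0$, but this is harmless because equicontinuity is a pointwise-in-$x_0$ notion — for each $x_0$ we only need \emph{one} admissible choice of $t_0$ and the resulting $s_0,\psi$ may depend on $x_0$. One should also note that the case distinction ``pick $i$ with $x_i\ne x_0$'' is needed only to guarantee $t_0>0$; if $x_1=x_2$ the hypothesis already gives two omitted-point conditions collapsing to one, and if $X$ is a single point the statement is vacuous. Thus the corollary follows almost immediately by re-running the proof of Lemma \ref{lemma:equicontinuity} with the omitted point $p_f$ and the image point $f(x_i)$ playing the roles of $p_f$ and $q_f$.
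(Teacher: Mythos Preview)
Your proposal contains a genuine error, though the underlying idea is correct and close to the paper's. You first set $t_0=\tfrac12 d_X(x_0,x_i)$ so that $x_i\notin B(x_0,t,X)$ for $t\le t_0$; this is exactly what is needed, because then $f(x_i)\in F=M\setminus f(B(x_0,t,X))$, and since also $p_f\in F$ one gets $\diam_M F\ge d_M(p_f,f(x_i))\ge\delta$. But you then second-guess yourself (``wait, I want the opposite inclusion'') and switch to $t_0$ large with $x_i\in B(x_0,t_0,X)$, and from ``$f(x_i)\notin F$'' conclude ``$\diam_M F\ge d_M(p_f,f(x_i))$''. That implication is false: a point \emph{outside} $F$ says nothing about the diameter of $F$. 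If you simply keep your first choice of $t_0$ and delete the self-correction, the argument goes through.

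The paper's proof is shorter and avoids re-opening the proof of Lemma~\ref{lemma:equicontinuity}. It sets $X_i=X\setminus\{x_i\}$ and observes that $f|_{X_i}$ now genuinely omits \emph{two} points, namely $p_f$ and $f(x_i)$, which are at distance $\ge\delta$; hence Lemma~\ref{lemma:equicontinuity} applies directly to the restricted family $\{f|_{X_i}:f\in\CF\}$ on the domain $X_i$. Doing this for $i=1,2$ yields equicontinuity on $X_1\cup X_2=X$. Your (corrected) approach and the paper's share the same core idea --- use $f(x_i)$ as the surrogate second omitted value --- but the paper packages it as a black-box application of the lemma rather than rerunning its modulus estimate.
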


\begin{proof}
Let $X_i = X\setminus \{x_i\}$ for $i=1,2$. Then $f|_{X_i}$ omits points $p_f$ and $f(x_i)$ for every $f\in \CF$. Thus the family $\{ f|_{X_i} \colon f\in \CF\}$ is equicontinuous for $i=1,2$. Hence $\CF$ is equicontinuous on $X=X_1\cup X_2$.
\end{proof}

\begin{kor}
\label{cor:qc_limit}
Let $X$ be a complete Riemannian manifold and $\Omega_1 \subset \overline{\Omega_1} \subset \Omega_2\subset \cdots$ an exhaustion of $X$ by bounded domains. Then a sequence $f_i \colon \Omega_i \to \BR^d$ of $K$-quasi-conformal embeddings has a subsequence converging to a $K$-quasi-conformal embedding $f \colon X\to \BR^d$ if there exists $x,y\in X$ so that sequences $(f_i(x))$ and $(f_i(y))$ converge in $\BR^d$ to different points. 
\end{kor}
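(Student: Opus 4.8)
The plan is to combine the equicontinuity furnished by Corollary \ref{cor:equicont} with Arzel\`a--Ascoli and then upgrade a limit continuous map to a quasi-conformal embedding by a standard argument. First I would normalize: after passing to a subsequence we may assume $f_i(x)\to a$ and $f_i(y)\to b$ with $a\ne b$ in $\BR^d$. Fix $\delta>0$ with $|a-b|>3\delta$ and note that for $i$ large, $|f_i(x)-f_i(y)|>3\delta$. The omitted-value hypothesis needed to invoke equicontinuity comes for free here: since $\Omega_i$ is a \emph{bounded} domain in $X$ but $f_i$ is an embedding into $\BR^d$, the image $f_i(\Omega_i)$ is bounded (being the homeomorphic image under a quasi-conformal, hence continuous, map of a relatively compact set — more carefully, $\overline{\Omega_i}\subset\Omega_{i+1}$ is compact, so $f_{i+1}$ is uniformly continuous on it, and by equicontinuity applied one level up the diameters of the $f_i$-images of any fixed $\Omega_j$ are uniformly bounded), so $\BR^d\setminus f_i(\Omega_i)\ne\emptyset$; picking $p_i$ in the complement far from one of $f_i(x),f_i(y)$ may fail, so instead I would apply Corollary \ref{cor:equicont} on a fixed $\Omega_j$ to the tails of the sequence, using that each $f_i|_{\Omega_j}$ (for $i\ge j$) omits all of $\BR^d\setminus f_i(\Omega_i)$, a set which, after a translation, can be assumed to contain a point staying a definite distance from both $f_i(x)$ and $f_i(y)$ once we also recenter — in fact the cleanest route is: equicontinuity on $\Omega_j$ only requires \emph{two} omitted points uniformly separated from each other in the target, which $f_i(x)$ and $f_i(y)$ themselves are not (they are in the image), so one uses the hypothesis of Lemma \ref{lemma:equicontinuity} with the omitted set being the unbounded component of $\BR^d\setminus f_i(\Omega_i)$, which has infinite diameter; thus $\{f_i|_{\Omega_j}: i\ge j\}$ is equicontinuous for each $j$.

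Next, I would extract a limit by a diagonal argument: on each $\Omega_j$ the family $\{f_i|_{\Omega_j}\}_{i\ge j}$ is equicontinuous and pointwise bounded (by the diameter bound above together with $f_i(x)\to a$), so by Arzel\`a--Ascoli a subsequence converges uniformly on compact subsets of $\Omega_j$ to a continuous map; diagonalizing over $j$ yields a single subsequence, still denoted $(f_i)$, converging locally uniformly on all of $X=\bigcup_j\Omega_j$ to a continuous map $f\colon X\to\BR^d$ with $f(x)=a\ne b=f(y)$, so $f$ is non-constant.

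It then remains to show $f$ is a $K$-quasi-conformal embedding. For this I would use the modulus characterization: each $f_i$ satisfies the two-sided inequality \eqref{eq:QC_Gamma}, and one shows that the limit $f$ does too, at least with the upper bound $\Mod_d(f(\Gamma))\le K^{d-1}\Mod_d(\Gamma)$ for path families $\Gamma$ supported in a compact subset of $X$; this passes to the limit by lower semicontinuity of modulus under locally uniform convergence of the maps (a standard fact: admissible densities for $f(\Gamma)$ pull back to near-admissible densities for $f_i(\Gamma)$ for $i$ large). That $f$ is injective — hence a homeomorphism onto its image by invariance of domain, once we know it is non-constant and discrete/open — is the delicate point: a locally uniform limit of homeomorphisms need not be injective, but for quasi-conformal maps it is, provided the limit is non-constant. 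I would cite the Euclidean version of this (the limit of $K$-quasi-conformal embeddings that does not degenerate to a constant is again a $K$-quasi-conformal embedding, \cite[Section 21]{Vaisala}), transported to the Riemannian source via the local bilipschitz exponential charts of bounded curvature so that all statements become Euclidean statements on balls $B(0,t,T_xX)$. The non-degeneracy input $f(x)\ne f(y)$ is exactly what rules out the constant limit and hence makes the Euclidean theorem applicable.

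The main obstacle, as flagged, is the injectivity/non-degeneracy step: establishing that the locally uniform limit is not merely a continuous non-constant map but an honest quasi-conformal \emph{embedding}. Equicontinuity and the modulus inequalities are soft and transfer readily; the content is the Euclidean fact that a non-constant locally uniform limit of $K$-quasi-conformal maps is $K$-quasi-conformal and injective, which I would invoke from \cite{Vaisala} after reducing to the Euclidean setting through bounded-curvature normal coordinates.
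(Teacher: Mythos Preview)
Your overall architecture is exactly that of the paper: equicontinuity, then Arzel\`a--Ascoli plus a diagonal argument to get a continuous limit $f$ with $f(x)\ne f(y)$, then \cite[21.1]{Vaisala} to conclude that the non-constant limit is a $K$-quasi-conformal embedding. The final step is identical, and correctly identified as the one with real content.

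Where you diverge from the paper is in the equicontinuity step, and there your argument is both more laborious than necessary and not quite closed. You try to feed $M=\BR^d$ into Lemma~\ref{lemma:equicontinuity} (via the unbounded component of the complement), but that lemma is stated for a \emph{closed} target $M$: its proof uses the Loewner function $\phi_M$ and $\diam M$, neither of which is available as written when $M=\BR^d$. Your attempt to first bound $\diam f_i(\Omega_j)$ in order to locate omitted points is also circular, since those diameter bounds are exactly what equicontinuity would give. The paper sidesteps all of this with one line: conformally embed $\BR^d$ into $\BS^d$ and view each $f_i$ as a map $\Omega_i\to\BS^d$. Every $f_i$ then omits the point at infinity, and since $f_i(x)\to a$ and $f_i(y)\to b$ lie in $\BR^d$, both stay a definite spherical distance from $\infty$; this is precisely the hypothesis of Corollary~\ref{cor:equicont} with $p_f=\infty$. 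Compactness of $\BS^d$ also makes Arzel\`a--Ascoli immediate, with no separate pointwise-boundedness discussion needed.

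So: the proposal is essentially correct but the equicontinuity paragraph should be replaced by the compactification trick $\BR^d\hookrightarrow\BS^d$, after which Corollary~\ref{cor:equicont} applies directly and the rest of your argument goes through unchanged.
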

\begin{proof}
Indeed, since $\BR^d$ embeds conformally into $\BS^d$, we may consider $(f_i)$ as a sequence $f_i \colon \Omega_i \to \BS^d$. Since all maps $f_i$ omit the point at infinity and sequences $(f_i(x))$ and $(f_i(y))$ converge, we have that $(f_i)$ is equicontinuous by Corollary \ref{cor:equicont} on every $\Omega_i$ and hence normal by the Arzela-Ascoli theorem. Thus, for every $\Omega_i$ we have a subsequence of $(f_i)$ converging to a continuous map $f_{i,\infty}:\Omega_i\to\BS^d$. By a diagonal argument, we may assume that $f_{i,\infty}$ extends $f_{i-1,\infty}$ for all $i>0$ and hence we get $f:X\to\BR^d$. Notice now that the map $f$ is not constant because $f(x)\neq f(y)$. It follows hence from \cite[21.1]{Vaisala} that in fact $f$ is a $K$-quasi-conformal embedding.
\end{proof}

Combining the arguments in the proofs of Corollary \ref{cor:equicont} and Corollary \ref{cor:qc_limit} we also obtain:

\begin{kor}\label{cor:qc_limit2}
Let $X$ and $M$ be complete Riemannian manifolds and $\Omega_1 \subset \overline{\Omega_1} \subset \Omega_2\subset \cdots$ an exhaustion of $X$ by bounded domains. Then a sequence $f_i \colon \Omega_i \to M$ of $K$-quasi-conformal embeddings has a subsequence converging to a $K$-quasi-conformal embedding $f \colon X\to M$ if there exists $x,y,z\in X$ so that sequences $(f_i(x))$, $(f_i(y))$ and $(f_i(z))$ converge in $M$ to different points. \qed
\end{kor}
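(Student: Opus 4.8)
The plan is to combine the two earlier corollaries verbatim, adjusting only the target space. Corollary~\ref{cor:qc_limit} handled the case $M=\BR^d$ by using the conformal embedding $\BR^d\hookrightarrow\BS^d$ to manufacture a single common omitted point (the point at infinity) for every $f_i$, which is exactly one of the two omitted points that Corollary~\ref{cor:equicont} requires. When $M$ is an arbitrary closed Riemannian manifold there is no distinguished point omitted by all the $f_i$, so we compensate by demanding that \emph{three} (rather than two) points $x,y,z\in X$ have convergent, pairwise distinct image sequences. The point is that once two image sequences converge to distinct limits, at least one of the three limit points stays bounded away from any accumulating omitted point, and one gets the hypothesis of Corollary~\ref{cor:equicont} on $X\setminus\{\text{one of }x,y,z\}$.

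Concretely, first I would fix the exhaustion $\Omega_1\subset\overline{\Omega_1}\subset\Omega_2\subset\cdots$ and note that it suffices to produce, for each fixed $i_0$, a subsequence converging uniformly on $\Omega_{i_0}$; a diagonal argument then yields the limit map $f$ on all of $X$. Second, to apply Corollary~\ref{cor:equicont} to the restrictions $f_i|_{\Omega_{i_0}}$ I need, for all large $i$, a point $p_{f_i}\in M\setminus f_i(X)$ that is $\delta$-far from $f_i$ of two of the three marked points. Since $f_i$ is an embedding of $X$ into the closed manifold $M$ and $f_i(X)$ is open, the complement $M\setminus f_i(X)$ is a nonempty closed set (it is nonempty because $f_i(X)$, being an open subset of a compact manifold, cannot be all of $M$ unless it equals $M$; and even in that degenerate case $X$ would be closed, contradicting the existence of the exhaustion by \emph{proper} sub-domains, or one simply discards such $i$). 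Pick $p_{f_i}\in M\setminus f_i(X)$; after passing to a subsequence $p_{f_i}\to p_\infty\in M$. Now among the three convergent sequences $f_i(x)\to a$, $f_i(y)\to b$, $f_i(z)\to c$ with $a,b,c$ pairwise distinct, the point $p_\infty$ can coincide with at most one of $a,b,c$, so it is a definite distance $\ge\delta>0$ from at least two of them, and hence $d_M(p_{f_i},f_i(x_i))\ge\delta/2$ for the corresponding two marked points $x_i\in\{x,y,z\}$ and all large $i$. That is precisely the hypothesis of Corollary~\ref{cor:equicont}, so $\{f_i|_{\Omega_{i_0}}\}$ is equicontinuous, and uniformly bounded (its image lies in the compact $M$); by Arzelà--Ascoli a subsequence converges uniformly on $\Omega_{i_0}$.

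Third, running this for each $i_0$ and diagonalizing produces $f\colon X\to M$ as a locally uniform limit of $K$-quasi-conformal embeddings. The map $f$ is nonconstant since $f(x)=a\ne b=f(y)$. One then invokes the same local quasi-conformality criterion used in the proof of Corollary~\ref{cor:qc_limit} (the Riemannian analogue of \cite[21.1]{Vaisala}, valid because locally, via the exponential charts of the Fact following \eqref{eq:QC_K}, everything reduces to the Euclidean statement) to conclude that $f$ is itself a $K$-quasi-conformal embedding $X\to M$, which is the claim.

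The step I expect to be the only real obstacle is the last one: ensuring that the limit of $K$-quasi-conformal \emph{embeddings} is again an embedding (injectivity is not automatically preserved under locally uniform limits) and that the limit is $K$-quasi-conformal in the Riemannian setting rather than merely in Euclidean charts. Both are handled exactly as in Corollary~\ref{cor:qc_limit}: nonconstancy of $f$ plus the modulus characterization \eqref{eq:QC_Gamma} of quasi-conformality force $f$ to be a quasi-conformal local homeomorphism, and then a standard argument (no branching for quasi-conformal maps in dimension $\ge 2$, together with a degree/monodromy argument using that $X$ and $M$ are manifolds) upgrades ``local homeomorphism'' to ``embedding'' with the same constant $K$. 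Since this is spelled out in the reference \cite{Vaisala} for Euclidean domains and the earlier corollaries already rely on it, I would simply cite it and remark that the Riemannian case follows by working in the bilipschitz exponential charts.
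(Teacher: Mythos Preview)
Your proposal is correct and follows precisely the route the paper intends: the paper offers no proof beyond the sentence ``Combining the arguments in the proofs of Corollary~\ref{cor:equicont} and Corollary~\ref{cor:qc_limit} we also obtain,'' and your write-up is exactly that combination, with the correct observation that the third marked point is what replaces the canonical omitted point at infinity available when $M=\BS^d$.

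Two minor remarks. First, you tacitly assume $M$ is closed (to guarantee $M\setminus f_i(\Omega_i)\neq\emptyset$ and to apply Arzel\`a--Ascoli via compactness of the target); the statement as printed says ``complete,'' but the underlying Lemma~\ref{lemma:equicontinuity} and Corollary~\ref{cor:equicont} already require $M$ closed, and the only application in the paper is to a closed $M$, so this is a harmless imprecision in the statement rather than a gap in your argument. Second, you occasionally write $f_i(X)$ where you mean $f_i(\Omega_i)$; since each $f_i$ is only defined on $\Omega_i$ this should be cleaned up, and then the non-surjectivity is immediate because $\Omega_i$ is a bounded open set and cannot be homeomorphic to the closed manifold $M$.
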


\section{Proof of Theorem 1.3}\label{sec:no-collapse}


Note that we had defined quasi-conformal maps to be homeomorphisms. In this section we consider maps that are only locally homemorphisms. Such maps in dimensions $d \ge 3$ have surprising rigidity properties. In Euclidean spaces the seminal theorem is Zorich's global homeomorphism theorem \cite{Z1} (or \cite{Rickman-book}):

\begin{sat*}[Zorich]
For $d\ge 3$ a quasi-conformal local homeomorphism $\BR^d \to \BR^d$ is a homeomorphism.
\end{sat*}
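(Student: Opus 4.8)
The plan is to extend $f$ to a local homeomorphism of the sphere $\BS^d$ and then to conclude by soft topology. Think of $\BR^d$ as $\BS^d\setminus\{\infty\}$ and let $\sigma\colon\BS^d\to\BS^d$ be an inversion interchanging $0$ and $\infty$; then near $\infty$ the composite $f\circ\sigma$ is a quasi-conformal local homeomorphism of a punctured ball $B(0,\rho,\BR^d)\setminus\{0\}$ into $\BR^d$. The essential input I would invoke here is \emph{Zorich's removable singularity theorem}: for $d\ge3$, every quasi-conformal local homeomorphism of a punctured ball $B(0,\rho,\BR^d)\setminus\{0\}$ into $\BR^d$ extends to a quasi-conformal local homeomorphism $B(0,\rho,\BR^d)\to\BS^d$, the puncture being allowed to map to $\infty$. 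Applying this to $f\circ\sigma$ and undoing the inversion, $f$ extends across $\infty$ to a quasi-conformal local homeomorphism $\widehat f\colon\BS^d\to\BS^d$.

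Granting this extension the theorem follows quickly. Being an open map with compact image, $\widehat f$ is onto; being a proper local homeomorphism onto the connected manifold $\BS^d$ it is a covering map; and since $\BS^d$ is simply connected ($d\ge2$) this covering is trivial, so $\widehat f$ is a homeomorphism of $\BS^d$. Finally $\widehat f$ must fix $\infty$: otherwise $\widehat f(\infty)=a\in\BR^d$, and then $f(\BR^d)=\widehat f(\BS^d\setminus\{\infty\})=\BS^d\setminus\{a\}$ would contain $\infty$, contradicting that $f$ takes $\BR^d$ into $\BR^d$. Hence $f=\widehat f|_{\BR^d}\colon\BR^d\to\BR^d$ is a homeomorphism.

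All the weight of the argument is therefore carried by the removable singularity theorem, and this is precisely where the assumption $d\ge3$ cannot be dispensed with: in dimension $2$ the map $z\mapsto e^{1/z}$ is a conformal local homeomorphism of a punctured disk with an essential singularity at the puncture, so no extension exists there. To prove the removability statement I would argue along classical lines. Let $g\colon B(0,\rho,\BR^d)\setminus\{0\}\to\BR^d$ be a $K$-quasi-conformal local homeomorphism. One first introduces normal neighborhoods: for small $s$, the component through a point $x$ of the $g$-preimage of a small round ball about $g(x)$ is, whenever it is relatively compact, carried \emph{homeomorphically} onto that round ball --- because a proper local homeomorphism onto a ball is a covering of a simply connected space and hence a homeomorphism. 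The rate at which $g$ can oscillate as $|x|\to0$ is then estimated by comparing moduli of path families in the ring domains $\{s<|x|<\rho\}$: the annulus capacity formula \eqref{eq:cap} together with the distortion inequality \eqref{eq:QC_Gamma} bounds these moduli from above, while the L\"owner inequality \eqref{eq:Loewner} bounds them from below. For $d\ge3$ the two bounds are incompatible with infinitely many ``sheets'' of $g$ accumulating at $0$; this forces the cluster set of $g$ at $0$ to collapse to a single point of $\BS^d$, and a final use of the normal-neighborhood picture shows that $g$ extends across $0$ as a local homeomorphism. (In dimension $2$ these same estimates tolerate infinitely many sheets --- $e^{1/z}$ realizes exactly this --- which is why the restriction $d\ge3$ is genuine and not an artifact of the method.) This dimension-sensitive modulus argument is the real obstacle; the reduction and the spherical argument above are routine.
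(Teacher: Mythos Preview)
The paper does not prove this statement: Zorich's theorem is quoted as a classical result with a citation to \cite{Z1} and \cite{Rickman-book}, and the paper moves on immediately to the Gromov--Zorich generalization. So there is no ``paper's own proof'' to compare against.

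That said, your route is a legitimate and well-known one. Reducing to a local homeomorphism $\widehat f\colon\BS^d\to\BS^d$ and invoking simple connectivity is clean, and your argument that $\widehat f(\infty)=\infty$ is correct. The only caveat is that what you call ``Zorich's removable singularity theorem'' is not a lighter auxiliary fact but a result of essentially the same depth as the global homeomorphism theorem itself; in standard treatments (e.g.\ Rickman's book) the two are developed together from the same injectivity-radius/modulus machinery. Your sketch of the modulus argument is pointed in the right direction---normal neighborhoods, the annulus capacity \eqref{eq:cap} versus the Loewner bound \eqref{eq:ELoewner}, and the observation that the exponent $1-d$ makes these incompatible with infinite branching precisely when $d\ge 3$---but as written it remains a sketch rather than a proof. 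If you want a self-contained argument you should either spell out the injectivity-radius lemma (a quasi-conformal local homeomorphism of a ball is injective on a concentric ball of definite relative size) or give the full removable-singularity argument; either way the real work is there, as you correctly identify.
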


A geometric version of Zorich's theorem is due to Gromov and Zorich \cite{Gromov-book, Z2}:

\begin{sat*}[Gromov-Zorich]
If $d\ge 3$ and $N$ is a simply connected Riemannian $d$-manifold, then every quasi-conformal local homeomorphism $\BR^d \to N$ is an embedding.
\end{sat*}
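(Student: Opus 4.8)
The plan is to promote $f$ from a local homeomorphism to a \emph{covering map} onto its image and then use that $\BR^d$ is simply connected, together with the hypothesis $d\ge 3$. That hypothesis is essential, and the example to keep in mind is $\exp\colon\BC\to\BC^*$, a conformal --- in particular quasi-conformal --- local homeomorphism of $\BR^2$ that is not injective, so any argument must break down at $d=2$.

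First I would reduce to proving that $f$ is \emph{injective}: a quasi-conformal map is open and discrete, so an injective quasi-conformal local homeomorphism $\BR^d\to N$ is automatically a homeomorphism onto the open subset $U:=f(\BR^d)\subset N$, hence an embedding. Next I would establish that $f\colon\BR^d\to U$ has the \emph{path lifting property}: given a path $\beta\colon[0,1]\to U$ and a point $x_0\in f^{-1}(\beta(0))$, a maximal lift $\gamma\colon[0,a)\to\BR^d$ either extends to $t=a$ or, because $f$ is only a \emph{local} homeomorphism, leaves every compact subset of $\BR^d$, so that $\beta(a)$ is an asymptotic value of $f$. For a quasi-conformal local homeomorphism of $\BR^d$ with $d\ge 3$ this cannot occur: the tract over a small metric ball around $\beta(a)$ that carries the tail of $\gamma$ leads, through the capacity formula \eqref{eq:cap} and the quasi-invariance \eqref{eq:QC_Gamma} of $\Mod_d$ under $f^{-1}$, to a modulus obstruction. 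This is part of the content of Zorich's Euclidean theorem recalled above, and since all the estimates involved are local in the target it applies verbatim with a Riemannian $N$. With path lifting in hand, a local homeomorphism onto the manifold $U$ is a covering map.

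Since $\BR^d$ is simply connected, $f$ then realizes $\BR^d$ as the universal covering of $U$; in particular $U$ is aspherical and $f$ is injective if and only if the deck group $\Gamma=\pi_1(U)$ is trivial. Suppose $\Gamma\ne 1$ and choose $\sigma\in\Gamma\setminus\{1\}$; reading $\sigma$ through local inverses of $f$ shows that $\sigma\colon\BR^d\to\BR^d$ is $K^2$-quasi-conformal and fixed-point-free, and that $\langle\sigma\rangle$ acts properly discontinuously, so --- since no nontrivial finite group acts freely on the contractible manifold $\BR^d$ (Smith theory) --- $\sigma$ has infinite order and the orbit $\{\sigma^n(0)\}_{n\in\BZ}$ is an infinite discrete subset of $\BR^d$ escaping to infinity, all of whose points have the common image $f(0)$. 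Fixing a small metric ball $B\ni f(0)$ with $B\subset U$, each component of $f^{-1}(B)$ through a point $\sigma^n(0)$ is mapped homeomorphically onto $B$ by $f$, these components are pairwise disjoint and escape to infinity, and transporting a round spherical shell inside $B$ back through these branches and again invoking \eqref{eq:cap} and \eqref{eq:QC_Gamma} produces a configuration of essential ring condensers in $\BR^d$ that its Euclidean geometry cannot support once $d\ge 3$ --- the desired contradiction. This final step --- excluding a nontrivial deck transformation by a modulus argument that genuinely uses $d\ge 3$ and fails at $d=2$, consistently with $\exp$ --- is the heart of the matter, and I expect it to be the main obstacle; in a write-up one either reproduces Zorich's modulus computation in the Riemannian setting or, since it is purely local in $N$, invokes the Euclidean statement above, i.e.\ cites \cite{Gromov-book,Z2}.
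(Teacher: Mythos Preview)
The paper does not prove this theorem: it is quoted from \cite{Gromov-book,Z2} and used as a black box in the proof of Proposition~\ref{prop:Z}. There is thus no argument in the paper to compare your sketch against.

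Your outline correctly isolates the two ingredients---the modulus obstruction to asymptotic values and the role of $d\ge 3$---and your observation that the relevant estimates are local in the target is exactly why the Euclidean Zorich argument transports to a Riemannian $N$. The organization into ``path lifting, hence covering of $U=f(\BR^d)$'' followed by ``trivial deck group'' is legitimate but not the standard route: the proofs in \cite{Z1,Z2,Rickman-book} work directly with the local injectivity radius $r(x)=\sup\{r:f|_{B(x,r)}\ \text{is injective}\}$, establish a Harnack-type comparability for $r$ via the modulus of Teichm\"uller/Gr\"otzsch-type rings (this is where $d\ge 3$ genuinely enters), and finish by playing that inequality off against the $d$-parabolicity of $\BR^d$. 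Your step~5 as written is not yet an argument: infinitely many pairwise disjoint ring domains of uniformly bounded modulus certainly do exist in $\BR^d$ (take translates of a fixed spherical shell), so the phrase ``a configuration of essential ring condensers that Euclidean geometry cannot support'' still needs a mechanism---some nesting, separation, or growth condition---that you have not supplied. You acknowledge this and defer to the references, which, since that is precisely what the paper itself does, is a reasonable place to stop for a result you intend only to quote.
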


\begin{bem}
The Gromov-Zorich theorem is in fact more general: one can replace $\BR^d$ by any $d$-parabolic manifold.
\end{bem}

The first step of the proof of Theorem \ref{no-collapse} is the following consequence of the Gromov-Zorich theorem:

\begin{prop}\label{prop:Z}
Let $d\ge3$ and $M$ be a Riemannian $d$-manifold that admits a quasi-conformal embedding $f:X\to M$ from an open flat Riemannian manifold. Then either $f$ is a quasi-conformal homeomorphism or $M$ is quasi-conformal to $\BS^d$ and $X=\BR^d$.
\end{prop}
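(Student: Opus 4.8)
The plan is to pass to universal covers so that the Gromov--Zorich theorem becomes applicable, and then to recover the dichotomy about $f$ by bookkeeping deck transformations. First, since $X$ is a complete flat Riemannian manifold, Cartan's theorem writes $X=\BR^d/\Gamma$ for a discrete group $\Gamma$ of isometries of $\BR^d$ acting freely and properly discontinuously; let $\pi\colon\BR^d\to X$ be the covering. Let $p\colon\widetilde M\to M$ be the universal covering of $M$, with the pulled-back Riemannian metric. Because $\BR^d$ is simply connected the local homeomorphism $f\circ\pi\colon\BR^d\to M$ lifts to a map $g\colon\BR^d\to\widetilde M$ with $p\circ g=f\circ\pi$, and since $\pi$ and $p$ are local isometries and $f$ is $K$-quasi-conformal, $g$ is a $K$-quasi-conformal local homeomorphism into the simply connected manifold $\widetilde M$. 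By the Gromov--Zorich theorem, $g$ is then a quasi-conformal embedding; set $\Omega=g(\BR^d)$.

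Next I would record the equivariance of $g$. For $\gamma\in\Gamma$ the map $g\circ\gamma$ is again a lift of $f\circ\pi$, so $g\circ\gamma=\rho(\gamma)\circ g$ for a unique deck transformation $\rho(\gamma)$ of $p$; this gives a homomorphism $\rho\colon\Gamma\to\pi_1(M)$, injective because $g$ is injective. Using that $f$ is injective one checks that $\Omega$ is \emph{precisely} $\rho(\Gamma)$-invariant: if $\phi$ is a deck transformation of $p$ with $\phi\Omega\cap\Omega\ne\emptyset$, say $\phi g(x)=g(x')$, then $f\pi(x)=pg(x)=pg(x')=f\pi(x')$, hence $\pi(x)=\pi(x')$, hence $x'=\gamma x$ for some $\gamma\in\Gamma$, and then $\phi$ and $\rho(\gamma)$ agree at $g(x)$, so $\phi=\rho(\gamma)$ by freeness of the action. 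Consequently $g$ descends to a homeomorphism $X\to\Omega/\rho(\Gamma)$ which, followed by the natural (by precise invariance) embedding $\Omega/\rho(\Gamma)\hookrightarrow\widetilde M/\pi_1(M)=M$, equals $f$. If $\Omega=\widetilde M$, precise invariance forces $\rho$ onto, hence an isomorphism, so this factorization identifies $f$ with a homeomorphism $X\to M$; being also a quasi-conformal embedding, $f$ is then a quasi-conformal homeomorphism. That is the first alternative.

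It remains to treat the case $\Omega\ne\widetilde M$. Here I would invoke the Zorich-type rigidity of $\BR^d$ in dimension $d\ge 3$: a quasi-conformal embedding of $\BR^d$ into a simply connected Riemannian $d$-manifold $N$ is either onto, or omits exactly one point $*$ and $N$ is then quasi-conformally equivalent to $\BS^d$ (for $N=\BR^d$ this is Zorich's global homeomorphism theorem \cite{Z1,Rickman-book}; in general $*$ is a quasi-conformally removable singularity). Applying this to $g$ makes $\widetilde M$ compact and quasi-conformally equivalent to $\BS^d$. Then $\pi_1(M)$ acts freely and properly discontinuously on the compact manifold $\widetilde M$, hence is finite, so $\Gamma\cong\rho(\Gamma)\le\pi_1(M)$ is a finite group acting freely on $\BR^d$; by Smith theory no nontrivial finite group acts freely on the contractible manifold $\BR^d$, so $\Gamma=1$ and $X=\BR^d$. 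Finally, with $\Gamma=1$ we have $f=p\circ g$, so $p$ is injective on $\Omega=\widetilde M\setminus\{*\}$; but a nontrivial deck transformation of $p$ would be at least two-to-one over every point of $M$ distinct from $p(*)$, all of whose $p$-preimages lie in $\Omega$, a contradiction. Hence $\pi_1(M)=1$, so $M=\widetilde M$ is quasi-conformally equivalent to $\BS^d$ and $X=\BR^d$, the second alternative.

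The main obstacle is exactly the Zorich-type rigidity used in the last step: the Gromov--Zorich theorem as stated only yields that $g$ is an embedding, and upgrading this to ``onto, or the image is $\widetilde M$ minus one point with $\widetilde M$ quasi-conformally a sphere'' is the genuine input. I expect to handle it by a removable-singularity argument for quasi-conformal maps built on the capacity estimate \eqref{eq:cap}, in the same spirit as the proof of Lemma \ref{lemma:density}; alternatively, since every complete flat manifold is $d$-parabolic, one could route the entire argument through the $d$-parabolic form of Gromov--Zorich noted in the remark following it, which would also slightly streamline Step 1.
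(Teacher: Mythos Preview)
Your proof is correct and follows essentially the same route as the paper: lift $f\circ\pi$ to the universal cover, apply Gromov--Zorich, and then invoke the dichotomy for quasi-conformal embeddings $\BR^d\hookrightarrow N$ with $N$ simply connected---the ``Zorich-type rigidity'' you flag as the obstacle is exactly Lemma~\ref{lemma:image}, proved there by the modulus/Loewner argument you anticipate. The one place the paper is slightly slicker is the endgame: once $X=\BR^d$ it simply reapplies Lemma~\ref{lemma:image} directly to $f\colon\BR^d\to M$ (open $M$ gives a homeomorphism, closed $M$ gives $M\simeq\BS^d$), avoiding your separate argument that $\pi_1(M)=1$.
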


Before launching the proof of Proposition \ref{prop:Z} we establish a fact which is again surely well-known to experts:

\begin{lem}
\label{lemma:image}
Let $M$ be a Riemannian manifold and $f \colon \BR^d \to M$ a quasi-conformal embedding. Then $f$ is a homeomorphism if $M$ is open. If $M$ is closed, then $M$ is quasi-conformal to $\BS^d$ and $\vert M\setminus f(\BR^d)\vert = 1$.
\end{lem}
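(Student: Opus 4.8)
The plan is to reduce Lemma~\ref{lemma:image} to the Gromov--Zorich theorem by passing to the universal cover. First I would treat the case $M$ open. Since $f \colon \BR^d \to M$ is a quasi-conformal embedding, $M$ is in particular noncompact, and I want to show $f$ is surjective. Composing $f$ with the covering projection $\pi \colon \widetilde M \to M$ is not what we want; instead, since $\BR^d$ is simply connected, $f$ lifts to a quasi-conformal embedding $\tilde f \colon \BR^d \to \widetilde M$ with $\pi \circ \tilde f = f$. Now $\widetilde M$ is a simply connected Riemannian $d$-manifold, so the Gromov--Zorich theorem applies to $\tilde f$ (viewed as a local homeomorphism): actually $\tilde f$ is already an embedding, so what we need is rather a statement forcing surjectivity. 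The real input is that a quasi-conformal embedding of $\BR^d$ into a manifold which is \emph{not} quasi-conformally $\BS^d$ must be onto; this is where $d$-parabolicity of $\BR^d$ enters. Concretely: if $f(\BR^d) \ne M$, pick $y \in \partial f(\BR^d)$ and a small ball $B = B(y,\rho,M)$ that is quasi-conformally a Euclidean ball; then $f^{-1}(B \cap f(\BR^d))$ is an open subset of $\BR^d$ whose image omits $y$, and by composing with a quasi-conformal chart we get a quasi-conformal embedding of a domain in $\BR^d$ into $\BR^d$ with a boundary point of the image reachable from inside — and one shows using the capacity estimate \eqref{eq:cap} together with $d$-parabolicity that this is impossible unless the omitted part is a single point compactifying $M$ to $\BS^d$.

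More precisely, here is the cleaner route I would actually carry out. Let $\overline M = M \cup \partial_\infty$ if $M$ is open; better, work directly with $E = M \setminus f(\BR^d)$, a closed set. Exhaust $\BR^d$ by balls $B(0,r,\BR^d)$ and consider the path families $\Gamma_r = \Gamma(\partial B(0,1,\BR^d), \partial B(0,r,\BR^d); \BR^d)$, whose moduli tend to $0$ as $r \to \infty$ by \eqref{eq:cap}. Their images $f(\Gamma_r)$ connect the fixed continuum $f(\partial B(0,1,\BR^d))$ to sets escaping to $E \cup \{\text{ends of }M\}$, and have modulus $\le K^{d-1}\Mod_d(\Gamma_r) \to 0$. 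If $E$ contained a nondegenerate continuum, or two distinct points, then these path families would be forced to connect $f(\partial B(0,1))$ to a continuum of definite relative size inside the Loewner space $M$ (or inside a larger closed manifold into which $M$ quasi-conformally embeds after filling in one point), and the Loewner inequality \eqref{eq:Loewner} would give a positive lower bound on their modulus — a contradiction. Hence $E$ is either empty (so $f$ is a homeomorphism) or a single point. In the latter case $M \cup E$ is a closed manifold, $f$ extends to a quasi-conformal homeomorphism from the one-point compactification of $\BR^d$, namely $\BS^d$, onto $M \cup \{pt\}$; but then $M$ itself is open and quasi-conformally $\BS^d$ minus a point, i.e.\ quasi-conformally $\BR^d$, which recovers the first alternative unless we are in the closed case. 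For $M$ closed one argues symmetrically: $f(\BR^d)$ is open and $E = M \setminus f(\BR^d)$ is closed, nonempty, and by the same modulus argument it cannot contain a continuum or two points; so $|E| = 1$, $M = \BS^d$ quasi-conformally, and $|M \setminus f(\BR^d)| = 1$, exactly as claimed.

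The one genuinely delicate point—the main obstacle—is justifying that a quasi-conformal embedding $f \colon \BR^d \to M$ \emph{cannot omit a nondegenerate continuum}, and that when it omits exactly a point the resulting extension is quasi-conformal. For the first part I would invoke $d$-parabolicity of $\BR^d$ (equivalently, that $\Mod_d$ of the family of paths from a fixed compactum to infinity in $\BR^d$ is zero) combined with quasi-invariance of modulus \eqref{eq:QC_Gamma} and the Loewner property \eqref{eq:Loewner} of the closed manifold obtained by adding the (putatively omitted) continuum's ``hull'' — this is precisely the content of the Gromov--Zorich circle of ideas and is why the Remark after the Gromov--Zorich theorem notes one can replace $\BR^d$ by any $d$-parabolic manifold. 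For the second part, that $f$ extends quasi-conformally across the single omitted point, I would use the removability of points for quasi-conformal maps in dimension $d \ge 2$ (a point has zero $d$-capacity, so it is a removable singularity; see \cite[Chapter 17]{Vaisala} or \cite{Rickman-book}), giving a quasi-conformal homeomorphism $\BS^d \to M \cup \{pt\}$ and thus the stated quasi-conformal equivalence of $M$ with $\BS^d$.
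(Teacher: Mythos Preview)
Your modulus strategy is in the right family, but it runs dual to the paper's and in doing so acquires two real gaps.

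First: you invoke the Loewner inequality \eqref{eq:Loewner} in $M$ to force a positive lower bound on $\Mod_d(f(\Gamma_r))$. For $M$ closed this is available, and your argument does rule out a nondegenerate continuum in $E=M\setminus f(\BR^d)$. But when $E$ consists of two isolated points there is no continuum in $E$ to feed into \eqref{eq:Loewner}, and the continua $f(\partial B(0,r,\BR^d))$ need not have diameter bounded below as $r\to\infty$. Your sentence ``or two distinct points, then these path families would be forced to connect $f(\partial B(0,1))$ to a continuum of definite relative size'' is precisely where the argument is missing an idea.

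Second: for $M$ open you still appeal to Loewner, but an open Riemannian manifold with no geometric hypotheses need not be a Loewner space, and your parenthetical about embedding $M$ into a closed manifold ``after filling in one point'' presupposes the conclusion.

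The paper avoids both issues by running the modulus comparison in the \emph{opposite} direction. Assuming $y_1,y_2\in\overline{f(\BR^d)}\setminus f(\BR^d)$ are distinct (closed case), one chooses proper rays $\gamma_j\colon[0,\infty)\to\BR^d$ with $f\circ\gamma_j([0,\infty))\subset B(y_j,r/4,M)$ and $f\circ\gamma_j(t)\to y_j$. Then
\[
\Mod_d\bigl(\Gamma(\gamma_1[0,t],\gamma_2[0,t];\BR^d)\bigr)\to\infty
\]
by the L\"owner-type estimate \eqref{eq:ELoewner} in $\BR^d$, while the image family is contained in the fixed family joining $\partial B(y_1,r/4,M)$ to $\partial B(y_2,r/4,M)$ and hence has bounded modulus---contradicting \eqref{eq:QC_Gamma}. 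For $M$ open one keeps a single boundary point $y$ and replaces $\gamma_2$ by a ray whose image under $f$ is proper in $M$ and avoids $B(y,r,M)$ (possible since $M$ has an end). Thus the Loewner-type input is always used in $\BR^d$, where it is free, and the two-point case is handled directly. Your removability step for the single omitted point, via \cite[17.3]{Vaisala}, is correct.
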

\begin{proof}
Suppose first that $M$ is closed. We show that $M$ is a one point compactification of $f(\BR^d)$. Once this has been proved, $f$ extends to a $K$-quasi-conformal homeomorphism $\BS^d\to M$ \cite[17.3]{Vaisala}. Seeking a contradiction suppose that there are two distinct points $y_1,y_2 \in \overline{f(\BR^d)}\setminus f(\BR^d)$ and set $r=d_M(y_1,y_2)$. Then there are for $j=1,2$ proper paths $\gamma_j\colon [0,\infty) \to \BR^d$ with $(f\circ\gamma_j)([0,\infty)) \subset B(y_j,r/4,M)$ and with $\lim_{t\to\infty}(f\circ\gamma_j)(t)=y_j$. Then 
\begin{multline}
\Mod_d(\Gamma( f(\gamma_1[0,t]),f(\gamma_2[0,t]));M) \le \\ 
\le \Mod_d( \Gamma( \partial B(y_1,r/4,M), \partial B(y_2,r/4,M);M) < \infty
\end{multline}
for all $t$. On the other hand, 
$$\Mod_n(\Gamma( \gamma_1[0,t],\gamma_2[0,t])) \to \infty$$
because $\gamma_j(t)$ tends to $\infty$ as $t$ grows (compare with \eqref{eq:ELoewner}). This contradicts the quasi-conformality of $f$. We have proved Lemma \ref{lemma:image} for closed targets.

The proof for open manifolds is similar. Suppose $f(\BR^d)\ne M$ and let $y\in\overline{f(\BR^d)}\setminus f(\BR^d)$ and fix $r>0$ small. We fix now a proper path $\gamma_1 \colon [0,\infty) \to \BR^d$ so that $(f\circ\gamma_1)[0,\infty) \subset B(y,r/2,M)$ and with $\lim_{t\to\infty}(f\circ\gamma_1)(t)=y$. We fix also a second path $\gamma_2 \colon [0,\infty) \to \BR^d$ so that $f\circ \gamma_2$ is proper and has image disjoint of $B(y,r,M)$; this is possible because $M$ has at least one end. The claim now follows using the same modulus argument as in the closed case.
\end{proof}

\begin{proof}[Proof of Proposition \ref{prop:Z}]
Let $X$ be a flat Riemannian $n$-manifold that admits a quasi-conformal embedding $f:X\to M$. Since $X$ is flat, there exists a conformal covering map $\pi:\BR^d \to X$. Thus we have the quasi-conformal local homeomorphism $f\circ\pi\colon \BR^d \to M$. Let $\tilde M$ be the universal cover of $M$ and fix a lift $\widetilde{f\circ\pi}:\BR^d\to\tilde M$ of $f\circ\pi$. By the Gromov-Zorich theorem, $\widetilde{f\circ\pi}$ is an embedding. Thus, by Lemma \ref{lemma:image}, either $\widetilde{f\circ\pi}$ is a quasi-conformal homeomorphism or $\tilde M$ is quasi-conformal to $\BS^d$ and $\widetilde{f\circ\pi}$ omits one point. In the former case, it follows that also $f$ is a quasi-conformal homeomorphism. Suppose that $\tilde M$ is quasi-conformal to $\BS^d$. Compactness of $\tilde M$ implies that the cover $\tilde M\to M$ is finite. However, every flat manifold but $\BR^d$ has infinite fundamental group. Since $f$ was an embedding to begin with we obtain that actually $X=\BR^d$. Now, Lemma \ref{lemma:image} shows that $M$ is quasi-conformal to $\BS^d$, as we needed to prove.
\end{proof}

After this preparatory work we are ready to prove Theorem \ref{no-collapse}.

\begin{named}{Theorem \ref{no-collapse}}
Let $M$ be a closed Riemannian manifold of dimension $d\ge 3$. For every $K\ge 1$ there is $\epsilon$ positive such that every $M'\in\CQ(M,K)$ with diameter $\diam(M')\ge 1$ has injectivity radius $\inj(M')\ge\epsilon$.
\end{named}

Recall that $M'$ belongs to $\CQ(M,K)$ if it has pinched curvature $\vert\kappa_{M'}\vert\le 1$ and if there is a $K$-quasi-conformal homeomorphism $f:M'\to M$.

\begin{proof}
Seeking a contradiction, suppose there exists a sequence $(M_i)\subset\CQ(M,K)$ with $\diam(M_i)\ge 1$ and $r_i = \inj(M_i)\to 0$ as $i\to \infty$. Fix for every $i$ a point $p_i \in M_i$ with $\inj (M_i,p_i) = \inj(M_i)$ and a $K$-quasi-conformal homeomorphism $f_i \colon M_i \to M$.

We consider now pointed manifolds $(N_i,p_i) = (\frac{10}{r_i} M_i, p_i)$ for all $i$. We have $\inj(N_i)=10$ for all $i$ while $|\kappa_{N_i}| \to 0$ and $\diam N_i \to \infty$ as $i \to \infty$. By the Gromov-Hausdorff compactness, we may hence assume, after possibly passing to a subsequence, that $(N_i,p_i)$ converges in the $C^{1,\alpha}$-topology to a $(X,p)$, where $X$ is a non-compact flat manifold. Furthermore, as in the proof of Cheeger's Lemma (see e.g.\;\cite{Petersen-art}), $X$ is not simply connected. 
\medskip

\noindent{\bf Claim.} There is a quasi-conformal embedding of $X$ either into $M$ or into $\BR^d$.
\medskip

Assuming the claim we conclude the proof: in both cases, Proposition \ref{prop:Z} implies that $X$ is homeomorphic to $\BR^d$ and hence that $\pi_1(X)=1$; a contradiction. It remains to prove the claim.
\medskip

Consider a nested exhaustion $\Omega_1\subset\bar\Omega_1\subset\Omega_2\subset\dots$ of $X=\bigcup_i\Omega_i$ by bounded domains such that $\Omega_1$ contains the ball $B(p,1,X)$. By passing to a subsequence, we may assume that for all $i$ there is a $2$-bilipschitz embedding $\psi_i \colon (\Omega_i,p) \to (N_i,p_i)$. Consider the $K2^d$-quasi-conformal maps $h_i = f_i \circ \psi_i \colon \Omega_i \to M$. 

Suppose for the time being that the family $\{h_i:\Omega_i\to M\}$ is equicontinuous. Since $M$ is closed we may assume by the Arzela-Ascoli theorem, and possibly passing to a subsequence, that the maps $h_i$ converge to a continuous map $h:X \to M$. If $h$ is a quasi-conformal embedding we are done, so we may assume by Corollary \ref{cor:qc_limit2} that $h$ is constant; say $h(\Omega_k)=q$. Notice that this implies that $\lim_{i\to\infty}\diam_M(h_i(\Omega_k))=0$ for all $k$. Passing to a subsequence we can hence assume that in fact $\lim_{i\to\infty}\diam_M(h_i(\Omega_k))=0$. Consider now $R>0$ small enough so that 
$$\exp_q:B(0,R,T_qM)\to B(0,R,M)$$
is a 2-quasi-conformal embedding and consider the maps 
$$\exp_q^{-1}\circ h_i:\Omega_k\to T_qM=\BR^d.$$
Corollary \ref{cor:qc_limit} implies that we can post-compose each one of these maps with a conformal automorphism $\phi_i$ of $\BR^d$ so that the maps
$$\phi_i\circ\exp_q^{-1}\circ h_i:\Omega_k\to\BR^d$$
after taking  a diagonal subsequence converge to an embedding $X\to\BR^d$. Thus we are done for $h$ constant. 

It remains to prove the claim in the case that the family $\{h_i:\Omega_i\to M\}$ is not equicontinuous. Under this assumption Lemma \ref{lemma:equicontinuity} and Corollary \ref{cor:qc_limit2} imply that $\lim_{i\to\infty}\diam_M(M\setminus h_i(B(p,1,X)))=0$. Passing to another subsequence we may hence assume that the sets $M\setminus h_i(B(p,1,X))$ converge in the Hausdorff topology to some $q\in M$. In particular we may assume, up to composing the map $h_i:\Omega_i\to M$ with an isotopy which is arbitrarily $C^\infty$-close to $\Id_M$, and hence up to replacing $h_i$ by a say $2K2^d$-quasi-conformal map, that 
$$q\notin\overline{h_i(\Omega_i)}$$ 
for all $i$. Then, for all $i$ there is $k$ with $h_i(\Omega_i)\subset h_k(B(p,1,X))$. Passing yet again to a subsequence we may thus assume that $h_i(\Omega_i)\subset h_{i+1}(B(p,1,X))$ for all $i$. 

Now observe that since $X$ is flat and since $\inj(X)=10$ we have that the ball $B(p,1,X)$ is isometric to the standard ball $B(0,1,\BR^d)$. We have hence the sequence of maps
$$h_{i+1}^{-1}\circ h_i:\Omega_i\to \BR^d.$$
Corollary \ref{cor:qc_limit} implies again that we can post-compose each one of these maps with a conformal automorphism $\phi_i$ of $\BR^d$ so that the maps
$$\phi_i\circ h_{i+1}^{-1}\circ h_i:\Omega_i\to\BR^d$$
converge, after passing for a last time to a subsequence, to an embedding $X\to\BR^d$. This concludes the proof of the claim and hence of Theorem \ref{no-collapse}.
\end{proof}

\section{The Benjamini-Schramm lemma}\label{sec:bs}
In this section we discuss briefly a lemma due to Benjamini and Schramm \cite[Lemma 2.3]{BS} and derive two rather simpleminded manifold versions.

Let $M$ be a Riemannian manifold and $C\subset M$ a finite set of points. The {\em isolation radius} $\rho_{M,C}(w)$ of $w\in C$ is the minimal distance to a different point in $C$:
$$\rho_{M,C}(w)=\min_{z\in C,\ z\neq w}d_M(w,z).$$
Given $\delta\in(0,1)$ and $s>0$, we say that $w\in C$ is {\em $(\delta,s)$-supported} if 
$$\min_{p\in M}\left\vert C\cap\left(B(w,\delta^{-1}\rho_{M,C}(w),M)\setminus B(p,\delta\rho_{M,C}(w),M)\right)\right\vert\ge s.$$
The paradigmatic example of a $(\delta,s)$-supported point $w$ is one for which the ball $B(w,\delta^{-1}\rho_{M,C}(w),M)$ contains two disjoint balls $B_1$ and $B_2$ of radius $\delta\rho_{M,C}(w)$, each one of which contains at least $s$ points in $C$.

\begin{figure}[h]
        \centering
         \includegraphics[width=4cm]{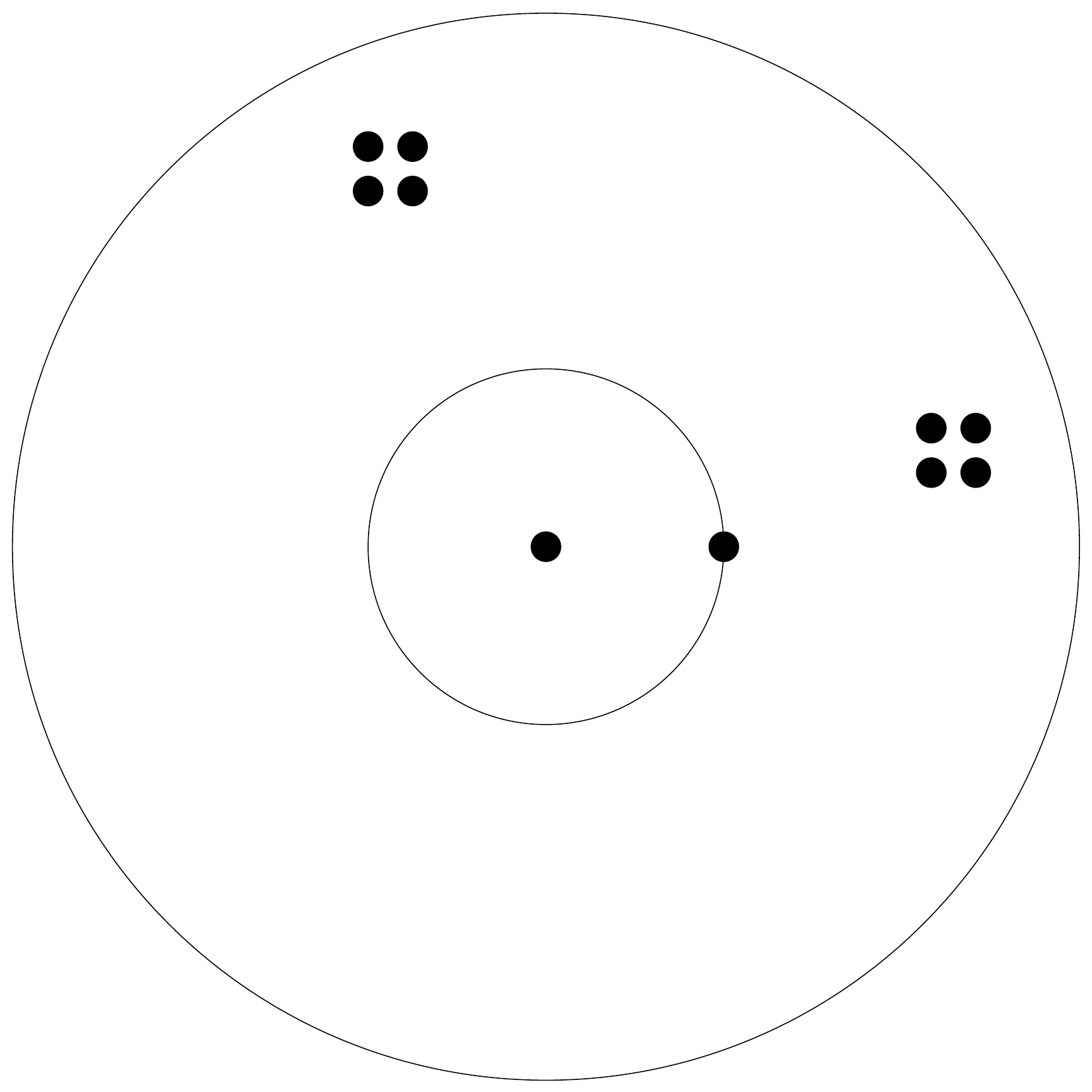}
         \caption{A $(\frac 13,4)$-supported point.}\label{fig1}
\end{figure}

Notice that if $\delta'<\delta$ and if a point is $(\delta,s)$-supported, then it is also $(\delta',s)$-supported. Similarly, if $s>s'$ and if a point is $(\delta,s)$-supported, then it is also $(\delta,s')$-supported.

\begin{lem*}[Benjamini-Schramm]
For every $d>0$ and every $\delta\in(0,1)$ there is a constant $c(d,\delta)$ such that for every finite subset $C$ of $\BR^d$ and every $s\ge 2$ the set of $(\delta,s)$-supported points in $C$ has cardinality at most $c(d,\delta)\frac{\vert C\vert}s$.
\end{lem*}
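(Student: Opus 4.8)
The plan is to adapt the Benjamini--Schramm argument in its natural Euclidean form: a dyadic pigeonhole on the isolation radii combined with a packing estimate. First I would partition the set of $(\delta,s)$-supported points of $C$ according to the scale of their isolation radius. Writing $S_k = \{ w\in C : 2^k \le \rho_{\BR^d,C}(w) < 2^{k+1}\}$, it suffices to show $\sum_k \vert S_k \vert$ is bounded by $c(d,\delta)\vert C\vert/s$; but as is usual in this kind of argument, the right statement is stronger and more local: I would fix a single scale $k$ and show that $\vert S_k\vert$ contributes at most $c(d,\delta)N_k/s$, where $N_k$ is the number of points of $C$ lying in a slightly enlarged neighborhood of $S_k$, and then sum using bounded overlap of these neighborhoods.

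The core estimate at a fixed scale works as follows. For $w\in S_k$, the ball $B(w,\delta^{-1}\rho_{\BR^d,C}(w),\BR^d)$ is contained in $B(w, 2^{k+1}\delta^{-1},\BR^d)$ and contains $B(w,\delta 2^k,\BR^d)$. The $(\delta,s)$-support condition says: for every $p\in\BR^d$, at least $s$ points of $C$ lie in $B(w,\delta^{-1}\rho,\BR^d)\setminus B(p,\delta\rho,\BR^d)$. I would use this to show that the multiset $\{$points of $C$ near $w$ at scale $2^k\}$ is ``spread out'' in a way that forces many of them. Concretely, cover $B(w,\delta^{-1}2^{k+1},\BR^d)$ by a bounded number $L=L(d,\delta)$ of balls of radius $\delta 2^{k-1}$; applying the support condition with $p$ the center of any one such small ball shows that at least $s$ points of $C$ avoid that small ball but stay in the big ball. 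Summing a weighted count over a greedy/Vitali subfamily and using that the isolation radius $\rho\ge 2^k$ means points of $C$ in $B(w,\delta 2^k,\BR^d)$ are ``rare'' (in fact $w$ is essentially alone there up to bounded multiplicity), one deduces a lower bound of the form $\#(C\cap B(w,C_0 2^k,\BR^d)) \ge c_1(d,\delta) s$ for a suitable radius $C_0 2^k$ depending only on $d,\delta$.

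Once that local lower bound is in hand, the conclusion is a covering argument. Take a maximal $2^k$-separated subset $\mathcal W_k\subset S_k$; the balls $B(w, 2^{k-1},\BR^d)$, $w\in\mathcal W_k$, are pairwise disjoint, while the larger balls $B(w,C_0 2^k,\BR^d)$ overlap with multiplicity bounded by a constant $C_2(d)$ (a standard Euclidean packing fact). Hence
\[
\vert \mathcal W_k\vert \cdot c_1(d,\delta) s \le \sum_{w\in\mathcal W_k} \#\bigl(C\cap B(w,C_0 2^k,\BR^d)\bigr) \le C_2(d)\, \#\bigl(C\cap U_k\bigr),
\]
where $U_k$ is the $C_0 2^k$-neighborhood of $\mathcal W_k$. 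Since every point of $S_k$ lies within $2^k$ of some point of $\mathcal W_k$, we have $S_k\subset U_k$ and $\vert S_k\vert \le C_3(d)\vert\mathcal W_k\vert$ (again by separation and packing). Finally, summing over $k$ and noting that the sets $U_k$ have bounded overlap in $k$ as well — a point $x\in C$ can be $C_0 2^k$-close to a point with isolation radius in $[2^k,2^{k+1})$ for only boundedly many consecutive $k$, by a pigeonhole on dyadic scales — gives $\sum_k \vert S_k\vert \le c(d,\delta)\vert C\vert/s$. The main obstacle I anticipate is making the local lower bound $\#(C\cap B(w,C_0 2^k))\ge c_1 s$ genuinely clean: one has to choose the covering scale so that the excluded ball $B(p,\delta\rho)$ in the support condition is strictly inside one covering cell, and simultaneously control the possible multiplicity of $C$-points inside $B(w,\delta 2^k)$ using only the isolation radius bound — this bookkeeping, together with the double bounded-overlap argument (in space at a fixed scale, and across scales), is where the constants are really pinned down.
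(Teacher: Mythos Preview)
The paper does not prove this lemma; it simply quotes the statement and refers to \cite{BS} (for $d=2$) and \cite{BC} (for general $d$), so there is no in-paper argument to compare against. On its own merits, your outline is correct up through the fixed-scale estimate: for $w\in S_k$ the support condition (with $p=w$) immediately gives at least $s$ points of $C$ in $B(w,2\delta^{-1}2^k,\BR^d)$, and the packing argument at a single scale is fine. The local lower bound you flag as the ``main obstacle'' is in fact the easy part.

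The genuine gap is the last step, the claimed bounded overlap of the $U_k$ across scales. Your assertion that a fixed $x\in C$ lies in $U_k$ for only boundedly many $k$ is false, and ``pigeonhole on dyadic scales'' does not supply an argument. Take $C=\{0\}\cup\{\pm 3^j:0\le j\le K\}\subset\BR$. For $1\le j\le K-1$ the point $3^j$ has isolation radius $2\cdot 3^{j-1}$ and is $(\delta,2)$-supported for small $\delta$; the associated dyadic scales $k_j\approx j\log_2 3$ take roughly $K$ distinct values. Since $0\in B(3^j,C_0 2^{k_j})$ for every such $j$, the origin belongs to $U_{k_j}$ for all of them, and more generally $\sum_k\vert C\cap U_k\vert$ grows like $K^2$ while $\vert C\vert=2K+3$. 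Your intermediate inequality $\sum_k\vert C\cap U_k\vert\le C_4\vert C\vert$ therefore fails. The underlying issue is that the $s$ points witnessing the support condition at scale $k$ can be the very same points witnessing it at a larger scale, so a raw count over scales double-counts without limit.

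The Benjamini--Schramm argument avoids this by using the full force of the definition: removing \emph{any} ball of radius $\delta\rho(w)$ still leaves $\ge s$ points, which forces $C\cap B(w,\delta^{-1}\rho(w))$ to split into at least two well-separated clusters. Iterating this splitting organizes $C$ into a tree in which $(\delta,s)$-supported points sit above two subtrees each carrying $\gtrsim s$ leaves; a tree on $\vert C\vert$ leaves has at most $O(\vert C\vert/s)$ such branch vertices. It is this combinatorial tree argument, not a direct covering bound, that closes the proof.
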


In \cite{BS}, Benjamini and Schramm proved this lemma only for $d=2$, but the proof applies word-by-word to sets in $\BR^d$ for arbitrary $d$ \cite{BC}. In this section we derive two rather simpleminded versions of the Benjamini-Schramm lemma for manifolds. We first use the Whitney embedding theorem to reduce to the euclidean situation:

\begin{lem}\label{bs1}
Let $M$ be a compact Riemannian manifold. For every $\delta\in(0,1)$ there is a constant $c(M,\delta)$ such that for every finite subset $C$ of $M$ and every $s\ge 2$ the set of $(\delta,s)$-supported points in $C$ has cardinality at most $c(M,\delta)\frac{\vert C\vert}s$.
\end{lem}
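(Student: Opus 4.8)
The strategy is to transfer the Euclidean Benjamini-Schramm lemma to a compact manifold $M$ via an embedding. First I would invoke Whitney's embedding theorem (or just Nash, but a smooth embedding suffices) to realize $M$ as a smoothly embedded compact submanifold of some $\BR^N$. Since $M$ is compact and smoothly embedded, the Riemannian distance $d_M$ and the restriction of the Euclidean distance $d_{\BR^N}|_M$ are bi-Lipschitz equivalent: there is a constant $L=L(M)\ge 1$ with
$$\frac{1}{L}\, d_M(x,y) \le \vert x-y\vert_{\BR^N} \le L\, d_M(x,y)$$
for all $x,y\in M$. The plan is then to compare the isolation radii and the annular conditions computed in $(M,d_M)$ with those computed in $\BR^N$ using the set $C$ as a subset of $\BR^N$.

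\textbf{Key steps.} Fix $\delta\in(0,1)$ and a finite set $C\subset M$, and write $\rho_M(w)=\rho_{M,C}(w)$, $\rho_{\mathrm{eucl}}(w)=\rho_{\BR^N,C}(w)$ for the isolation radii in the two metrics. Step 1: from the bi-Lipschitz bound, $\frac1L\rho_M(w)\le \rho_{\mathrm{eucl}}(w)\le L\,\rho_M(w)$. Step 2: suppose $w\in C$ is $(\delta,s)$-supported in $(M,d_M)$; I claim it is $(\delta',s)$-supported in $\BR^N$ for $\delta'=\delta/L^2$. Indeed, $B(w,\delta^{-1}\rho_M(w),M)$ is contained in the Euclidean ball $B_{\BR^N}(w,L\delta^{-1}\rho_M(w)) \subseteq B_{\BR^N}(w,L^2\delta^{-1}\rho_{\mathrm{eucl}}(w)) = B_{\BR^N}(w,(\delta')^{-1}\rho_{\mathrm{eucl}}(w))$, while for any $p\in\BR^N$ the Euclidean ball $B_{\BR^N}(p,\delta'\rho_{\mathrm{eucl}}(w))$ intersected with $M$ has diameter at most $2\delta'\rho_{\mathrm{eucl}}(w)\le 2\delta'L\rho_M(w)$, so it is contained in some $d_M$-ball $B(p',\delta\rho_M(w),M)$ by choosing $\delta'$ small enough (concretely $\delta' \le \delta/(2L^2)$ works, possibly after a harmless factor). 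Hence every $p$-shaped excision in $\BR^N$ is absorbed by an $M$-excision, so the $\ge s$ points surviving in $M$ also survive in $\BR^N$; this shows $(\delta,s)$-supported in $M$ implies $(\delta',s)$-supported in $\BR^N$. Step 3: apply the Euclidean Benjamini-Schramm lemma in $\BR^N$ with parameter $\delta'$, giving a bound $c(N,\delta')\frac{\vert C\vert}{s}$ on the number of $(\delta',s)$-supported points in $C\subset\BR^N$, which therefore bounds the number of $(\delta,s)$-supported points in $(M,d_M)$. Setting $c(M,\delta)=c(N,\delta/(2L^2))$, which depends only on $M$ and $\delta$ (since $N$ and $L$ are determined by $M$), finishes the proof.

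\textbf{Main obstacle.} The only point requiring care is Step 2 — making sure the direction of the inequalities in comparing balls and annuli is consistent so that being supported "downstairs" in $M$ forces being supported "upstairs" in $\BR^N$, and not the other way around. One must check that shrinking $\delta$ by a controlled factor (depending on $L$) is enough to swallow the metric distortion in both the outer ball (which should only grow) and in each excised inner ball (whose Euclidean radius must be chosen small enough that its $M$-shadow fits inside a $d_M$-ball of the right radius). There is also a mild subtlety that a Euclidean ball $B_{\BR^N}(p,r)$ centered at an arbitrary $p\in\BR^N$ need not meet $M$ at all, but that only helps: if it does meet $M$, its trace on $M$ has small $d_M$-diameter and hence lies in a single small $d_M$-ball centered at a point $p'\in M$; if it does not meet $M$, it excises nothing. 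Everything else is bookkeeping with bi-Lipschitz constants.
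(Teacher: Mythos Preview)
Your proposal is correct and follows essentially the same approach as the paper: embed $M$ smoothly into some $\BR^N$, use the resulting bi-Lipschitz equivalence of metrics to show that $(\delta,s)$-supported in $M$ implies $(\delta/(2L^2),s)$-supported in $\BR^N$, and then invoke the Euclidean Benjamini--Schramm lemma. The paper's computation is virtually identical, including the constant $\delta/(2L^2)$ and the handling of the excised ball centered at an arbitrary $p\in\BR^N$ by passing to a point of $C$ in its trace on $M$.
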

\begin{proof}
Consider a smooth embedding $M\hookrightarrow\BR^d$ for some suitable $d$ and let $L\ge 1$ be such that for all $x,y\in M$ we have
$$L^{-1}d_M(x,y)\le d_{\BR^d}(x,y)\le Ld_M(x,y)$$
Suppose now that $C\subset M$ is a finite set and notice that for all $w\in C$ we have the following relation between the separation radius of $w$ when we consider $C$ as a subset of $M$ and as a subset of $\BR^d$:
$$L^{-1}\rho_{M,C}(w)\le\rho_{\BR^d,C}(w)\le L\rho_{M,C}(w).$$
In particular, we have for all $\delta\in(0,1)$ that
\begin{equation}\label{hola1}
B(w,(2\delta L^{2})^{-1}\rho_{M,C}(w),M)\subset B(w,\delta^{-1}\rho_{\BR^d,C}(w),\BR^d).
\end{equation}
Similarly, for all $p\in\BR^d$ and $x,y\in C\cap B(p,\delta\rho_{\BR^d,C}(w),\BR^d)$ we also have
$$d_M(x,y)\le Ld_{\BR^d}(x,y)\le 2L\delta\rho_{\BR^d,C}(w)\le 2L^2\delta\rho_{M,C}(w)$$
and hence that
\begin{equation}\label{hola2}
C\cap B(p,\delta\rho_{\BR^d,C}(w),\BR^d)\subset C\cap B(x,2\delta L^2\rho_{M,C}(w),M).
\end{equation}
Taken together, \eqref{hola1} and \eqref{hola2} imply that for all $p\in\BR^d$ and all $x\in C\cap B(p,\delta\rho_{\BR^d,C}(w),\BR^d)$ we have
\begin{multline}
\left\vert C\cap\left(B(w,\delta^{-1}\rho_{\BR^d,C}(w),\BR^d)\setminus B(p,\delta\rho_{\BR^d,C}(w),\BR^d)\right)\right\vert \\ 
\ge\left\vert C\cap\left(B(w,(2\delta L^2)^{-1}\rho_{M,C}(w),M)\setminus B(x,2\delta L^2\rho_{M,C}(w),M)\right)\right\vert.
\end{multline}
In particular, it follows that if $w\in C$ is $(\delta,s)$-supported in $M$, then is also $(\frac\delta{2L^2},s)$-supported in $\BR^d$. The claim follows now from the Benjamini-Schramm lemma.
\end{proof}

Lemma \ref{bs1} is the form of the Benjamini-Schramm lemma that we will use to prove Theorem \ref{sat1}. Unfortunately, we do not see how to make use of Lemma \ref{bs1} to prove Theorem \ref{sat12} because we need to apply a version of the Benjamini-Schramm lemma to a sequence of pairwise distinct surfaces. What will come to our help is that for surfaces one can explicitly construct nice atlases.

\begin{lem}\label{bs2}
Let $M$ be a compact $d$-manifold, $U_1,\dots,U_r$ a collection of open sets covering $M=\bigcup U_i$, $\phi_i:U_i\to\BR^d$ an embedding for each $i$ and let $k$ be the multiplicity of the covering. 

For every $\delta\in(0,1)$ and every finite subset $C$ of $M$ and every $s\ge 2$ the set of those $x\in C$ for which there is $i$ with $x\in U_i$ and such that $\phi_i(x)$ is $(\delta,s)$-supported in $\phi_i(C\cap U_i)$ has cardinality at most $c(d,\delta)\frac{k\vert C\vert}s$ where $c(d,\delta)$ is the constant in the Benjamini-Schramm lemma.
\end{lem}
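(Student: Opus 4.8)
### Proof proposal

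The plan is to reduce Lemma \ref{bs2} to the Euclidean Benjamini-Schramm lemma applied separately in each chart $U_i$, and then control the total count by multiplying through by the multiplicity $k$ of the cover. The key point is that, unlike in Lemma \ref{bs1}, we do \emph{not} need any bilipschitz comparison between the metric on $M$ and the Euclidean metric: the $(\delta,s)$-supported condition appearing in the statement is \emph{already} expressed intrinsically in terms of the Euclidean set $\phi_i(C\cap U_i)\subset\BR^d$, so no distortion estimate is involved. This is precisely why the constant $c(d,\delta)$ that appears is the one from the Benjamini-Schramm lemma itself, with no dependence on $M$ or on the charts $\phi_i$.

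First I would fix $\delta\in(0,1)$, a finite subset $C\subset M$ and $s\ge 2$. Let
\[
S = \{ x\in C \mid \exists\, i \text{ with } x\in U_i \text{ and } \phi_i(x) \text{ is } (\delta,s)\text{-supported in } \phi_i(C\cap U_i)\},
\]
and for each index $i\in\{1,\dots,r\}$ let
\[
S_i = \{ x\in C\cap U_i \mid \phi_i(x) \text{ is } (\delta,s)\text{-supported in } \phi_i(C\cap U_i)\}.
\]
By definition $S\subset\bigcup_{i=1}^r S_i$, hence $\vert S\vert \le \sum_{i=1}^r\vert S_i\vert$. Now apply the Benjamini-Schramm lemma to the finite set $\phi_i(C\cap U_i)\subset\BR^d$: since $\phi_i$ is injective on $U_i$, we have $\vert S_i\vert$ equal to the number of $(\delta,s)$-supported points of $\phi_i(C\cap U_i)$, which is at most $c(d,\delta)\frac{\vert C\cap U_i\vert}{s} \le c(d,\delta)\frac{\vert C\cap U_i\vert}{s}$. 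Summing over $i$ gives
\[
\vert S\vert \le \frac{c(d,\delta)}{s}\sum_{i=1}^r \vert C\cap U_i\vert.
\]
Finally, since every point of $C$ lies in at most $k$ of the sets $U_i$ (that is what multiplicity $k$ means), we have $\sum_{i=1}^r\vert C\cap U_i\vert \le k\vert C\vert$, and therefore $\vert S\vert \le c(d,\delta)\frac{k\vert C\vert}{s}$, which is the assertion.

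There is essentially no obstacle here: the lemma is a bookkeeping statement, and the only subtlety is making sure the "$(\delta,s)$-supported in $\phi_i(C\cap U_i)$" hypothesis is read as a condition purely about a point configuration in $\BR^d$, so that the Benjamini-Schramm lemma applies verbatim with its own constant $c(d,\delta)$. One should also note that $\phi_i$ being merely an embedding (not assumed bilipschitz or even conformal) is exactly the right hypothesis, since the conclusion only concerns the image set $\phi_i(C\cap U_i)$ and the function $\phi_i\mapsto$ image is all that is used. The multiplicity factor $k$ is the price paid for points of $C$ potentially being counted in several charts simultaneously.
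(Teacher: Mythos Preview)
Your proof is correct and follows essentially the same argument as the paper: apply the Benjamini-Schramm lemma in each chart to bound $|S_i|\le c(d,\delta)\frac{|C\cap U_i|}{s}$, sum over $i$, and use the multiplicity bound $\sum_i|C\cap U_i|\le k|C|$. The paper's proof is slightly terser but the steps and logic are identical.
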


Recall that the {\em multiplicity at $x\in M$} of an open covering $M=\bigcup U_i$ is the number of members of the covering with $x\in U_i$. The {\em multiplicity} of the covering is the maximum of the multiplicities over all points in $M$.

\begin{proof}
Let $A_i$ be the set of $(\delta,s)$ separated points in $\phi_i(C\cap U_i)$. From the Benjamini-Schramm lemma we get that $\vert A_i\vert\le c(d,\delta)\frac{\vert C\cap U_i\vert}s$. It follows that the set of those $x\in C$ for which there is $i$ with $x\in U_i$ and such that $\phi_i(x)$ is a $(\delta,s)$-supported point in $\phi_i(C\cap U_i)$ has cardinality
$$\vert\bigcup_{i=1}^r\phi_i^{-1}(A_i)\vert\le\sum_{i=1}^r\vert A_i\vert\le c(d,\delta)\frac{\sum_i \vert C\cap U_i\vert}s\le c(d,\delta)\frac{k\vert C\vert}s$$
as we needed to prove.
\end{proof}

Lemma \ref{bs2} is only of any use if one can construct coverings in a controlled way. That is the case if $M$ is a surface of constant curvature. We denote by $g(M)$ the genus of $M$.

\begin{lem}\label{covering-surface}
There are $k$ and $\delta$ such that every orientable Riemannian surface $M$ with constant curvature $\kappa_M\equiv -1,0,1$ (and $\vol(M)=1$ if $M$ is a torus) has an open covering $M=U_1\cup\dots\cup U_r$ with the following properties:
\begin{itemize}
\item The cover has at most multiplicity $k$.
\item For every $x\in M$ there is $i$ such that $B(x,\delta,M)\subset U_i$.
\item Each $U_i$ admits a conformal embedding into $\BC$.
\end{itemize}
\end{lem}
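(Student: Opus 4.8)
The plan is to use the classification of constant-curvature surfaces: by uniformization such an $M$ is isometric to the round sphere $\BS^2$ (when $\kappa_M\equiv 1$), to a flat torus $\BR^2/\Lambda$ of area $1$ (when $\kappa_M\equiv 0$), or to a closed hyperbolic surface (when $\kappa_M\equiv -1$). I would construct the covering separately in each of the three cases and then take $\delta$ to be the smallest, and $k$ the largest, of the three constants produced. The spherical case is trivial: put $U_1=\BS^2\setminus\{N\}$ and $U_2=\BS^2\setminus\{-N\}$ for any $N\in\BS^2$; stereographic projection identifies each $U_i$ conformally with $\BC$, the multiplicity is $2$, and since $N$ and $-N$ are at distance $\pi$ every ball of radius $\delta<\pi/2$ avoids one of them and therefore lies in $U_1$ or $U_2$.

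For a flat torus $M=\BR^2/\Lambda$ with $\vol(M)=1$, let $v_1$ be a shortest nonzero vector of $\Lambda$, so $|v_1|$ equals the systole of $M$ and hence is at most $(2/\sqrt3)^{1/2}$; completing $v_1$ to a basis $v_1,v_2$, the Euclidean distance $h$ between consecutive lines $\BR v_1+k v_2$ satisfies $h|v_1|=\vol(M)=1$, so $h\ge(\sqrt3/2)^{1/2}>0$. Rotating $\BR^2$ so that $v_1$ is horizontal, one obtains a well-defined ``transverse coordinate'' $t\in\BR/h\BZ$, and I would take the $U_i$ to be the images in $M$ of the horizontal strips $\{a_i<t<a_i+3\delta\}$, where the $a_i$ form an arithmetic progression in $\BR/h\BZ$ of step between $\delta/2$ and $\delta$. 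Provided $3\delta<h$ (true once $\delta$ is fixed small), distinct $\Lambda$-translates of such a strip are disjoint, so it maps injectively onto an open annulus in $M$ which is conformally a round annulus and hence embeds in $\BC$; the multiplicity is bounded by a universal constant because consecutive strips overlap in a controlled way; and any ball $B(x,\delta,M)$ has transverse coordinates in an interval of length $2\delta$, which lies in one of the strips by the choice of step. (The ball $B(x,\delta,M)$ may wrap around the $v_1$-direction when the systole is small, but this is harmless since the strips wrap around that direction too.)

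The hyperbolic case is the one requiring care. Fix once and for all a constant $\mu>0$ smaller than the two-dimensional Margulis constant; then for every closed hyperbolic surface $M$ the $\mu$-thin part $\{x:\inj(x)<\mu\}$ is a disjoint union of embedded collars, one around each of the finitely many simple closed geodesics $\gamma_1,\dots,\gamma_m$ of length $<\mu$, and by the collar lemma each such collar sits inside a \emph{maximal} collar $\widehat{\CC}_j$, an embedded annulus, with $d\big(\{\inj<\mu\}\cap\widehat{\CC}_j,\ \partial\widehat{\CC}_j\big)\ge c_0(\mu)>0$; moreover the $\widehat{\CC}_j$ are pairwise disjoint. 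I would take as charts, first, the maximal collars $\widehat{\CC}_j$ themselves, each of which is an annulus and so conformally embeds in $\BC$; and second, the balls $U_p=B(p,\mu/4,M)$ where $p$ runs over a maximal $(\mu/8)$-net of the $\mu$-thick part $\{\inj\ge\mu\}$, each of which is an embedded simply connected disk (since $\inj(p)\ge\mu$) and hence conformally embeds in $\BC$. If $\inj(x)<\mu$ then $x$ lies in some $\widehat{\CC}_j$ at distance $\ge c_0(\mu)$ from $\partial\widehat{\CC}_j$, so $B(x,\delta,M)\subset\widehat{\CC}_j$ as long as $\delta<c_0(\mu)$; if $\inj(x)\ge\mu$ then $x$ is within $\mu/8$ of some net point $p$, so $B(x,\delta,M)\subset B(p,\mu/4,M)=U_p$ once $\delta\le\mu/8$. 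Finally the multiplicity is bounded: the $\widehat{\CC}_j$ are pairwise disjoint, and the number of $U_p$ through a given point is at most the number of $(\mu/8)$-separated points in a ball of fixed radius, which is bounded in terms of $\mu$ alone by the Bishop--Gromov inequality.

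I expect the genuine obstacle to be the quantitative input in the hyperbolic case: the uniform lower bound $c_0(\mu)>0$ for how far the maximal collar of a short geodesic extends beyond the $\mu$-thin part, uniformly over all closed hyperbolic surfaces. Together with the elementary monotonicity of the injectivity radius along rays leaving the core geodesic of a collar, this is exactly what lets a $\delta$ of fixed size (independent of the surface, hence of arbitrarily small systoles) work. This is a standard but slightly delicate consequence of the collar lemma; the remaining points --- the conformal classification of planar annuli and disks, the packing estimates controlling the multiplicity, and checking that $\delta$ can be taken below all the universal thresholds that appear --- are routine bookkeeping.
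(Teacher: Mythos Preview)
Your proposal is correct and follows essentially the same strategy as the paper: split by curvature, handle $\BS^2$ by two stereographic charts, handle tori by annular charts, and handle hyperbolic surfaces via the thin--thick decomposition, using Margulis collars as annular charts and small metric balls centered on a net of the thick part as disk charts, with Bishop--Gromov controlling the multiplicity. The only notable difference is in the flat case: the paper observes that a unit-area flat torus contains a closed geodesic $\gamma$ of length at most $2$, takes $U_1=M\setminus\gamma$ and $U_2=M\setminus\gamma'$ where $\gamma'$ is the parallel geodesic halfway around, and notes that since $d(\gamma,\gamma')\ge\frac14$ any ball of radius $<\frac18$ misses one of them --- two charts rather than your family of strips, though both arguments are valid.
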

\begin{proof}
If $\kappa_M\equiv 1$ then $M$ is the round sphere $\BS^2$ and we can consider the covering $\BS^2=U_1\cup U_2$ where $U_1,U_2$ are the complements of the north and south poles respectively. The statement then holds for $k=2$ and for any $\delta<\frac\pi 2$.

If $\kappa_M\equiv 0$ then $M$ is a flat torus with volume $1$. In particular there is a geodesic $\gamma\subset M$ of at most length $2$. The complement $U_1=M\setminus\gamma$ is a flat annulus. Notice that the central curve $\gamma'$ of $A$ is also a geodesic parallel to $\gamma$ in $M$ and that $\gamma$ and $\gamma'$ are at least at distance $\frac 14$. Set $U_2=M\setminus\gamma'$ and notice that the covering $M=U_1\cup U_2$ satisfies the claim for $k=2$ and for all $\delta<\frac 18$.

It remains to consider the case that $M$ is a hyperbolic surface. At this point we observe that there are two positive constants $\mu>\mu'$ such that the following holds:
\begin{itemize}
\item The $\mu$-thin part $M^{<\mu}=\{x\in M\vert\inj(M,x)<\mu\}$ has at most $3g(M)-3$ components and each one of them is homeomorphic to an annulus.
\item Every point $x\in M^{\ge\mu}=M\setminus M^{<\mu}$ in the $\mu$-thick part of $M$ is at least at distance $\mu'$ of $M^{<\mu'}$.
\end{itemize}

\begin{bem}
Concrete values for the constants $\mu$ and $\mu'$ can be computed using standard hyperbolic trigonometry but the reader who is only interested in their existence - as we are ourselves - can take $\mu$ to be the 2-dimensional Margulis constant \cite{Benedetti-Petronio} and take $\mu'=\frac 13\mu$.
\end{bem}

The cover of $M$ will consist of the components of $M^{<\mu}$ and of a finite collection of balls. To choose the centers of the balls take a maximal $\frac{\mu'}8$-separated set of points $x_1,\dots,x_s$ with $\inj(M,x_i)\ge\frac{\mu'}2$ for each $i$. The balls of radius $\frac{\mu'}4$ around these points are embedded and cover an open set containing $M^{\ge\mu'}$. In particular, the collection of annuli in $M^{<\mu}$ and of the balls $B(x_i,\frac{\mu'}4,M)$ are a covering and obviously each component is biholomorphic to a subset of $\BC$. 

Moreover, since the balls of radius $\frac{\mu'}{16}$ centered at $x_1,\dots,x_s$ are disjoint and each one of them has volume $\vol(B(0,\frac{\mu'}{16},\BH^2))$, it follows that each point in $M$ belongs to at most $\frac{\vol(B(0,\frac{\mu'}4,\BH^2)}{\vol(B(0,\frac{\mu'}{16},\BH^2)}$ balls. Since on the other hand each point is contained in at most one component of $M^{<\mu}$ it follows that the cover has multiplicity bounded by some universal constant $k$.

Finally, if $x$ is a point in $M^{<\mu}$ with $d(x,M^{\ge\mu})>\frac{\mu'}2$ then $B(x,\frac{\mu'}8,M)$ is contained in a connected component of $M^{<\mu}$. On the other hand, if $x$ is a point in $M^{\ge\mu'}$ with $d(x,M^{<\mu'})>\frac{\mu'}2$ then there is $x_i$ with $d_M(x,x_i)<\frac{\mu'}8$ and hence with $B(x,\frac{\mu'}8,M)\subset B(x_i,\frac{\mu'}4,M)$. This shows that the covering has at least Lebesgue number $\frac{\mu'}8$, as we needed to prove. This concludes the proof of Lemma \ref{covering-surface}.
\end{proof}

\section{Proof of Theorem \ref{sat1}}\label{sec:sat1}
As the reader surely suspects, we prove now Theorem \ref{sat1}. 

\begin{named}{Theorem \ref{sat1}}
Given $K\ge 1$ and a closed Riemannian manifold $M$ of dimension $d\ge 3$, let $(M_i)\subset\CQ(M,K)$ be a sequence with distributional limit $\lambda$. If $\vol(M_i)\to\infty$, then the set of those $(X,x)\in\CH$ such that $X$ is a Riemannian manifold $K$-quasi-conformally equivalent to $\BR^d$ or $\BR^d\setminus\{0\}$ has full $\lambda$-measure. 
\end{named}

For the convenience of the reader we recall the properties of the manifolds $M_i$ hidden by the notation. For each $i$, $M_i$ is a Riemannian manifold with pinched sectional curvature $\vert\kappa_{M_i}\vert\le 1$ and there is a $K$-quasi-conformal homeomorphism 
$$f_i:M_i\to M$$
which we consider fixed from now on. Recall that the injectivity radius of the manifolds $M_i$ is uniformly bounded from below by Theorem \ref{no-collapse}. Since our results are invariant under scaling each one of the manifolds $M_i$ by a bounded amount greater than one we can assume without loss of generality that 
$$\inj(M_i)\ge 10$$
for all $i$. 

Before going any further, fix for each $i$ a maximal 1-net $\CN_i\subset M$ and notice that the curvature and injectivity radius bounds imply that there is a constant $C\ge 1$ such that 
\begin{equation}\label{eq:volumeballs}
\frac 1C\le \vol_{M_i}(B(x,1/2,M_i)) < \vol_{M_i}(B(x,1,M_i))\le C
\end{equation}
for all $i$ and all $x\in M_i$. Since the balls of radius $1/2$ centered at points of $\CN_i$ are pairwise disjoint and $\CN_i$ is a maximal 1-net, we have
\begin{equation}\label{eq:volumenet}
\frac 1{C}\vol_{M_i}(M_i)\le\vert\CN_i\vert\le C\vol_{M_i}(M_i)
\end{equation}
In particular, the cardinalities of the nets $\CN_i$ tend to $\infty$ when $i$ grows.

\begin{lem}\label{lem:generic}
In the situation of Theorem \ref{sat1}, $\lambda$-almost every point $(X,x)\in\CH$ is such that $X$ is isometric to the Gromov-Hausdorff limit of a subsequence of a sequence $(M_i,x_i)$ with $x_i\in\CN_i$ satisfying:
\begin{enumerate}
\item For every $R>0$ we have $\lim_{i\to\infty}\diam_M(f_i(B(x_i,R,M_i)))=0$.
\item For every $\delta\in(0,1)$ there are $s$ and $i_s$ such that $f_i(x_i)$ is not $(\delta,s)$-supported in $f_i(\CN_i)\subset M$ for all $i\ge i_s$.
\end{enumerate}
\end{lem}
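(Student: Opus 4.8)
The strategy is to verify that the two sets of "good" base points each carry almost full $\lambda$-measure, and then combine this with Lemma \ref{lem-generic}. For property (1), the key input is the Contraction Proposition \ref{prop:contraction}. Fix $R\ge 1$ and $\varepsilon>0$. Applying Proposition \ref{prop:contraction} to the $K$-quasi-conformal map $f_i\colon M_i\to M$ and the maximal $1$-net $\CN_i$, the number of points $p\in\CN_i$ with $\diam_M(f_i(B(p,R,M_i)))>\varepsilon$ is at most $C(d,K,\kappa)\vol_M(M)R^d\varepsilon^{-d}e^{(d-1)R}$, a quantity independent of $i$. Since $|\CN_i|\ge \frac1C\vol_{M_i}(M_i)\to\infty$ by \eqref{eq:volumenet}, the \emph{fraction} of such points tends to $0$ as $i\to\infty$. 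Taking a countable sequence $R\to\infty$ and $\varepsilon\to 0$ and intersecting, the set $U_i^{(1)}\subset\CN_i$ of points satisfying condition (1) (in the limiting sense) has $\frac{|U_i^{(1)}|}{|\CN_i|}\to 1$, hence $\vol_{M_i}(\bigcup_{p\in U_i^{(1)}}B(p,1/2,M_i))\ge (1-o(1))\cdot\frac1C\vol_{M_i}(M_i)$. A small bookkeeping argument with a diagonal sequence over the countable family of pairs $(R,\varepsilon)$ is needed here to pass from "for each $R,\varepsilon$, density $\to 1$" to "there is a set of density $\to 1$ on which \emph{all} the conditions hold eventually."

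For property (2), the input is the manifold Benjamini--Schramm lemma, Lemma \ref{bs1}, applied to the finite set $C=f_i(\CN_i)\subset M$. Fix $\delta\in(0,1)$ and $s\ge 2$. By Lemma \ref{bs1}, the number of $(\delta,s)$-supported points in $f_i(\CN_i)$ is at most $c(M,\delta)\frac{|f_i(\CN_i)|}{s}=c(M,\delta)\frac{|\CN_i|}{s}$ (using that $f_i$ is injective). Since $|\CN_i|\to\infty$, for each fixed $\delta$ and each fixed $s$ the fraction of $p\in\CN_i$ with $f_i(p)$ being $(\delta,s)$-supported is at most $c(M,\delta)/s$, uniformly in $i$. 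Now the monotonicity remarks preceding the Benjamini--Schramm lemma (if a point is $(\delta,s)$-supported it is $(\delta',s)$-supported for $\delta'<\delta$, and $(\delta,s')$-supported for $s'<s$) let us run a single diagonal argument: choosing $\delta_k\downarrow 0$ and $s_k\uparrow\infty$ suitably, the set $U_i^{(2)}\subset\CN_i$ of points failing to be $(\delta_k,s_k)$-supported in $f_i(\CN_i)$ for all $k\le k(i)$, with $k(i)\to\infty$ slowly, again has density $\to 1$ in $\CN_i$. This is exactly condition (2): for every $\delta$ there is $s$ (namely any $s\ge s_k$ with $\delta_k<\delta$) and an $i_s$ beyond which $f_i(x_i)$ is not $(\delta,s)$-supported.

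Finally, set $U_i=\bigcup_{p\in U_i^{(1)}\cap U_i^{(2)}}B(p,1/2,M_i)\subset M_i$. By the two density statements and \eqref{eq:volumenet}--\eqref{eq:volumeballs}, $\vol_{M_i}(U_i)\ge (1-o(1))\vol_{M_i}(M_i)$, so for every $b<1$ we have $\vol_{M_i}(U_i)\ge b\,\vol_{M_i}(M_i)$ for all large $i$. Lemma \ref{lem-generic} then says the set of $(X,x)\in\CH$ arising as a subsequential Gromov--Hausdorff limit of $(M_i,p_i)$ with $p_i\in U_i$ has $\lambda$-measure at least $b$. Since every such $p_i$ lies within distance $1/2$ of a point $x_i\in U_i^{(1)}\cap U_i^{(2)}\subset\CN_i$, and the bounded geometry lets us replace $p_i$ by $x_i$ without changing the limit (the pointed GH limit is insensitive to moving the basepoint a bounded distance, after passing to a further subsequence, since a uniformly bounded shift of basepoint in spaces with a uniform two-sided curvature bound and injectivity radius bound converges), the limit $(X,x)$ is also the limit of $(M_i,x_i)$ with $x_i\in\CN_i$ satisfying (1) and (2). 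Letting $b\to 1$ along a countable sequence gives full $\lambda$-measure. The main obstacle is the careful diagonalization that turns the family of "fraction $\to 0$" estimates (one per parameter value) into a single set of basepoints of density $\to 1$ on which \emph{all} conditions hold in the limit; everything else is an application of the already-established Proposition \ref{prop:contraction}, Lemma \ref{bs1}, and Lemma \ref{lem-generic}.
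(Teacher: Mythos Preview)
Your proposal is correct and follows essentially the same route as the paper: both arguments use Proposition~\ref{prop:contraction} to control the fraction of net points violating~(1), Lemma~\ref{bs1} to control the fraction violating~(2), and then feed the resulting ``good'' sets into Lemma~\ref{lem-generic}. The only cosmetic difference is packaging: the paper indexes the bad sets by a single integer parameter $n$ (taking $\delta=1/n$ and a sequence $s_n\to\infty$ with $c(M,n^{-1})/s_n\to 0$), defines $U_i(n)=M_i\setminus\bigcup_{p\in A_i(n)}B(p,1,M_i)$ as the complement of unit balls around bad net points, and shows $\lambda(U_n)\to 1$; you instead build good subsets $U_i^{(1)}\cap U_i^{(2)}\subset\CN_i$ directly and surround them by half-balls. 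Your explicit diagonalization over $(R,\varepsilon)$ and $(\delta_k,s_k)$ makes precise a step the paper leaves implicit.
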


\begin{proof}
Choose sequences $\epsilon_i\to 0$, $r_i\to\infty$ and $s_n\to\infty$ satisfying 
$$\frac{e^{(d-1)r_i}r_i^d}{\epsilon_i^d\vert\CN_i\vert}\to 0\ \ \hbox{and}\ \ \frac{c(M,n^{-1})}{s_n}\to 0$$
where $c(M,n^{-1})$ is the constant provided by Lemma \ref{bs1}. Denote by $A_i(n)$ the subset of $\CN_i$ consisting of those points $p\in\CN_i$ for which one of the following is satisfied:
\begin{itemize}
\item $\diam(f_i(B(p,r_i,M_i)))\ge\epsilon_i$.
\item $f_i(p)$ is $(\frac 1n,s_n)$-supported in $f_i(\CN_i)$.
\end{itemize}
Consider also the set 
$$U_i(n)=M_i\setminus \bigcup_{p\in A_i(n)}B(p,1,M_i)$$ 
and observe that Lemma \ref{lem:generic} follows once we prove:
\medskip

\noindent{\bf Claim.} 
For $\lambda$-almost every $(X,x)\in\CH$ the following holds: For every $n_0$ there are $n\ge n_0$ and a sequence $(M_i,p_i)$ converging to $(X,x)$ with $p_i\in U_i(n)$ for all sufficiently large $i$. 
\medskip

To prove that this is the case we bound first the cardinality of $A_i(n)$. To begin with, Proposition \ref{prop:contraction} and the choice of $r_i$ and $\epsilon_i$ imply that
$$\lim_{i\to\infty}\frac{\vert\{p\in\CN_i\vert\diam_M(f_i(B(p,r_i,M_i)))\ge\epsilon_i\}\vert}{\vert\CN_i\vert}=0.$$
In particular for every $n$, we can choose $i_n$ so that for every $i\ge i_n$,
\[ \vert\{p\in\CN_i\vert\diam_M(f_i(B(p,r_i,M_i)))\ge\epsilon_i\}\vert \le \frac{c(M,n^{-1})}{s_n} \vert \CN_i \vert . \]
On the other hand, Lemma \ref{bs1} asserts that the set of all those $p\in\CN_i$ such that $f_i(p)$ is $(n^{-1},s_n)$-supported in $f_i(\CN_i)\subset M$ has at most $\frac{c(M,n^{-1})}{s_n}\vert\CN_i\vert$ elements. It follows that for all $i \ge i_n$ 
$$\vert A_i(n)\vert\le 2\frac{c(M,n^{-1})}{s_n}\vert\CN_i\vert\ \ \hbox{for all}\ \ i\ge i_n.$$
Now \eqref{eq:volumeballs} and \eqref{eq:volumenet} imply that for any such $i$
$$\vol_{M_i}(U_i(n))\ge\left(1-2C^2\frac{c(M,n^{-1})}{s_n}\right)\vol_{M_i}(M_i).$$
It follows hence from Lemma \ref{lem-generic} that the set $U_n$ of those $(X,x)\in\CH$ which are limits of subsequences of $(M_i,x_i)$ with $x_i\in U_i(n)$ satisfies
$$\lambda(U_n)\ge 1-2C^2\frac{c(M,n^{-1})}{s_n}$$
By the choice of $s_n$ we deduce that $\lambda(\bigcup_n U_n)=1$. The claim follows.
\end{proof}

After these preliminary considerations we can launch the proof of our main theorem:

\begin{proof}[Proof of Theorem \ref{sat1}]
By Lemma \ref{lem:generic} it suffices to prove that $X$ is quasi-conformal to $\BR^d$ or $\BR^d\setminus\{0\}$ if $(X,x)$ is a limit of a subsequence, say the whole sequence, of $(M_i,p_i)$ where $p_i\in\CN_i$ and satisfies:
\begin{enumerate}
\item For every $R>0$ we have $\diam_M(f_i(B(p_i,R,M_i)))\to 0$.
\item For every $\delta\in(0,1)$ there are $s$ and $i_s$ such that $f_i(p_i)$ is not $(\delta,s)$-supported in $f_i(\CN_i)\subset M$ for $i\ge i_s$.
\end{enumerate}
By Gromov's $C^{1,1}$-compactness theorem, the limit $X$ is a Riemannian manifold and the convergence takes place in the pointed $C^{1,\alpha}$-topology for all $\alpha$. In particular, there is an exhaustion of $X$ by nested open bounded connected subsets
$$\Omega_1\subset\bar\Omega_1\subset\Omega_2\subset\dots\subset X=\bigcup_{i=1}^\infty\Omega_i$$
and a sequence of $L_i$-bi-lipschitz embeddings 
$$\phi_i:(\Omega_i,x)\hookrightarrow(M_i,p_i)$$
with $L_i\to 1$.

For each fixed $j$ and all sufficiently large $i$ we consider the map 
$$f_i\circ\phi_i:(\Omega_j,x)\to(M,f_i(p_i)).$$
Fix a point $x_i\in\D B(x,1,X)$ so that 
$$d_M((f_i \circ \phi_i)(x),(f_i \circ \phi_i)(x_i)) = \max_{z\in B(x,1,X)} d_M((f_i \circ \phi_i)(p_i),(f_i \circ \phi_i)(z))=\colon r_i$$
and notice that $r_i\to 0$ by (1). Consider the scaled manifold $(\frac{1}{r_i}M,(f_i \circ \phi_i)(x))$ pointed at the image of the base point $x\in X$.

Since $r_i$ tends to $0$ we have that the maximum of the absolute value of the sectional curvature of the manifold $\frac{1}{r_i}M$ tends to $0$ and that its injectivity radius tends to $\infty$. It follows that $(\frac{1}{r_i}M,(f_i \circ \phi_i)(x))$ converges in the Gromov-Hausdorff topology to $(\BR^d,0)$. In other words, there are sequences $(L_i')$ and $(R_i)$ tending to $1$ and to $\infty$ respectively such that for all $i$ there exists a $L_i'$-bilipschitz embedding 
$$\varphi_i \colon B((f_i\circ\phi_i)(x),R_i,\frac{1}{r_i}M)\to(\BR^n,0).$$
Notice that by the choice of $r_i$, the diameter of $(f_i\circ\phi_i)(\Omega_j)$ is bounded in $\frac{1}{r_i}M$ independently of $i$. We may thus assume, passing to a subsequence if necessary, that
$$(f_i\circ\phi_i)(\Omega_j)\subset B((f_i\circ\phi_i)(x),R_i,\frac{1}{r_i}M)$$
for all $i$. Notice also that 
$$d_{\BR^n}(0,(\varphi_i\circ f_i\circ\phi_i)(x_i))=d_{\BR^n}((\varphi_i\circ f_i\circ\phi_i)(x),(\varphi_i\circ f_i\circ\phi_i)(x_i))\to 1$$
as $i\to\infty$. We can summarize the situation, for every fixed $j$ and all $i$ large enough, in the following diagram
$$\xymatrix{(M_i,p_i)\ar[r]^{f_i} & (M,f_i(p_i)) \ar[r]^{\Id} & (\frac 1{r_i} M_i,f_i(p_i))\\
(\phi_i(\Omega_j),p_i)\ar@{^{(}->}[u]\ar[r]^{f_i} & ((f_i\circ\phi_i)(\Omega_j),f_i(p_i)) \ar@{^{(}->}[u]\ar@{^{(}->}[r] & (B(f_i(p_i),R_i,\frac{1}{r_i}M),f_i(p_i))\ar[d]^{\varphi_i}\ar@{^{(}->}[u]\\
(\Omega_j,x)\ar[u]^{\phi_i}\ar@{^{(}->}[d]\ar[rr]^{\varphi_i\circ f_i\circ\phi_i} & & (\BR^d,0)\\
(X,x) & { } & { } }$$
The map $\varphi_i\circ f_i\circ\phi_i$ is the composition of a $K$-quasi-conformal map and two $L_i$- and $L_i'$-bilipschitz maps. Since $L_i,L_i'\to 1$ by construction, we have that $\phi_i\circ f_i\circ\psi_i$ is $(K+\epsilon)$-quasi-conformal for all $\epsilon>0$ and all sufficiently large $i$. Moreover, we have that $\phi_i\circ f_i\circ\psi_i$ maps the two points $x,x_i\in\Omega_1\subset\Omega_j$ with $d_X(x,x_i)=1$ to two points whose distance tends to $1$. It follows hence from Corollary \ref{cor:qc_limit} that, up to passing to a subsequence and possibly reducing the sets $\Omega_i$, the maps
$$\varphi_i\circ f_i\circ\phi_i:\Omega_j\to\BR^d$$
converge to a $K$-quasi-conformal embedding 
$$F:X=\bigcup_j\Omega_j\to\BR^d$$
To conclude the proof of Theorem \ref{sat1}, it remains to be shown that $\BR^d\setminus F(X)$ consists of at most a single point. Before launching the proof of this fact notice that, passing to a further subsequence, we obtain a maximal 1-net $\CN\subset X$ with $\CN\cap U$ equal to the Hausdorff limit of the sets $U\cap\phi_i^{-1}(\CN_i)$ for every $U\subset X$ open and bounded whose adherence does not meet $\CN$. 
\medskip

\noindent{\bf Claim.} {\em $F(\CN)$ has at most one accumulation point in $\BR^d$.}

\begin{proof}[Proof of the claim]
So far we have only used the fact that the base points $p_i$ satisfy condition (1) above. Now it comes the time to use that for every $\delta\in(0,1)$ there are $s$ and $i_s$ such that $f_i(p_i)$ is not $(\delta,s)$-supported in $f_i(\CN_i)\subset M$ for all $i\ge i_s$. Observe that being $(\delta,s)$-supported is invariant under scaling. In particular, we have that again for every $\delta\in(0,1)$ there are $s$ and $i_s$ such that $f_i(p_i)$ is not $(\delta,s)$-supported in $f_i(\CN_i)\subset\frac 1{r_i}M$ for all $i\ge i_s$. Since the bilipschitz constant of the maps $\varphi_i$ tends to $1$, this also implies that for every $\delta\in(0,1)$ there are $s$ and $i_s$ such that 
\begin{itemize}
\item[(*)] $0=f_i(p_i)$ is not $(\delta,s)$-supported in $(\varphi_i\circ f_i)(\CN_i)\subset\BR^d$ for $i\ge i_s$. 
\end{itemize}
We claim that this implies that $F(\CN)$ has at most one accumulation point but first we observe that the convergence of the maps $\varphi_i\circ f_i\circ\phi_i$ to the open map $F$ implies that there is some uniform $\epsilon$ with 
\begin{equation}\label{eq-unisep}
\rho_{\BR^d,(\varphi_i\circ f_i)(\CN_i)}(0)\ge\epsilon\ \ \hbox{for all}\ i.
\end{equation}
Arguing by contradiction suppose that there are at least two accumulation points $z,z'\in\BR^d$ of $F(\CN)$ and choose $\delta\in(0,1)$ so that 
\begin{equation}\label{eq;scales}
d_{\BR^n}(0,z),d_{\BR^n}(0,z')< \frac14 \delta^{-1}\epsilon,\ \ d_{\BR^n}(z,z')>4\delta\epsilon
\end{equation}
where $\epsilon$ satisfies \eqref{eq-unisep}. Since $z,z'$ are accumulation points of $F(\CN)$ and since every point in $F(\CN)$ is a limit of points in $(\varphi_i\circ f_i)(\CN_i)$ we can find for all $s$ some $i_s$ such that for all $i\ge i_s$ we have:
\begin{align}
\label{a2}&\vert (\varphi_i\circ f_i)(\CN_i)\cap B(z,\delta\epsilon,\BR^d)\vert> s\ \ \hbox{and}\\
\label{a3}&\vert (\varphi_i\circ f_i)(\CN_i)\cap B(z',\delta\epsilon,\BR^d)\vert> s
\end{align}
Now, \eqref{eq-unisep}, \eqref{eq;scales}, \eqref{a2} and \eqref{a3} show that $0$ is $(\delta,s)$-supported in $(\varphi_i\circ f_i)(\CN_i)\subset\BR^d$ for all $i$ large enough; compare with figure  \ref{fig1}. This contradiction to (*) shows that $F(\CN)$ has at most a single accumulation point, as we needed to prove.
\end{proof}

Having ruled out the possibility that the image $F(\CN)$ of the maximal 1-net $\CN$ under the quasi-conformal map $F$ has two accumulation points in $\BR^d$, we deduce from Lemma \ref{lemma:density} that $F(X)$ misses at most a point in  $\BR^d$, as we needed to prove. This concludes the proof of Theorem \ref{sat1}.
\end{proof}

\section{Riemannian parabolicity and the proof of Theorem \ref{cheeger}}\label{sec:cheeger}
A Riemannian manifold $X$ is {\em $p$-parabolic} for some $1<p<\infty$ if for all compact sets $E\subset X$ 
$$\mathrm{cap}_p(E,X) := \inf_{u} \int_X |\DD u|^p \vol = 0$$
where the infimum is taken over all compactly supported functions $u\in C^\infty_0(X)$ with $u(x)=1$ for $x\in E$. Equivalently
$$\Mod_p(\Gamma_\infty(X))=0$$ 
where $\Gamma_\infty(X)$ is the family of paths in $X$ leaving every compact set; see for example \cite[Theorem 5.4]{HP}. Having this characterization at our disposal it is easy to observe that we may equivalently say that $X$ is $p$-parabolic if there exists a continuum $E\subset X$ so that $\mathrm{cap}_p(E,X) = 0$. Indeed, let $E$ and $E'$ be continua in $X$ and suppose that $\mathrm{cap}_p(E,X)>0$. Let $\Gamma_\infty(X;E)$ and $\Gamma_\infty(X;E')$ be subfamilies of $\Gamma_\infty(X)$ consisting of paths meeting $E$ and $E'$, respectively. Since $
\Mod_p(\Gamma_\infty(X;E))=\mathrm{cap}_p(E,X)$, by the argument of \cite[Lemma 3.2]{MR} we have $\Mod_p(\Gamma_\infty(X;E'))>0$. Thus also $\mathrm{cap}_p(E';X)>0$.

Using either definition it is also easy to see directly that $d$-parabolicity is invariant under quasi-conformal homeomorphisms. Furthermore, the modulus inequality \eqref{eq:cap} readily yields that $\BR^d$ and $\BR^d\setminus\{0\}$ are $d$-parabolic.
 
In the light of all this, Theorem \ref{sat1} immediately implies:

\begin{kor}\label{cor:parabolicity}
Let $(M_i)$ be a sequence of Riemannian $d$-manifolds as in Theorem \ref{sat1}. The set of those $(X,x)\in \CH$ that are $d$-parabolic has $\lambda$-full measure.\qed
\end{kor}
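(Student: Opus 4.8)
The plan is to read the corollary off directly from Theorem~\ref{sat1} together with the facts assembled in the paragraphs immediately preceding it. Those paragraphs already establish that $d$-parabolicity, characterized by $\Mod_d(\Gamma_\infty(X))=0$ where $\Gamma_\infty(X)$ is the family of paths in $X$ leaving every compact set, is invariant under $d$-quasi-conformal homeomorphisms (an immediate consequence of \eqref{eq:QC_Gamma}, since such a map carries $\Gamma_\infty$ to $\Gamma_\infty$ and distorts $\Mod_d$ only boundedly), and that the capacity estimate \eqref{eq:cap} forces both $\BR^d$ and $\BR^d\setminus\{0\}$ to be $d$-parabolic. Consequently every Riemannian $d$-manifold that is $K$-quasi-conformally equivalent to $\BR^d$ or to $\BR^d\setminus\{0\}$ is itself $d$-parabolic.

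Granting this, I would simply invoke Theorem~\ref{sat1}: it asserts that the set $\CA\subset\CH$ of those $(X,x)$ for which $X$ is $K$-quasi-conformally equivalent to $\BR^d$ or $\BR^d\setminus\{0\}$ satisfies $\lambda(\CA)=1$. By the previous paragraph, $\CA$ is contained in the set $\CP$ of $d$-parabolic $(X,x)\in\CH$; hence $\lambda(\CP)\ge\lambda(\CA)=1$, which is the assertion.

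There is essentially no obstacle beyond bookkeeping. The one point that might warrant a remark is that $\CP$ need not obviously be a Borel subset of $\CH$; but this is harmless, since $\CP$ contains the measurable full-measure set $\CA$ (which was built in the proof of Theorem~\ref{sat1} out of countably many measurable conditions) and therefore has $\lambda$-full (outer) measure, which is all that is claimed. In short, all of the content lies in Theorem~\ref{sat1}, and this is precisely why the corollary was stated with no separate proof.
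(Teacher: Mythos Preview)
Your argument is correct and matches the paper's exactly: the corollary is stated with a \qed\ and no separate proof, preceded by the sentence ``In the light of all this, Theorem~\ref{sat1} immediately implies,'' where ``all this'' refers precisely to the quasi-conformal invariance of $d$-parabolicity and the $d$-parabolicity of $\BR^d$ and $\BR^d\setminus\{0\}$ that you invoke. Your extra remark on the measurability of $\CP$ is a reasonable caution but not something the paper addresses.
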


Before deducing Theorem \ref{cheeger} from Corollary \ref{cor:parabolicity} we discuss briefly the following fact well-known to experts:

\begin{prop}\label{par-che}
Suppose that $X$ is a Riemannian manifold with $\vert \kappa_X\vert\le 1$ and with $\inj(X)>0$. If $X$ is parabolic then $X$ has vanishing Cheeger constant $h(X)=0$.
\end{prop}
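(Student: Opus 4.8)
The plan is to prove the contrapositive: assuming $h(X)>0$, I will show that $X$ is not $d$-parabolic. The engine is that a positive Cheeger constant produces a \emph{linear} isoperimetric inequality, which upgrades to a Poincar\'e-type inequality $\Vert u\Vert_{L^d}\le C\Vert\DD u\Vert_{L^d}$ valid for all $u\in C_0^\infty(X)$, and this last inequality is manifestly incompatible with $d$-parabolicity.

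First I would reduce to the case $\vol_X(X)=\infty$. A complete Riemannian manifold with $\vert\kappa_X\vert\le 1$ and $\inj(X)>0$ is, if non-compact, of infinite volume: one can find infinitely many pairwise disjoint metric balls of a fixed small radius, and by volume comparison each has volume bounded below; the compact case of the proposition being immediate, we may assume $\vol_X(X)=\infty$. Then every $A\subset X$ with $\vol_X(A)<\infty$ satisfies $\vol_X(X\setminus A)=\infty$, so the definition of $h(X)$ gives
$$\mathrm{Area}(\partial A)\ge h(X)\,\vol_X(A)$$
for every such $A$ whose boundary is a smooth hypersurface. Now fix $u\in C_0^\infty(X)$; after replacing $u$ by $\vert u\vert$ we may assume $u\ge 0$. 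By Sard's theorem, for almost every $t>0$ the set $A_t=\{u^d>t\}$ has compact closure and smooth boundary, so the coarea formula together with Cavalieri's principle gives
$$d\int_X u^{d-1}\vert\DD u\vert\,\vol_X=\int_X\bigl\vert\DD(u^d)\bigr\vert\,\vol_X=\int_0^\infty\mathrm{Area}(\partial A_t)\,dt\ge h(X)\int_0^\infty\vol_X(A_t)\,dt=h(X)\int_X u^d\,\vol_X.$$
Next, H\"older's inequality with exponents $d/(d-1)$ and $d$ bounds the left side by $d\bigl(\int_X u^d\,\vol_X\bigr)^{(d-1)/d}\bigl(\int_X\vert\DD u\vert^d\,\vol_X\bigr)^{1/d}$, and dividing through yields
$$\Vert u\Vert_{L^d(X)}\le\frac{d}{h(X)}\,\Vert\DD u\Vert_{L^d(X)}\qquad\text{for all }u\in C_0^\infty(X).$$

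Finally, to contradict $d$-parabolicity I would fix a small closed ball $E=\overline{B(x_0,r_0,X)}$, so that $\vol_X(E)>0$. For any $u\in C_0^\infty(X)$ with $u=1$ on $E$ (truncating, we may also take $0\le u\le 1$), the inequality above gives $\Vert\DD u\Vert_{L^d(X)}\ge (h(X)/d)\Vert u\Vert_{L^d(X)}\ge (h(X)/d)\vol_X(E)^{1/d}$, hence
$$\int_X\vert\DD u\vert^d\,\vol_X\ge\Bigl(\frac{h(X)}{d}\Bigr)^d\vol_X(E)>0.$$
Taking the infimum over all admissible $u$ shows $\mathrm{cap}_d(E,X)>0$, so $X$ is not $d$-parabolic, which is the desired contradiction. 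The only points requiring care are the coarea/Sard justification (entirely routine) and the reduction to infinite volume, and neither is a genuine obstacle; alternatively one could discretize $X$ by a bounded-valence net graph \`a la Kanai and argue there, but the direct argument is more elementary.
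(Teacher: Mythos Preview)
Your argument is correct and follows the same contrapositive strategy as the paper---show that $h(X)>0$ forces positive $d$-capacity of a ball---but your implementation of the key step is more direct. The paper notes that the linear isoperimetric inequality $\mathrm{Area}(\partial\Omega)\ge h(X)\vol(\Omega)$ upgrades, for domains of volume at least $1/h(X)$, to $h(X)\vol(\Omega)\le\mathrm{Area}(\partial\Omega)^{m/(m-1)}$, and then invokes a lemma of Holopainen to pass from this Sobolev-type isoperimetry (together with $\inj(X)>0$) to $\mathrm{cap}_d(B,X)>0$. You instead run coarea on $u^d$ and H\"older to obtain the $L^d$-Poincar\'e inequality $\Vert u\Vert_{L^d}\le (d/h(X))\Vert\DD u\Vert_{L^d}$ for all compactly supported $u\ge 0$, from which the capacity lower bound $(h(X)/d)^d\vol_X(E)$ is immediate; this is self-contained and uses the geometric hypotheses only to secure $\vol_X(X)=\infty$ in the non-compact case. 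Two cosmetic points: replacing $u$ by $\vert u\vert$ takes you out of $C^\infty_0$, but since you only apply the inequality to nonnegative test functions in the capacity estimate this is harmless; and ``the compact case being immediate'' should be read with the convention (used tacitly in the paper's proof) that $h(X)=\inf_\Omega\mathrm{Area}(\partial\Omega)/\vol(\Omega)$ over bounded $\Omega$, which vanishes for closed $X$.
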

\begin{proof}
Suppose that 
$$h(X) = \inf_{\Omega} \frac{\mathrm{Area}_X(\partial \Omega)}{\vol_X(\Omega)} >0$$
where the infimum is taken over all bounded domains $\Omega\subset X$. Noting that $\frac m{m-1}>1$ it follows that 
$$h(X) \vol_X(\Omega) \le \mathrm{Area}_X(\D\Omega)^{\frac {m}{m-1}} $$
for all $m> 1$ and all domains $\Omega\subset X$ of volume at least $\frac 1{h(X)}$. Since $\inj(X)>0$, this implies that $\mathrm{cap}_d(B, X)>0$ for every closed ball $B\subset X$ (see for example \cite[Lemma 2.7]{Holopainen}) implying that $X$ is not $d$-parabolic. This contradiction yields that $h(X)=0$.
\end{proof}

We are now ready to prove Theorem \ref{cheeger}:

\begin{named}{Theorem \ref{cheeger}}
Fix $K\ge 1$ and a closed Riemannian manifold $M$ of dimension $d\ge 3$. If $(M_i)\subset\CQ(M,K)$ is a sequence such that $\vol(M_i)\to\infty$, then 
$$\lim_{i\to\infty}h(M_i)= 0$$
where $h(M_i)$ is the Cheeger constant of $M_i$.
\end{named}
\begin{proof}
Seeking a contradiction, assume that there is a sequence $(M_i)\subset\CQ(M,K)$ with $\vol(M_i)\to \infty$ and $h(M_i)\ge\epsilon>0$ for all $i$. Passing to a subsequence we may assume that the sequence $(M_i)$ converges in distribution to some measure $\lambda$ on $\CH$. By Corollary \ref{cor:parabolicity} we know hence that there are base points $p_i\in M_i$ such that $(M_i,p_i)$ converges in the pointed Gromov-Hausdorff topology to some $(X,x)$ with $X$ $d$-parabolic. Notice that $\inj(X)>0$ by Theorem \ref{no-collapse} and hence that $h(X)=0$ by Proposition \ref{par-che}.

Since $h(X)=0$ there exists a bounded domain $\Omega\subset X$ so that 
$$\mathrm{Area}_X(\partial \Omega) \le \frac\epsilon 2 \vol_X(\Omega).$$
Let $\Omega'\subset X$ be a further bounded domain with $\Omega\subset\Omega'$. Since, by Gromov's $C^{1,1}$-compactness theorem, $(M_i,p_i)$ converges to $(X,x)$ in the $C^{1,\alpha}$-topology, we have for all but finitely many $i$ an $L_i$-bilipschitz embedding
$$\psi_i \colon \Omega' \to M_i$$
with $L_i\to 1$. We have hence 
$$h(M_i)\le\frac{\mathrm{Area}_{M_i}(\phi_i(\D\Omega))}{\vol_{M_i}(\phi_i(\Omega))}\le\frac{L_i^{d-1}\mathrm{Area}_X(\D\Omega))}{L_i^{-d}\vol_X(\Omega)}\le L_i^{2d-1}\frac\epsilon 2$$
This shows that $h(M_i)<\epsilon$ for all large $i$ contradicting our assumption. This proves Theorem \ref{cheeger}.
\end{proof}

\section{The 2-dimensional case}\label{sec:2dim}
In this section we consider the 2-dimensional case.

\begin{named}{Theorem \ref{sat12}}
Suppose that $(M_i)$ is a sequence of closed Riemannian surfaces with 
$$\vert\kappa_{M_i}\vert\le 1\ \hbox{and}\ \inj(M_i)>\epsilon>0$$ 
for all $i$. Suppose also that $(M_i)$ has distributional limit $\lambda$ and that 
$$\lim_{i\to\infty}\frac{g(M_i)+1}{\vol(M_i)}= 0$$
where $g(M_i)$ is the genus of $M_i$. Then $\lambda$ is supported by the set of Riemannian surfaces conformally equivalent to $\BC$ or $\BC^*$.
\end{named}
\begin{proof}
By assumption the surfaces $M_i$ have injectivity radius uniformly bounded from below. In particular we can, as in the proof of Theorem \ref{sat1}, scale them by a uniform amount and assume that $\inj(M_i)\ge 10$ for all $i$. Again as in the proof of Theorem \ref{sat1} we choose a maximal 1-net $\CN_i\subset M_i$ for all $i$. We also choose for each $i$ a uniformization $f_i:M_i\to\Sigma_i$ by what we mean that $f_i$ is conformal and $\Sigma_i$ is a Riemannian surface with sectional curvature $\kappa_{\Sigma_i}\equiv -1,0,1$.

As so often, the cases of $\kappa_{\Sigma_i}\equiv 0$ and $\kappa_{\Sigma_i}\equiv 1$, i.e. $M_i$ a torus and a sphere respectively are slightly particular but easier. We leave them to the reader and, for the sake of concreteness, assume from now on that $\Sigma_i$ is a hyperbolic surface for all $i$.

For each $i$ we fix a uniform atlas of $\Sigma_i$ as provided by Lemma \ref{covering-surface}, i.e.~an open covering 
$$\Sigma_i=U_i^1\cup\dots\cup U_i^{r_i}$$
with multiplicity $\le k$, such that for every $x\in\Sigma_i$ there is $j$ such that $B(x,\delta_0,\Sigma_i)\subset U_i^j$, and such that for all $i$ and $j$ there is a conformal embedding 
$$\varphi_i^j:U_i^j\hookrightarrow\BC$$
We stress the fact that $k$ and $\delta_0$ are independent of $i$. For example, the uniform bound on the multiplicity and Lemma \ref{bs2} imply that 
\begin{itemize}
\item[(*)] for every $\delta\in(0,1)$, every $s$ and every $i$ the set of those $x\in f_i(\CN_i)$ for which there is $j$ with $x\in U_i^j$ and such that $\phi_i^j(x)$ is $(\delta,s)$-supported in $\phi_i^j(f_i(\CN_i)\cap U_i^j)$ has cardinality at most $c(d,\delta)\frac{k\vert\CN_i\vert}s$ for $d=2$.
\end{itemize}

On the other hand, the closed hyperbolic surface $\Sigma_i$ has genus $g(M_i)$ and hence volume $4\pi(g(M_i)-1)$. In particular, the assumption in Theorem \ref{sat12} implies that
$$\lim_{i\to\infty}\frac{\vol(\Sigma_i)}{\vol(M_i)}= 0$$
Hence, Proposition \ref{prop:contraction} implies that there are $C$ and $\epsilon_0$ such that
\begin{itemize}
\item[(**)] for all $i$, $R$ and $\epsilon<\epsilon_0$ we have
$$\vert\{p\in\CN_i\vert\diam_{\Sigma_i}(f_i(B(p,R,M_i)))\ge\epsilon\}\vert\le C\vert\CN_i\vert \left( \frac R{\epsilon}\right)e^{R}$$
\end{itemize}
where we are also using the fact that $\vol(\Sigma_i) \le \vol(M_i) \le C' \vert \CN_i \vert$ for some uniform constant $C'$ and $i$ large.
Armed with (*) and (**) we can repeat word-by-word the proof of Lemma \ref{lem:generic} and show that $\lambda$-almost every point $(X,x)\in\CH$ is such that $X$ is isometric to the Gromov-Hausdorff limit of subsequence of a sequence $(M_i,x_i)$ with $x_i\in\CN_i$ satisfying:
\begin{enumerate}
\item For every $R>0$ we have $\lim_{i\to\infty}\diam_{\Sigma_i}(f_i(B(x_i,R,M_i)))=0$.
\item For every $\delta\in(0,1)$ there are $s$ and $i_s$ such that for all $i\ge i_s$ the following holds: if $f_i(x_i)\in U_i^j$ then $\phi_i^j(f_i(x_i))$ is not $(\delta,s)$-supported in $\phi_i^j(f_i(\CN_i)\cap U_i^j)\subset\BC$.
\end{enumerate}
We proceed now as in the proof of Theorem \ref{sat1}. To begin with, Gromov's $C^{1,1}$-compactness theorem implies that the limit $X$ is a Riemannian manifold and that the convergence takes place in the pointed $C^{1,\alpha}$-topology. As when proving Theorem \ref{sat1} we obtain, up to passing to subsequences, an exhaustion 
$$\Omega_1\subset\bar\Omega_1\subset\Omega_2\subset\dots\subset X=\bigcup_{i=1}^\infty\Omega_i$$
of $X$ by bounded open connected sets and for all $j$ a sequence of $L_i$-bi-lipschitz embeddings 
$$\phi_i:(\Omega_j,x)\hookrightarrow(M_i,p_i)$$
with $L_i\to 1$. Since the diameter of $\phi_i(\Omega_j)$ remains bounded we deduce from (1) that 
$$\lim_{i\to\infty}\diam_{\Sigma_i}((f_i\circ\phi_i)(\Omega_j))=0.$$
Since the sequence of coverings $\Sigma_i=\bigcup_jU_i^j$ have uniform Lebesgue number $\delta_0$, it follows that for all sufficiently large $i$ there is some member of the covering, say $U_i^1$, with $(f_i\circ\phi_i)(\Omega_j)\subset U_i^1$. The composition
$$(\varphi_i^1\circ f_i\circ\phi_i):\Omega_j\to\BC$$
of $f_i\circ\phi_i$ with the conformal embedding $\varphi_i^1:U_i^1\to\BC$ is the composition of an $L_i$-bilipschitz map with two conformal maps and hence is $L_i^2$-quasi-conformal. Composing $\varphi_i^1$ with a translation and a homothety we can assume that $\varphi_i^1\circ f_i\circ\phi_i$ maps always $x$ to $0$ and a fixed point $y\in\D B(x,1,X)$ to $1$. As in the proof of Theorem \ref{sat1}, Corollary \ref{cor:qc_limit} implies that up to passing to a subsequence the maps $\varphi_i\circ f_i\circ\phi_i$ converge to a $1$-quasi-conformal, i.e. conformal, embedding 
$$F:X\to\BC$$
Again as in the proof of Theorem \ref{sat1}, we may assume that the nets $\CN_i\subset M_i$ converge to a 1-net $\CN$ of $X$ and again it suffices to prove that $F(\CN)$ has at most an accumulation point in $\BC$. The same argument, in fact a bit easier, as in the proof of Theorem \ref{sat1} shows that if that were not the case, then there would exist $\delta\in(0,1)$ such that for all $s$ there is $i_s$ such that the point $(\varphi_i^1\circ f_i\circ\phi_i)(x)=(\varphi^1_i\circ f_i)(p_i)$ is $(\delta,s)$-supported in $(\varphi^1_i\circ f_i)(\CN_i)$ for all $i\ge i_s$. This would contradict (2). We have proved Theorem \ref{sat12}.
\end{proof}

\begin{bem}
For later use we observe that in the course of the proof of Theorem \ref{genusgbs} we in fact obtained that if $\CN_i$ is a maximal 1-net of $M_i$ then there is a sequence $(U_i)$ with $U_i\subset\CN_i$, with $\lim_{i\to\infty}\frac{\vert U_i\vert}{\vert\CN_i\vert}=1$ and such that any Gromov-Hausdorff limit $(X,x)$ of a subsequence of $(M_i,x_i)$ with $x_i\in U_i$ is such that $X$ is conformally equivalent to either $\BC$ or $\BC^*$.
\end{bem}

\noindent{\bf An example showing that Theorem \ref{sat12} fails in the absence of injectivity radius bounds.} We claim that {\em for all $\delta<1$ there is a sequence $M_i$ of Riemannian surfaces with curvature $\vert\kappa_{M_i}\vert\le 1$, homeomorphic to $\BS^2$, with $\vol(M_i)\to\infty$,  and with distributional limit satisfying $$\lambda\left(\{(X,x)\in\CH\vert\ \pi_1(X)\ \hbox{is not finitely generated}\}\right)>\delta$$}
To construct the desired sequence we proceed as in the example in section \ref{sec:weak}. Let $T$ be a trivalent tree, $t\in T$ be a vertex and denote by $T_i$ the ball in $T$ centered at $t$ and with radius $i$. For all $i$ there is a hyperbolic surface $N_i$ with totally geodesic boundary which has a pants decomposition with dual graph $T_i$ and such that all the involved interior curves corresponding to edges of $T_i$ have length $1$ and such that every boundary component has length $\frac 1i$. To get the surface $M_i$, we cap off each one of the boundary components by a suitable bubble with curvature in $[-1,1]$ and of volume less than $100$. Here, the word suitable means that for example there is a function $f:[0,1]\to[0,1]$ with $f(0)=0$ such that for each $i$ we have
$$\vol(\{x\in M_i\vert\inj_{M_i}(x)<\epsilon\})<f(\epsilon)\vol(M_i).$$
It follows that for any weak limit $\lambda$ of any subsequence of $(M_i)$ we have
$$\lambda\left(\{(X,x)\in\CH\vert\ X\ \hbox{is a manifold of dimension}\ 2\}\right)=1.$$
On the other hand, for any sequence of points $x_i\in N_i\subset M_i$ such that $(M_i,x_i)$ converges to a surface $(X,x)$ we have that $\pi_1(X)$ is not finitely generated. Since there is some $\delta_0>0$ such that $\vol(N_i)>\delta_0\vol(M_i)$, it follows that
$$\lambda\left(\{(X,x)\in\CH\vert\ \pi_1(X)\ \hbox{is not finitely generated}\}\right)>\delta_0.$$
In order to construct such sequences for arbitrary $\delta$ we endow the surface $N_i$ with a metric of constant curvature $-\epsilon$ where $\epsilon$ is a sufficiently small positive number. We leave the details to the reader.

\section{Parabolicity on graphs and sequences with sublinear genus growth}\label{sec:graphs}
In this section we prove Theorem \ref{genusgbs} and Corollary \ref{expanders}, but first we have to clarify what we mean by distributional limits of graphs: so far we have only considered such limits for Riemannian manifolds. We refer to \cite{Woess} for background results and definitions concerning random walks on graphs.
\medskip

Suppose that $(G_i)$ is a sequence of finite graphs with uniformly bounded valence. Denote by $V(G_i)$ the set of vertices of $G_i$. We endow $G_i$ with the interior distance with respect to which each edge has length $1$. For each $i$ we consider the map
$$V(G_i)\to\CH,\ \ v\mapsto(G_i,v)$$
where, as all along, $\CH$ is the space of all pointed metric spaces with respect to the Gromov-Hausdorff topology. On $V(G_i)$ we have the probability measure which gives equal weight to each vertex. Pushing forward these measures we get a sequence $(\lambda_i)$ of measures on $\CH$. We say that a measure $\lambda$ is the {\em distributional limit} of $(G_i)$ if the sequence $(\lambda_i)$ converges to $\lambda$ in the weak-*-topology. See \cite{BS} for details.

\subsection{Parabolicity}
Suppose now that $G$ is an infinite graph and let $E(G)$ be the set of its edges. We say that $G$ is \emph{$p$-parabolic} if for every non-empty finite set $S\subset V(G)$ 
\begin{equation}
\label{eq:para_graph}
\mathrm{cap}_p(S,G) := \inf_u \sum_{q\in E(G)} |\DD u(q)|^p = 0,
\end{equation}
where the infimum is taken over all finitely supported function $u\colon G\to \BR$ so that $u\ge 1$ on $S$ and
\[ | \DD u(q) | = \left( \sum_{(q,q')\in E(G)} (u(q') - u(q))^2 \right)^{1/2}; \]
we refer to \cite[Section 5]{Holopainen} for the definition and terminology. 

As in the case of Riemannian manifolds, we may introduce the $p$-modulus $\Mod_p(\Gamma)$ of a family $\Gamma$ of self-avoiding paths on $G$:
\[
\Mod_p(\Gamma) = \inf_\rho \sum_{v\in G} \rho(v)^p
\]
where $\rho$ is a non-negative function on $G$ so that 
\[
\sum_{v\in \gamma} \rho(v) \ge 1
\]
for every $\gamma\in \Gamma$.

It is now easy to prove \cite{Woess} that $\mathrm{cap}_p(S,G) = \Mod_{p}(\Gamma(S))$, where $\Gamma(S)$ is the set of self avoiding infinite paths in $G$ meeting $S$. 

\begin{bem}
Since $d-\mathrm{VEL}(\Gamma(v)) = \Mod_p(\Gamma(v))^{-1}$, where $p-\mathrm{VEL}(\Gamma(v))$ is  the \emph{$p$-vertex extremal length of $\Gamma(v)$} in \cite{BC}, we have that the definition of $p$-parabolicity in \cite{BC} coincides with the definition above.
\end{bem}

The following fact is immediate from either the modulus or the capacity definitions.
\medskip

\noindent{\bf Fact.} Every subgraph of a $p$-parabolic graph is $p$-parabolic.
\medskip

Notice at this point that the same argument as we used in the proof of Theorem \ref{cheeger} shows that $p$-parabolicity for some $p$ of an infinite graph $G$ of bounded valence implies that the Cheeger constant $h(G)$ vanishes (compare also with the argument in \cite[Corollary 4.2]{BC}). Recall that the Cheeger constant of an infinite graph $G$ is the infimum of $\frac{\vert\D A\vert}{\vert A\vert}$ over of finite sets of vertices $A\subset V(G)$ where $\D A\subset E(G)$ is the set of edges separating $A$ from its complement $V(G)\setminus A$. 

It also well-known that 2-parabolicity of a graph $G$ of bounded valence is equivalent to the recurrence of the simple random walk on $G$ \cite{Woess}. We record these facts for later use:
\medskip

\noindent{\bf Fact.} Let $G$ be an infinite graph of bounded valence. If $G$ is $p$-parabolic for some $p$ then $h(G)=0$. If $G$ is 2-parabolic then the simple random walk on $G$ is recurrent.
\medskip

We are now ready to record the invariance of $p$-parabolicity under quasi-isometries. Recall that a map $f:Y\to Z$ between metric spaces is a \emph{quasi-isometry} if there exists $c>0$ so that 
$$c^{-1} d_Y(x,y) -c \le d_Z(f(x),f(y)) \le c d_Y(x,y) +c$$
for all $x,y \in Y$. Two metric space are said to be {\em quasi-isometric} if there is a quasi-isometry whose image is $r$-dense for some $r<\infty$.
 
\begin{sat*}[Kanai-Holopainen]
Let $G$ be a graph of uniformly bounded valence which is quasi-isometric to a Riemannian manifold $M$ with bounded sectional curvature and positive injectivity radius. Then $G$ is $p$-parabolic if and only if $M$ is $p$-parabolic.
\end{sat*}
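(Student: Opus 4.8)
The plan is to follow Kanai's method of combinatorial approximation. Bounded sectional curvature together with a positive lower bound on the injectivity radius give, via the Bishop--Gromov and Rauch comparison theorems, uniform control of the local geometry of $M$: there are $\epsilon\in(0,1)$ and $C\ge 1$ with $C^{-1}\le\vol_M(B(x,\epsilon,M))\le\vol_M(B(x,1,M))\le C$ for every $x\in M$, and every ball of radius $1$ contains at most $C$ points of any $\epsilon$-net. Fix a maximal $\epsilon$-net $\CN\subset M$ and let $G_M$ be the graph with vertex set $\CN$ and an edge between distinct $v,w$ whenever $d_M(v,w)\le 3\epsilon$. Then $G_M$ has uniformly bounded valence and the inclusion $\CN\hookrightarrow M$ is a quasi-isometry, so $G$ and $G_M$ are quasi-isometric graphs of uniformly bounded valence. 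The theorem then follows from two assertions: \textbf{(i)} $p$-parabolicity is invariant under quasi-isometry within the class of graphs of uniformly bounded valence, and \textbf{(ii)} $M$ is $p$-parabolic if and only if $G_M$ is.

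For \textbf{(i)} I would argue directly with the discrete $p$-modulus. If $\phi\colon G_1\to G_2$ is a quasi-isometry between bounded-valence graphs with $r$-dense image, then combinatorial distances are distorted linearly, the fibres of $\phi$ and the $r$-balls of $G_2$ have uniformly bounded cardinality, and $\phi$ sends adjacent vertices to vertices at bounded combinatorial distance. Given an admissible density $\rho$ for the family $\Gamma(S)$ of escaping self-avoiding paths meeting a finite set $S\subset V(G_1)$, a suitable normalisation of $v\mapsto\sum_{w\,:\,d_{G_2}(\phi(w),v)\le r}\rho(w)$ is admissible for $\Gamma(\phi(S))$ in $G_2$ (an escaping path in $G_2$ is shadowed, through the coarsely dense image, by an escaping path in $G_1$ meeting $S$), and bounded overlap gives $\Mod_p(\Gamma(\phi(S)))\le C'\Mod_p(\Gamma(S))$; the reverse inequality is symmetric. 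Hence $\mathrm{cap}_p(S,G_1)=0$ precisely when $\mathrm{cap}_p(\phi(S),G_2)=0$. This is essentially the rough-isometry invariance of network parabolicity due to Kanai and Soardi.

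For \textbf{(ii)} I would compare the conformal $p$-modulus $\Mod_p(\Gamma_\infty(M))$ of the family of locally rectifiable paths leaving every compact subset of $M$ with the discrete $p$-modulus $\Mod_p(\Gamma_\infty(G_M))$ of escaping paths in $G_M$, and then invoke the characterisation $\Mod_p(\Gamma_\infty(M))=0\iff M\text{ is }p\text{-parabolic}$ recalled above from \cite{HP}, together with $\mathrm{cap}_p(\cdot,G_M)=\Mod_p(\Gamma(\cdot))$ on the graph side. The density transfer goes both ways using only bounded geometry. From $M$ to $G_M$: given Borel $\rho$ with $\int_M\rho^p\,\vol_M$ small, set $\bar\rho(v)=\bigl(\int_{B(v,4\epsilon,M)}\rho^p\,\vol_M\bigr)^{1/p}$; an escaping path in $G_M$ shadows an escaping locally rectifiable path in $M$, so Hölder gives admissibility up to a constant, while $\vol_M(B(v,4\epsilon,M))$ comparable to $1$ and bounded overlap give $\sum_v\bar\rho(v)^p\lesssim\int_M\rho^p\,\vol_M$. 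From $G_M$ to $M$: given finitely supported $\bar\rho\ge 0$ on $\CN$ with $\sum_v\bar\rho(v)^p$ small, set $\rho(x)=\max\{\bar\rho(v):d_M(x,v)\le 4\epsilon\}$; a locally rectifiable escaping curve $\gamma$ in $M$ passes, in order, through $\epsilon$-balls about a sequence of net points forming an escaping path in $G_M$, and because $\CN$ is an $\epsilon$-net the sub-arc of $\gamma$ joining consecutive visited net points has length at least $\epsilon$, so $\int_\gamma\rho\,\ds$ dominates a fixed multiple of the corresponding sum of $\bar\rho$; Bishop--Gromov again yields $\int_M\rho^p\,\vol_M\lesssim\sum_v\bar\rho(v)^p$. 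Combining, $\Mod_p(\Gamma_\infty(M))=0$ iff $\Mod_p(\Gamma_\infty(G_M))=0$, which is \textbf{(ii)}.

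The main obstacle I expect is the $G_M\to M$ direction of the density transfer in \textbf{(ii)}: one must produce a \emph{Borel} density on $M$ admissible for \emph{every} locally rectifiable escaping curve, so the argument that such a curve is shadowed by an escaping graph path and cannot shuttle between net points at unbounded combinatorial distance without accumulating definite length has to be made carefully, and the bound $\int_M\rho^p\,\vol_M\lesssim\sum_v\bar\rho(v)^p$ needs a Vitali/bounded-overlap estimate resting on the uniform volume comparison. A secondary point is to verify that the quantitative bounded-geometry constants ($\epsilon$, $C$, the valence bound of $G_M$) depend only on the curvature bound and the injectivity-radius lower bound, which is precisely the input already used throughout the paper.
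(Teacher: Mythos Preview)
The paper does not prove this statement at all: it is quoted as a black box and attributed to Kanai \cite{Kanai} for $p=2$ and to Holopainen \cite[Lemma~5.9]{Holopainen} for general $p$. So there is no ``paper's own proof'' to compare against; your proposal is in effect a sketch of the proofs in those references.

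That said, your outline is the standard Kanai approach and is essentially correct in structure: discretise $M$ by a bounded-geometry net, compare $\Mod_p(\Gamma_\infty(M))$ with the discrete $p$-modulus on the net graph, and use rough-isometry invariance of graph $p$-parabolicity to pass to $G$. The one place where your sketch is a bit loose is exactly the spot you flag yourself, the $G_M\to M$ density transfer. Your proposed $\rho(x)=\max\{\bar\rho(v):d_M(x,v)\le 4\epsilon\}$ together with the claim that consecutive visited net points contribute arc-length $\ge\epsilon$ does not quite give admissibility as stated: a locally rectifiable escaping curve can move between $\epsilon$-balls of \emph{adjacent} net points along arbitrarily short sub-arcs, so the naive ``length $\ge\epsilon$ per edge'' count fails. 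The fix used by Kanai and Holopainen is to work with capacity rather than modulus directly: smooth an admissible graph function $u\colon\CN\to\BR$ to $\tilde u(x)=\sum_{v\in\CN}u(v)\,\psi_v(x)$ via a bounded-geometry partition of unity $\{\psi_v\}$ subordinate to $\{B(v,2\epsilon,M)\}$, and then estimate $\int_M|\nabla\tilde u|^p\,\vol_M$ by $\sum_v|\nabla u(v)|^p$ using the uniform volume and overlap bounds. This avoids the path-shadowing issue entirely. With that adjustment your plan goes through and reproduces the cited argument.
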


Kanai proved this theorem in \cite{Kanai} for $p=2$; the extension to $p\neq 2$ is due to Holopainen \cite[Lemma 5.9]{Holopainen}. It should be noticed that these authors prove a more general form of the above theorem: instead of a bound on the sectional curvature, they merely assume that $M$ has Ricci curvature bounded from below.

\subsection{From graphs to metrics}
In this section we adapt an argument due to Benjamini and Schramm allowing to extend  graphs in surface to triangulations in a controlled way (compare with the proof of \cite[Theorem 1.1]{BS}). Then, we describe how to associate a Riemannian metric to every triangulation.

\begin{lem}\label{fill}
Let $G$ be a graph of valence $d$ embedded in a closed surface $M$ and assume that $g(G)=g(M)$. There is a triangulation $T$ of $M$ containing $G$ and satisfying:
\begin{itemize}
\item Every vertex of $T$ is already a vertex of $G$.
\item $T$ has at most valence $6d$.
\end{itemize}
\end{lem}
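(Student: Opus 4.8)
The plan is to build the triangulation $T$ greedily, face by face, starting from the embedding of $G$ in $M$, and to keep track at each stage of how much the valence of each vertex can grow. First I would pass to the CW-structure on $M$ determined by the embedded graph $G$: since $g(G)=g(M)$, the complementary regions $M\setminus G$ are a disjoint union of open discs $D_1,\dots,D_m$ (this is precisely where the genus hypothesis enters — it guarantees the embedding is cellular, so that no complementary region has nontrivial topology). Each $\partial D_j$ is a closed edge-path in $G$, possibly with repetitions of vertices and edges; let $n_j$ be its combinatorial length. My task then reduces to triangulating each closed disc $\overline{D_j}$ \emph{without adding new vertices}, i.e.\ using only the vertices already lying on $\partial D_j$, and then to bound the total valence increase.

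The key step is therefore a purely planar statement: a closed disc whose boundary is a cycle (in the loose sense, allowing repeated vertices) of combinatorial length $n\ge 3$ can be triangulated using only the $n$ boundary vertices, in such a way that each boundary vertex receives at most a bounded number of new incident edges — in fact, I would aim for the bound that each vertex of the boundary path gains at most, say, $n-3$ new edges from that one disc, which is the standard count for a fan/ear triangulation of an $n$-gon. One has to be slightly careful because boundary vertices may repeat, but one can always add a chord (a diagonal inside $D_j$) joining two vertices that are non-adjacent along $\partial D_j$ and split off a triangle (an ``ear''); iterating, the disc is exhausted by triangles. A cleaner route, and the one I would actually write, is: pick any vertex $v$ on $\partial D_j$ and cone the whole boundary path to $v$ by inserting $n_j-2$ diagonals from $v$; this triangulates $\overline{D_j}$, adds $n_j-3$ edges at $v$ and exactly one edge at each of the other boundary vertices — except one must allow a multi-edge or a slight subdivision if $\partial D_j$ is not embedded, but since we are producing an \emph{abstract} triangulation of $M$ this causes no real trouble, or alternatively one first barycentrically refines the bad faces. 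I expect this planar bookkeeping — handling non-embedded boundary cycles and degenerate small faces (length $1$ or $2$) — to be the main technical obstacle, though it is entirely elementary.

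Finally I would assemble the valence bound. A vertex $v$ of $G$ has at most $d$ incident edges in $G$, hence lies on the boundary of at most $d$ of the discs $D_j$ (each corner of $G$ at $v$ borders one face, and there are at most $d$ corners; if $\partial D_j$ passes through $v$ several times one counts each passage, but the total number of passages of all boundary cycles through $v$ equals the number of corners at $v$, which is at most $d$). From each such face $v$ acquires at most a controlled number of new edges: if I use the coning construction, a single designated ``apex'' vertex of a face $D_j$ could in principle acquire up to $n_j-3$ edges, which is not uniformly bounded, so instead I would triangulate each $D_j$ by repeated ``ear-cutting'' arranged so that no vertex is used as an apex more than a bounded number of times — concretely, triangulate the $n_j$-gon so that every vertex has degree at most (a constant, e.g.\ this is always possible for a convex polygon with degrees at most $O(\log n)$; for a uniform constant one subdivides once). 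The cleanest uniform statement: after one barycentric-type subdivision of the oversized faces, every face has bounded combinatorial length, and then ear-cutting adds at most $O(1)$ edges per vertex per face. Summing over the at most $d$ faces through $v$, and over the at most $d$ edges of $G$ at $v$, gives that the valence of $v$ in $T$ is at most $6d$ after choosing the constants appropriately; since no new vertices were introduced, both bulleted conclusions hold, and $G\subset T$ by construction. This completes the proof.
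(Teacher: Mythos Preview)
Your overall architecture is right --- triangulate each complementary face without adding vertices, bound the number of new diagonals each vertex acquires from each incident face, then sum over the at most $d$ incident faces --- but the heart of the argument, the triangulation of a single face, never actually closes. You propose three schemes and each one fails for the lemma as stated. The fan (coning from one boundary vertex) gives the apex degree $n_j-1$, which is unbounded. Balanced ear-cutting gives $O(\log n_j)$ per vertex, again unbounded; summed over $d$ faces this yields no bound of the form $C\,d$. And your fallback, ``one barycentric-type subdivision of the oversized faces,'' introduces new vertices, directly violating the first bullet of the lemma. So as written you have not produced a triangulation $T$ with $V(T)=V(G)$ and valence $\le C\,d$ for any constant $C$, let alone $6$.

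What you are missing is that an $n$-gon can always be triangulated, without new vertices, so that every vertex has degree at most $4$ (equivalently, at most two diagonals). The zig-zag triangulation does this: take diagonals $v_2v_n,\ v_3v_n,\ v_3v_{n-1},\ v_4v_{n-1},\ldots$, alternately advancing on the two sides; each vertex is an endpoint of at most two diagonals. This is what the paper uses. With this in hand the count is immediate: a vertex of $G'$ lies in at most $2d$ face-corners, and from each it gains at most two new edges, so the final valence is at most $2d + 2\cdot 2d = 6d$.

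A secondary point: your claim that $g(G)=g(M)$ forces the embedding to be cellular is only correct for connected $G$ (Youngs' theorem); for disconnected $G$ one can have annular faces --- e.g.\ two disjoint triangles in $\mathbb{S}^2$. The paper covers this by first adding one arc across each annulus to obtain a graph $G'\supset G$ of valence at most $2d$ whose complement consists only of disks, and then triangulating. You should either assume connectedness explicitly or insert this preliminary step.
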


\begin{proof}
Every component of $M\setminus G$ is either a disk or an annulus because $g(G)=g(M)$. For every annulus in $M\setminus G$ take an arc with endpoints in vertices of $G$ and add it to $G$. Let $G'$ be the so obtained graph and notice that because every vertex of $G$ is in the closure of at most $d$ components of $M\setminus G$ the graph $G'$ has at most valence $2d$. Now, every component of $M\setminus G'$ is a disk. Every polygon can be triangulated with at most valence $4$ and without adding vertices. When doing this for every component of $M\setminus G'$ we obtain a triangulation $T$ of $M$ with no new vertices, containing $G'$ and hence $G$, and with at most valence $6d$.
\end{proof}

Suppose now that $T$ is a triangulation of at most valence $d$ of a surface $M$. Identifying every face of $T$, i.e. every component of $M\setminus T$, with an euclidean equilateral triangle of side-length $1$ we obtain a piecewise euclidean metric on $M$ which is in fact Riemannian outside of the vertices of $T$. Notice that the balls of radius $\frac 13$ around any two vertices of $T$ are disjoint and that since $T$ has valence $\le d$ there are at most $d$ isometry classes of such balls. Choose once and for ever a procedure to smooth out the singularity for each one of those $d$ models and apply the corresponding procedure to every ball of radius $\frac 13$ around a vertex of $T$. When doing so one obtains a smooth Riemannian metric on $M$ which has curvature pinched by $\kappa(d)$, injectivity radius bounded from below by $\epsilon(d)$, and which is $c(d)$ quasi-isometric to $T$. Scaling by a bounded amount we have:

\begin{lem}\label{trian-metric}
For every $d$ there is $L$ such that for every triangulations $T$ with valence $\le d$ of a surface $M$ there is a Riemannian metric on $M$ such that:
\begin{itemize}
\item $\vert\kappa_M\vert\le 1$ and $\inj(M)\ge 1$.
\item There is an $L$-bilipschitz embedding $T\hookrightarrow M$ with $L$-dense image.\qed
\end{itemize}
\end{lem}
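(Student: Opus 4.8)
The plan is to turn the construction sketched just before the statement into a precise one and then to verify the curvature bound, the injectivity radius bound and the bilipschitz claim separately; only the injectivity radius bound is more than bookkeeping. Given a triangulation $T$ of valence $\le d$ of a closed surface $M$, first identify each $2$-face with a Euclidean equilateral triangle of side $1$. This yields a length metric $g_{\mathrm{pf}}$ on $M$ that is flat and smooth away from the vertex set $V(T)$ and is, near each $v\in V(T)$, a Euclidean cone of angle $\theta(v)=\tfrac{\pi}{3}\deg_T(v)$. Since $\deg_T(v)\le d$, only finitely many cone angles occur; the balls $B(v,\tfrac13)$, $v\in V(T)$, are pairwise disjoint; and distinct vertices are at $g_{\mathrm{pf}}$-distance $\ge\tfrac13$. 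For each of the finitely many cone angles fix, once and for all, a smooth rotationally symmetric disk metric agreeing with the cone of that angle outside the ball of radius $\tfrac13$ and smooth at the centre, and graft these in over the balls $B(v,\tfrac13)$ to obtain a smooth metric $g_T$ on $M$. Then $g_T$ is flat outside $\bigcup_v B(v,\tfrac13)$ and on each such ball coincides with one of finitely many fixed metrics, so $|\kappa_{g_T}|\le\kappa(d)$; the same finiteness shows $g_T$ is $c_1(d)$-bilipschitz to $g_{\mathrm{pf}}$.

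Next I would check the bilipschitz statement. The identity of the $1$-skeleton, regarded as a map $(T,\text{graph metric})\to(M,g_{\mathrm{pf}})$, is $c_2(d)$-bilipschitz onto its image: distances can only decrease (a geodesic may cut across triangles), while any path of $g_{\mathrm{pf}}$-length $\lambda$ can be replaced by an edge path of length $\le c_2(d)\lambda$, the factor coming from the fact that a short arc passing near a vertex of degree up to $d$ may clip up to $d$ corners. Composing with the bilipschitz map $g_{\mathrm{pf}}\to g_T$ gives a $c_3(d)$-bilipschitz embedding $T\hookrightarrow(M,g_T)$; its image is $c_4(d)$-dense since every point of a unit triangle lies within distance $1$ of a vertex. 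This is also the quasi-isometry referred to in the surrounding text.

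For the injectivity radius, the conjugate radius of $g_T$ is at least $\pi/\sqrt{\kappa(d)}$ by the curvature bound, so it suffices to bound below the length of a shortest geodesic loop. A geodesic loop of length $\ell$ lies in the ball of radius $\ell/2$ about its basepoint, and I would argue that for $\ell$ below a constant $\delta(d)$ --- comparable, through the factor $c_1(d)$, to the inradius $\tfrac{1}{2\sqrt3}$ of a unit equilateral triangle, which bounds below the ``disk radius'' of $g_{\mathrm{pf}}$ --- this ball is an embedded disk $D$ with $|\kappa|\le\kappa(d)$. Passing if necessary to an innermost simple sub-loop, such a loop bounds a disk region $D'$, and Gauss--Bonnet gives $\int_{D'}\kappa_{g_T}\,\vol_{g_T}=2\pi-(\text{exterior angle at the basepoint})\ge\pi$, while the Bishop comparison gives $\int_{D'}\kappa_{g_T}\,\vol_{g_T}\le\kappa(d)\,\vol_{g_T}(D')\le C\kappa(d)(\ell/2)^2$; hence $\ell$ is bounded below in terms of $d$ only, so $\inj(M,g_T)\ge\epsilon(d)>0$. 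Finally, rescaling $g_T$ by the bounded factor $t(d)=\max(\sqrt{\kappa(d)},1/\epsilon(d))$ produces a metric with $|\kappa|\le1$ and $\inj\ge1$; the bilipschitz and density constants of $T\hookrightarrow M$ are merely multiplied by $t(d)$, so $L=t(d)c_3(d)$, enlarged to cover density, works and depends only on $d$.

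The step I expect to require real care is the injectivity radius bound: both seeing that a short geodesic loop must be trapped inside an embedded disk --- this is exactly where the valence bound enters, through the uniform separation of the cone points and the finiteness of the surgery models --- and running the Gauss--Bonnet estimate cleanly for loops that need not be simple. The curvature bound from the fixed smoothing profiles and the bilipschitz/density bookkeeping are routine.
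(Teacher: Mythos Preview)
Your proposal is correct and follows exactly the construction the paper sketches in the paragraph preceding the lemma (the paper's ``proof'' is that paragraph together with the \qedsymbol{} in the statement): build the piecewise-flat metric from unit equilateral triangles, smooth the finitely many cone models inside disjoint $\tfrac13$-balls around the vertices, read off the uniform curvature and bilipschitz bounds from the finiteness of the models, and rescale. The only substantive addition you make is the Gauss--Bonnet argument for the injectivity radius, which the paper simply asserts; your outline of that step is sound, and your caveat that trapping a short loop in an embedded disk is the place requiring care is exactly right.
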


A similar construction of a metric associated to a triangulation was used by Gill-Rohde in \cite{Gill-Rohde}.

\subsection{Sequences of graphs with sublinear genus growth}
We are now ready to prove Theorem \ref{genusgbs} and Corollary \ref{expanders}:

\begin{named}{Theorem \ref{genusgbs}}
Let $(G_i)$ be a sequence of graphs with uniformly bounded valence, with $\vert G_i\vert\to\infty$, and with sublinear genus growth. If $(G_i)$ converges in distribution to $\lambda$, then $\lambda$ is supported by the set of 2-parabolic rooted graphs $(G,p)$. In particular, $\lambda$ is supported by rooted graphs with vanishing Cheeger constant $h(G)=0$ and recurrent simple random walk.
%
%
\end{named}

\begin{proof}
For each $i$ consider a surface $M_i$ with genus $g(M_i)=g(G_i)$ and fix an embedding $G_i\hookrightarrow M_i$. Let also $T_i$ be the triangulations provided by Lemma \ref{fill} and notice that since the graphs $G_i$ have uniformly bounded valence, the same is true for the triangulations $T_i$.  Recall also the $T_i$ and $G_i$ have the same set of vertices $V(G_i)=V(T_i)$. For each $i$ let $\rho_i$ be the Riemannian metric provided by Lemma \ref{trian-metric} and write from now on $M_i=(M_i,\rho_i)$. 
\medskip

\noindent{\bf Claim.} There are subsets $V_i\subset V(G_i)$ with $\lim_{i\to\infty}\frac{\vert V_i\vert}{\vert G_i\vert}=1$ and such every Gromov-Hausdorff limit $(X,x)$ of a subsequence of a sequence $(M_i,p_i)$ with $p_i\in V_i$ is such that $X$ is conformally equivalent to $\BC$ or to $\BC^*$. 

\begin{proof}[Proof of the Claim]
Let $L$ be the constant in Lemma \ref{trian-metric} and observe that the subset $V(G_i)=V(T_i)$ of $M_i$ is $\frac 1L$-separated and $2L$-dense. In particular, there is some $c>0$ independent of $i$ with 
$$c\vert G_i\vert\ge \vol_{M_i}(M_i)\ge c^{-1}\vert G_i\vert$$
implying that the sequence $(M_i)$ satisfies the conditions in Theorem \ref{genusgbs}. Choose for all $i$ a 1-net $\CN_i$ and recall that there is $c'>0$ with 
$$c'\vert\CN_i\vert\ge \vol_{M_i}(M_i)\ge (c')^{-1}\vert\CN_i\vert.$$
As remarked after the proof of Theorem \ref{sat12} there is a sequence $(U_i)$ of subsets $U_i\subset\CN_i$, with $\lim_{i\to\infty}\frac{\vert U_i\vert}{\vert\CN_i\vert}=1$ and such that any Gromov-Hausdorff limit $(X,x)$ of a subsequence of $(M_i,x_i)$ with $x_i\in U_i$ is such that $X$ is conformally equivalent to either $\BC$ or $\BC^*$. This implies that if $(p_i)$ is any sequence with $p_i\in V(G_i)$ such that $d_{M_i}(p_i,U_i)\le 1$, then again any Gromov-Hausdorff limit $(X,x)$ of a subsequence of $(M_i,p_i)$ is conformally equivalent to either $\BC$ or $\BC^*$; set 
$$V_i=\{p\in V(G_i)\vert d_{M_i}(p,U_i)\le 1\}.$$
Notice now that there is some $c''>0$ depending only on $L$ and the fact that $\vert\kappa_{M_i}\vert\le 1$ asserting that every ball $B(x,1,M_i)$ contains at most $c''$ elements of the $\frac 1{L}$-separated set $V(G_i)$. It follows that
$$\vert V(G_i)\setminus V_i\vert\le c''\vert\CN_i\setminus U_i\vert$$
This shows that 
$$\frac{\vert V(G_i)\setminus V_i\vert}{\vert G_i\vert}\le cc'c''\frac{\vert\CN_i\setminus U_i\vert}{\vert\CN_i\vert}\to 0$$
as $i\to\infty$. This concludes the proof of the claim.
\end{proof}

Notice now that the claim proves that the distributional limit $\lambda$ of the sequence $(G_i)$ is supported by graphs $(G,p)$ which arise as Gromov-Hausdorff limits of sequences $(G_i,p_i)$ with $p_i\in V_i$. We show that any such $G$ satisfies the claims of Theorem \ref{genusgbs}. To begin with observe that up to passing to a subsequence we may assume that both the pointed triangulations $(T_i,p_i)$ and the pointed surfaces $(M_i,p_i)$ converge in the Gromov-Hausdorff topology to $(T,q)$ and $(X,x)$ respectively; by the claim $X$ is conformally equivalent to either $\BC$ or $\BC^*$. We can then pass again to a subsequence an assume that the embeddings $(G_i,p_i)\hookrightarrow(T_i,p_i)$ converge to an embedding $(G,p)\hookrightarrow(T,q)$. Similarly, we can assume that the $L$-bilipschitz embeddings $(T_i,p_i)\hookrightarrow(M_i,p_i)$ converge to an $L$-bilipschitz embedding $(T,q)\hookrightarrow(X,x)$. Moreover, since each one of the $T_i$ is $L$-dense in $M_i$ we obtain that $T$ is also $L$-dense in $X$. It follows that $T$ and $X$ are quasi-isometric.

Since $X$ is conformally equivalent to either $\BC$ or to $\BC^*$, it is 2-parabolic. In particular, the Kanai-Holopainen theorem implies that $T$ is also 2-parabolic. This implies that $G$ is also 2-parabolic because as we remarked above every subgraph of a 2-parabolic graph is 2-parabolic.  Once we know that $G$ is 2-parabolic, it follows that $h(G)=0$ and that the simple random walk on $G$ is recurrent. We have proved Theorem \ref{genusgbs}.
\end{proof}

All that remains to be done is to prove Corollary \ref{expanders}. This is done in very same way as to prove Theorem \ref{cheeger}.

\begin{named}{Corollary \ref{expanders}}
For every expander $(G_i)$ there is a positive constant $c>0$ with $g(G_i)\ge c\vert G_i\vert$ for all $i$.
\end{named}
\begin{proof}
Recall that an {\em expander} is a sequence $(G_i)$ of graphs with uniformly bounded valence such that $\vert G_i\vert\to\infty$ but such that there is $\epsilon$ positive with $h(G_i)>\epsilon$ for all $i$. Seeking a contradiction suppose that the sequence $(G_i)$ has sublinear genus growth. Up to passing to a subsequence we may assume that $(G_i)$ has some distributional limit $\lambda$. It follows hence from Theorem \ref{genusgbs} that there are base points $p_i\in G_i$ such that $(G_i,p_i)$ converges in the Gromov-Hausdorff topology to a graph $(G,p)$ with $h(G)=0$. This implies that there is $A\subset V(G)$ finite with 
$$\vert\D A\vert\le\frac\epsilon 2\vert A\vert$$
The Gromov-Hausdorff convergence of $(G_i,p_i)\to(G,p)$ implies that for all large $i$ there is an embedding $A\hookrightarrow G_i$ such that the sets of those points in $G$ and $G_i$ which are at distance $1$ from $A$ are isomorphic as graphs. In particular we obtain that $h(G_i)\le\frac\epsilon2$ for all $i$ large enough. This contradiction concludes the proof of Corollary \ref{expanders}.
\end{proof}

\bigskip

\noindent Department of Mathematics, University of Texas, Austin.
\newline \noindent
\texttt{hossein@math.utexas.edu}

\bigskip

\noindent Matematiikan ja tilastotieteen laitos, Helsingin yliopisto, Helsinki \emph{and} \newline Matematiikan ja tilastotieteen laitos, Jyv\"askyl\"an yliopisto, Jyv\"askyl\"a.
\newline \noindent
\texttt{pekka.pankka@jyu.fi}

\bigskip

\noindent Department of Mathematics, University of British Columbia, Vancouver.
\newline \noindent
\texttt{jsouto@math.ubc.ca}

\end{document}